\def\be{\begin{equation}}
\def\ee{\end{equation}}
\def\bea{\begin{eqnarray}}
\def\eea{\end{eqnarray}}
\def\bt{\begin{theorem}}
\def\et{\end{theorem}}
\def\bl{\begin{lemma}}
\def\el{\end{lemma}}
\def\br{\begin{remark}}
\def\er{\end{remark}}
\def\bc{\begin{corollary}}
\def\ec{\end{corollary}}
\def\bd{\begin{definition}}
\def\ed{\end{definition}}
\def\bp{\begin{proposition}}
\def\ep{\end{proposition}}
\newtheorem{theorem}{Theorem}[section]
\newtheorem{lemma}{Lemma}[section]
\newtheorem{remark}{Remark}[section]
\newtheorem{proposition}{Proposition}[section]
\newtheorem{corollary}{Corollary}[section]
\newtheorem{definition}{Definition}[section]
\begin{document}

\title{On the polar Orlicz-Minkowski problems and the $p$-capacitary Orlicz-Petty bodies\footnote{Keywords:  Minkowski problems, Orlicz-Brunn-Minkowski theory, Orlicz-Minkowski problems, $p$-capacity, Orlicz-Petty bodies, variational functionals.} }

\author{Xiaokang Luo, Deping Ye and Baocheng Zhu}
\date{}
\maketitle

\begin{abstract}
In this paper, we propose and study the polar Orlicz-Minkowski problems:  under what conditions on a nonzero finite measure $\mu$ and a continuous function $\varphi:(0,\infty)\rightarrow(0,\infty)$, there exists a convex body $K\in\mathcal{K}_0$ such that
$K$ is an optimizer of the following optimization problems:
\begin{equation*}
\inf/\sup \bigg\{\int_{S^{n-1}}\varphi\big( h_L \big) \,d \mu: L \in \mathcal{K}_{0} \ \text{and}\   |L^\circ|=\omega_{n}\bigg\}.
\end{equation*} The solvability of the polar Orlicz-Minkowski problems is discussed under different conditions. In particular, under certain conditions on $\varphi,$ the existence of a solution is proved for a nonzero finite measure $\mu$ on $S^{n-1}$ which is not concentrated on any hemisphere of $S^{n-1}.$ Another part of this paper deals with the $p$-capacitary Orlicz-Petty bodies.   In particular,  the existence of the $p$-capacitary Orlicz-Petty bodies is established and the continuity of the $p$-capacitary Orlicz-Petty bodies is proved.  
 \vskip 2mm
\noindent  2010 Mathematics Subject Classification: 52A20, 52A38, 52A39, 52A40, 53A15.
 
\end{abstract}

\section{Introduction}
Let $\mathcal{K}_0$ be the set of all convex bodies in $\mathbb{R}^n$ with the origin $o$ in their interiors, i.e., $K\in\mathcal{K}_0$ is a convex compact subset of $\mathbb{R}^n$ such that $o\in \text{int}K,$ the interior of $K.$ For  $0\neq q\in \mathbb{R},$ the $L_q$ mixed volume of $K,L\in\mathcal{K}_0$ (see e.g. \cite{Lutwak1993,Ye2015}) can be defined as
\begin{equation}\label{1.4}
V_q(K,L)=\displaystyle{\frac{1}{n}\int_{S^{n-1}}\Big(\frac{h_L(u)}{h_K(u)}\Big)^qh_K(u)\,dS(K,u)},
\end{equation} where $S(K,\cdot)$ is the surface area measure of $K$ (see e.g. \cite{Aleksandrov1937,FJ1938}) and $h_L$ is the support function of $L$ defined on $S^{n-1},$ the unit sphere of $\mathbb{R}^n$ (see Section \ref{section:2} for more details on the notations). When $q=1$, one gets the classical mixed volume (see e.g. \cite{Gruber2007,Schn2014}).  The classical and $L_q$ Minkowski inequalities \cite{Gardner2002,Gruber2007, Lutwak1993,Schn2014} lie at the heart of the rapidly developing $L_q$ Brunn-Minkowski theory of convex bodies. These inequalities read:  for all $q\geq1$ and all $K,L\in\mathcal{K}_0,$
\begin{equation}\label{1.5}
V_q(K,L)\geq |K|^{\frac{n-q}{n}}|L|^{\frac{q}{n}};
\end{equation}
 equality holds if and only if $K$ is dilate of $L$ for $q>1$ and  $K$ is homothetic to $L$ for $q=1$. Hereafter $|K|$ refers to the volume of $K\in\mathcal{K}_0$ and $\omega_n=|B^n_2|$ denotes the volume of the unit ball $B^n_2$ in $\mathbb{R}^n.$ The $L_q$ Minkowski inequality (\ref{1.5}) implies that, for any fixed $K\in\mathcal{K}_0$ and $q\geq 1,$ the following optimization problem
\begin{equation}\label{1.6}
\inf\bigg\{\int_{S^{n-1}}\Big(\frac{h_L(u)}{h_K(u)}\Big)^qh_K(u)\,dS(K,u): L\in\mathcal{K}_0\ \ \text{and}\ \ |L|=\omega_n\bigg\}
\end{equation}
has a unique solution for $q>1$ and a unique solution up to translation for $q=1$.  

Related to (\ref{1.6}) is the celebrating $L_q$ Minkowski problem: {\em
for $0\neq q\in\mathbb{R}$, under what conditions on a given nonzero finite measure $\mu$ defined on $S^{n-1},$ there is a convex body $K$ (ideally with the origin $o$ in its interior) such that
$h^{q-1}_K d\mu=dS(K,\cdot)$?} 
 The $L_q$ Minkowski problem is a popular problem in geometry and has attracted considerable attention
(see e.g. \cite{Chen2006,CW2006,HMS2004,HLYZ2005,Lutwak1993,EDG2004,VUmanskiy2003,GZhu2015I,GZhu2015II,GZhupreprint}). Solutions to the $L_q$ Minkowski problems have fundamental applications in, for instance, establishing the $L_q$ Sobolev type inequalities (see e.g. \cite{CLYZ2009,HS2009II,EDG2002,GZhang1999}).
 When $q=1$,  the $L_q$ Minkowski problem becomes the classical Minkowski problem.  It has been proved in, e.g. \cite{Minkowski1987, Minkowski1903}, that there exists a unique convex body $K$ (up to a translation) such that $dS(K,\cdot)=d\mu,$ if $\mu$ is not concentrated on any hemisphere of $S^{n-1}$ and has the centroid at the origin. In fact, to solve the classical Minkowski problem is equivalent to find an optimizer of the following optimization problem:
\begin{equation}\label{1.2}
\inf\bigg\{\frac{1}{n}\int_{S^{n-1}}h_L\,d\mu:\ L\in\mathcal{K}_0\ \ \text{and}\ \ |L|=\omega_n\bigg\},
\end{equation}
which can be obtained from  (\ref{1.6}) with $S(K,\cdot)$ replaced by $\mu$ and $q=1$. 
 
 The classical and $L_q$ geominimal surface areas  \cite{Lu96, Petty1974, Petty1985, Ye2015}  are important concepts in convex geometry, which are closely related to (\ref{1.6}). For instance, for $q>0$ and $K\in \mathcal{K}_0$, the $L_q$ geominimal surface area of $K$, denoted by $G_q(K)$, can be formulated by \begin{equation}\label{1.7} G_q(K)=
\inf\bigg\{\int_{S^{n-1}}\Big(\frac{h_L(u)}{h_K(u)}\Big)^qh_K(u)\,dS(K,u): L\in\mathcal{K}_0\ \ \text{and}\ \ |L^\circ|=\omega_n\bigg\},
\end{equation} 
where $L^\circ$ denotes the polar body of $L,$ i.e., $$L^\circ=\{y\in\mathbb{R}^n: \langle x,y \rangle\leq 1\ \text{for any}\ x\in L\}$$ with $\langle \cdot, \cdot\rangle$ the standard inner product on $\mathbb{R}^n$. Note that $|L|=\omega_n$ in (\ref{1.6}) is replaced by $|L^\circ|=\omega_n$ in (\ref{1.7}). However, (\ref{1.6}) and (\ref{1.7}) are completely different, and both of them play fundamental roles in the $L_q$ Brunn-Minkowski theory of convex bodies.   It has been proved \cite{Lu96, Petty1974, ZHY2017} that there exist  convex bodies $T_qK\in\mathcal{K}_0$ for $q>0$, the $L_q$ Petty bodies of $K,$    such that $|(T_qK)^\circ|=\omega_n$ and
$G_q(K)=nV_q(K,T_qK).$ Indeed, a variation problem of (\ref{1.7}),  with  $\mathcal{K}_0$ replaced by $\mathcal{S}_0$ (the set of all star bodies about the origin $o$), defines  the $L_q$ affine surface area of $K$ \cite{Lutwak1991, Lu96, Ye2015}: 
\begin{equation*}
\Omega_q(K)=\inf\bigg\{\int_{S^{n-1}}\Big(\frac{1}{\rho_L(u)h_K(u)}\Big)^qh_K(u)\,dS(K,u): L\in\mathcal{S}_0\ \ \text{and}\ \ |L|=\omega_n\bigg\}
\end{equation*} with $\rho_L: S^{n-1}\rightarrow(0,\infty)$ the radial function of  $L\in\mathcal{S}_0$.  The $L_q$ affine surface area has equivalent convenient integral formulas (see e.g. \cite{Blaschke1923, Hug1996, MW2000,CE1807,CE2004}) and important applications in, such as, the valuation theory, the approximation of convex bodies by polytopes, the $f$-divergence of convex bodies and the $L_q$ affine isoperimetric inequalities (see e.g. \cite{Gruber1993, JW2014,LR1999, LR2010, LSW2006,GE2012,CE2004,Werner2012,Werner2012c,WY2008,GZhang2007}).

Extension from the $L_q$ Brunn-Minkowski theory to the Orlicz theory is rather dedicated. In view of (\ref{1.4})-(\ref{1.7}), a major task is to get the ``right" formula of the Orlicz mixed volume. In the Orlicz theory, there are at least 3 different ways to define the Orlicz mixed volume and each of them has its own advantages. These Orlicz mixed volumes are given as follows: for $K,L\in\mathcal{K}_0,$ and $\phi,\ \varphi:(0,\infty)\rightarrow(0,\infty)$ continuous functions,
\begin{eqnarray}
V_\varphi(K,L)&=&\frac{1}{n}\int_{S^{n-1}}\varphi\Big(\frac{h_L(u)}{h_K(u)}\Big)h_K(u)\,dS(K,u),\label{1.8}\\
V_{\varphi,\phi}(K,L)&=&\frac{1}{n}\int_{S^{n-1}}\frac{\varphi(h_L(u))}{\phi(h_K(u))}\,dS(K,u),\label{1.9}
\end{eqnarray}
and if in addition $\varphi\in\mathcal{I},$
\begin{equation}
\widehat{V}_\varphi(K,L)=\inf\bigg\{\lambda>0:\int_{S^{n-1}}\varphi\Big(\frac{n|K|\cdot h_L(u)}{\lambda \cdot h_K(u)}\Big)h_K(u)\,dS(K,u)\leq n|K|\bigg\},
\label{1.10}\end{equation}
where $\mathcal{I}$ refers to the set of continuous functions $\varphi: (0,\infty)\rightarrow(0,\infty)$ such that
$\varphi$ is strictly increasing, $\lim_{t \to 0^+}\varphi(t)=0,$ $\varphi(1)=1$ and $\lim_{t \to \infty}\varphi(t)=\infty.$
The $L_q$ mixed volume ($q>0$) defined by (\ref{1.4}) is a special case of (\ref{1.8})-(\ref{1.10}), namely, for  $\varphi(t)=t^q$ and $\phi(t)=t^{q-1}$, $$V_q(K,L)=V_\varphi(K,L)=V_{\varphi,\phi}(K,L)=n^{-q}\cdot|K|^{1-q}\big(\widehat{V}_\varphi(K,L)\big)^q.$$

Note that $V_\varphi(\cdot,\cdot)$  and $V_{\varphi,\phi}(\cdot,\cdot)$ have geometric interpretations (see e.g. \cite{GHW2014,DHG2014,ZHY2017}); while there are no geometric interpretations for $\widehat{V}_\varphi(\cdot,\cdot)$ in literature. Also note that $\widehat{V}_{\varphi}(\cdot,\cdot)$ is homogeneous \cite{ZHY2017} but $V_\varphi(\cdot,\cdot)$ and $V_{\varphi,\phi}(\cdot,\cdot)$ are not homogeneous.  For $V_\varphi(\cdot,\cdot)$ and $\widehat{V}_{\varphi}(\cdot,\cdot)$, one has the Orlicz-Minkowski inequalities \cite{GHW2014,DHG2014}: for $K,L\in\mathcal{K}_0$ and $\varphi$ being a convex function, then
\begin{equation}
V_\varphi(K,L)\geq |K|\cdot\varphi\bigg(\bigg(\frac{|L|}{|K|}\bigg)^{\frac{1}{n}}\bigg),\label{1.12}
\end{equation}
and if in addition $\varphi\in\mathcal{I}$,
\begin{equation}
\widehat{V}_{\varphi}(K,L)\geq n|K|^{\frac{n-1}{n}}|L|^{\frac{1}{n}}.\label{1.13}
\end{equation}
Equality holds if and only if $K$ and $L$ are dilates if $\varphi$ is also strictly convex.
The Orlicz-Minkowski inequalities (\ref{1.12}) and (\ref{1.13}) imply that, if $\varphi\in\mathcal{I}$ is strictly convex,
\begin{eqnarray}
\inf\Big\{V_{\varphi}(K,L): L\in\mathcal{K}_0\ \ \text{and}\ \ |L|=\omega_n\Big\} \ \ \mathrm{and}\ \ 
\inf\Big\{\widehat{V}_{\varphi}(K,L): L\in\mathcal{K}_0\ \ \text{and}\ \ |L|=\omega_n\Big\} \label{1-1-1-1} \end{eqnarray}   have unique solutions, respectively. It seems intractable to pose the Minkowski type problems related to $V_\varphi(\cdot,\cdot)$ and $\widehat{V}_\varphi(\cdot,\cdot)$.  However,  the Orlicz-Minkowski problem can be asked  (related to $V_{\varphi,\phi}(\cdot,\cdot)$): {\em under what conditions on a nonzero finite measure $\mu$ on $S^{n-1}$ and a continuous function $\varphi:(0,\infty)\rightarrow(0,\infty)$, there exists a convex body $K$, such that, $\tau \,dS(K,\cdot)=\varphi(h_K)\,d\mu$ for some positive constant $\tau$?}  Solutions of this Orlicz-Minkowski problem can be found in \cite{HLYZ2010,HH2012,Li2014}.

  Ye \cite{Ye2015c} and Zhu, Hong and Ye \cite{ZHY2017} investigated the following optimization problems and gave detailed studies of the (nonhomogeneous and homogeneous) Orlicz geominimal surface areas $G^{orlicz}_\varphi(\cdot)$ and $\widehat{G}^{orlicz}_\varphi(\cdot)$: under certain conditions on $\varphi,$ let
\begin{eqnarray}
G^{orlicz}_\varphi(K)&=&\inf\Big\{nV_{\varphi}(K,L): L\in\mathcal{K}_0\ \ \text{and}\ \ |L^\circ|=\omega_n\Big\}  \label{1-1-1-1---2}\\
\widehat{G}^{orlicz}_\varphi(K)&=&\inf\Big\{\widehat{V}_{\varphi}(K,L): L\in\mathcal{K}_0\ \ \text{and}\ \ |L^\circ|=\omega_n\Big\}.\label{1-1-1-2}
\end{eqnarray}
  In particular, Zhu, Hong and Ye \cite{ZHY2017} proved that, under certain conditions on $\varphi,$ there exist convex bodies $T_\varphi K$ and $\widehat{T}_\varphi K,$ called the Orlicz-Petty bodies of $K,$ such that $|(T_\varphi K)^\circ|=|(\widehat{T}_\varphi K)^\circ|=\omega_n,$  $$G^{orlicz}_\varphi(K)=nV_{\varphi}(K,T_\varphi K)\ \ \text{and}\ \ \ \widehat{G}^{orlicz}_\varphi(K)=\widehat{V}_{\varphi}(K,\widehat{T}_\varphi K).$$
One can also see \cite{SHG2015} for a special case.

Motivated by (\ref{1-1-1-1})-(\ref{1-1-1-2}) and  the relations between (\ref{1.6})-(\ref{1.7}), we propose and study the following problems in Section \ref{section:3}. \vskip 2mm \noindent {\bf The polar Orlicz-Minkowski problems:} {\em under what conditions on a nonzero finite measure $\mu$ and a function $\varphi:(0,\infty)\rightarrow(0,\infty)$, there exists a convex body $K\in\mathcal{K}_0$ such that
$K$ is an optimizer of the following optimization problems:
\begin{equation}\label{Orlicz-Minkowski problem}
\inf/\sup \bigg\{\int_{S^{n-1}}\varphi\big( h_L \big) \,d \mu: L \in \mathcal{K}_{0} \ \text{and}\   |L^\circ|=\omega_{n}\bigg\}.
\end{equation}}\\ \noindent The polar Orlicz-Minkowski problems are not the same as (\ref{1-1-1-1---2})-(\ref{1-1-1-2}) because the measure $\mu$ is not assumed to be related to any convex bodies, and are also  completely different from the Orlicz-Minkowski problems.  

Our main result in Section \ref{section:3} is summarized in the following theorem. Let $\Omega$  be the set of all nonzero finite Borel measures on $S^{n-1}$ that are not concentrated on any hemisphere of $S^{n-1}$.
\begin{theorem}
\noindent Let $\mu\in\Omega$  and $\varphi \in \mathcal{I}.$ Then there exists a convex body $M\in\mathcal{K}_0$ such that $|M^\circ|=\omega_{n}$ and
$$\int_{S^{n-1}}\varphi(h_M(u))d\mu(u)=\inf\bigg\{\int_{S^{n-1}}\varphi(h_L(u))\,d\mu(u):L\in\mathcal{K}_0\ \ \text{and}\ \ |L^\circ|=\omega_n\bigg\}.$$
Moreover, if $\varphi\in\mathcal{I}$ is convex, then $M$ is the unique solution to the polar Orlicz-Minkowski problem.
\end{theorem}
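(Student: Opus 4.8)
The plan is to prove existence by the direct method of the calculus of variations and then to deduce uniqueness from a Brunn--Minkowski type inequality for the polar-volume functional. Write $I$ for the infimum in the statement; it is nonnegative and finite, since $B^n_2$ is admissible (as $|(B^n_2)^\circ|=\omega_n$) and $\int_{S^{n-1}}\varphi(h_{B^n_2})\,d\mu=\varphi(1)\,\mu(S^{n-1})=\mu(S^{n-1})<\infty$. Fix a minimizing sequence $\{L_j\}\subseteq\mathcal{K}_0$ with $|L_j^\circ|=\omega_n$ and $\int_{S^{n-1}}\varphi(h_{L_j})\,d\mu\to I$. The step I expect to be the main obstacle is to show that $\{L_j\}$ is both uniformly bounded and uniformly nondegenerate; once this is in place, Blaschke selection together with a continuity argument finishes the existence part.

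For the uniform upper bound I would use that $\mu\in\Omega$. The function $v\mapsto\int_{S^{n-1}}\langle v,u\rangle_+\,d\mu(u)$, with $\langle v,u\rangle_+:=\max\{\langle v,u\rangle,0\}$, is continuous and strictly positive on $S^{n-1}$ — were it zero at some $v_0$, then $\mu$ would be concentrated on the hemisphere $\{u:\langle v_0,u\rangle\le0\}$ — hence it is bounded below by some $c_1>0$, and a splitting of the integral then gives $\delta_0>0$ and $c_2>0$ such that $\mu(\{u:\langle v,u\rangle\ge\delta_0\})\ge c_2$ for all $v\in S^{n-1}$. Let $R_j:=\max_{x\in L_j}|x|$, attained at some $x_j\in L_j$, and put $v_j:=x_j/R_j\in S^{n-1}$; then $h_{L_j}(u)\ge\langle x_j,u\rangle=R_j\langle v_j,u\rangle$ and $h_{L_j}\ge0$, so $h_{L_j}(u)\ge R_j\delta_0$ whenever $\langle v_j,u\rangle\ge\delta_0$, whence $\int_{S^{n-1}}\varphi(h_{L_j})\,d\mu\ge c_2\,\varphi(R_j\delta_0)$. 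Since $\varphi$ is increasing with $\varphi(t)\to\infty$ as $t\to\infty$ and the left-hand side stays bounded, the $R_j$ are bounded, i.e. $L_j\subseteq RB^n_2$ for some fixed $R$.

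For the uniform nondegeneracy I would combine this with the constraint $|L_j^\circ|=\omega_n$: from $L_j\subseteq RB^n_2$ we get $L_j^\circ\supseteq R^{-1}B^n_2$, and a point of $L_j^\circ$ at distance $\rho$ from $o$ forces $L_j^\circ$ to contain a cone over an $(n-1)$-ball of radius $R^{-1}$, of volume $\tfrac1n\omega_{n-1}R^{-(n-1)}\rho$; since $|L_j^\circ|=\omega_n$ this bounds $\rho$, so $L_j^\circ\subseteq R'B^n_2$ and hence $L_j\supseteq(R')^{-1}B^n_2$ for some fixed $R'$. Thus $(R')^{-1}B^n_2\subseteq L_j\subseteq RB^n_2$ for all $j$, so by Blaschke's selection theorem a subsequence converges in the Hausdorff metric to a convex body $M$ with $(R')^{-1}B^n_2\subseteq M\subseteq RB^n_2$; in particular $M\in\mathcal{K}_0$. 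Since $o\in\mathrm{int}\,M$, polarity is continuous at $M$, so $L_j^\circ\to M^\circ$ and $|M^\circ|=\lim_j|L_j^\circ|=\omega_n$; moreover $h_{L_j}\to h_M$ uniformly on $S^{n-1}$ with all values in the fixed compact interval $[(R')^{-1},R]\subseteq(0,\infty)$, so by uniform continuity of $\varphi$ there and finiteness of $\mu$, $\int_{S^{n-1}}\varphi(h_{L_j})\,d\mu\to\int_{S^{n-1}}\varphi(h_M)\,d\mu$. Hence $\int_{S^{n-1}}\varphi(h_M)\,d\mu=I$ and $M$ is a minimizer. I emphasize that the constraint $|L^\circ|=\omega_n$ alone does not prevent a minimizing sequence from collapsing toward a lower-dimensional set or from pushing the origin to the boundary; the nondegeneracy really has to be drawn from the interaction of that constraint with the a priori bound $L_j\subseteq RB^n_2$.

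For uniqueness, suppose $\varphi$ is convex and that $M_1\neq M_2$ are both minimizers; set $N=\tfrac12M_1+\tfrac12M_2\in\mathcal{K}_0$. From $1/\rho_{N^\circ}=h_N=\tfrac12h_{M_1}+\tfrac12h_{M_2}=\tfrac12/\rho_{M_1^\circ}+\tfrac12/\rho_{M_2^\circ}$ and the reverse Minkowski inequality with exponent $-n$ (applied to $\rho_{M_1^\circ}^{-1}$ and $\rho_{M_2^\circ}^{-1}$ on $S^{n-1}$; equivalently a Brunn--Minkowski type inequality for the polar volume), one obtains $|N^\circ|^{-1/n}\ge\tfrac12|M_1^\circ|^{-1/n}+\tfrac12|M_2^\circ|^{-1/n}=\omega_n^{-1/n}$, with equality only when $\rho_{M_1^\circ}$ and $\rho_{M_2^\circ}$ are proportional, i.e. $M_1$ and $M_2$ are dilates — which, together with $|M_1^\circ|=|M_2^\circ|=\omega_n$, would force $M_1=M_2$. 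Since $M_1\neq M_2$, the inequality is strict, so $|N^\circ|<\omega_n$ and $\lambda N$ with $\lambda:=(|N^\circ|/\omega_n)^{1/n}<1$ is admissible. Finally, convexity of $\varphi$ gives $\int_{S^{n-1}}\varphi(h_N)\,d\mu\le\tfrac12\int_{S^{n-1}}\varphi(h_{M_1})\,d\mu+\tfrac12\int_{S^{n-1}}\varphi(h_{M_2})\,d\mu=I$, while $\lambda<1$, $h_N>0$ and the strict monotonicity of $\varphi$ give $\int_{S^{n-1}}\varphi(\lambda h_N)\,d\mu<\int_{S^{n-1}}\varphi(h_N)\,d\mu\le I$, contradicting that $I$ is the infimum over admissible bodies. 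Hence $M_1=M_2$. Everything after the compactness step — continuity of polarity and of volume under Hausdorff convergence, and the reverse Minkowski inequality with its equality case — is standard.
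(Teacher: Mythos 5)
Your proof is correct and follows essentially the same strategy as the paper's: a direct-method argument using the non-concentration of $\mu$ to bound the minimizing sequence, Blaschke selection plus continuity for existence, and the midpoint body $\tfrac12 M_1+\tfrac12 M_2$ with the constraint-tightening trick for uniqueness. The only differences are in execution — you derive a uniform lower bound $\mu(\{u:\langle v,u\rangle\ge\delta_0\})\ge c_2$ over all $v\in S^{n-1}$ where the paper passes to a subsequence $u_i\to v$ and applies Fatou's lemma, you prove the uniform inner-ball bound directly via a cone-volume estimate where the paper invokes its Lemma 2.2 (citing Lutwak), and you use the reverse Minkowski inequality with exponent $-n$ where the paper uses pointwise convexity of $t^{-n}$ — all of which are sound and interchangeable with the paper's steps.
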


In Section \ref{section:4}, we replace $V_{\varphi}(\cdot,\cdot)$ and $\widehat{V}_{\varphi}(\cdot,\cdot)$ by their $p$-capacitary counterparts, and study the existence and continuity of the $p$-capacitary Orlicz-Petty bodies.
Hereafter,  for $K,L\in\mathcal{K}_0,$ $p\in(1,n)$ and $\varphi:(0,\infty)\rightarrow(0,\infty),$ the Orlicz mixed $p$-capacities of $K$ and $L$ are given by:
\begin{eqnarray}
 C_{p,\varphi}(K,L) &=&\frac{p-1}{n-p}\int_{S^{n-1}}\varphi\left( \frac{h_L(u)}{h_K(u)} \right)h_K(u)\,d \mu_p(K,u),\label{1.18}\\
 1&=&\int_{S^{n-1}}\varphi\bigg(\frac{C_p(K)\cdot h_{L}(u)}{\widehat{C}_{p,\varphi}(K,L)\cdot h_{K}(u)}\bigg)\,d \mu^*_p(K,u)\ \ \ \  \text{if in addition}\  \varphi\in\mathcal{I},\label{1.19}
\end{eqnarray}
where $\mu_p(K, \cdot)$ and $\mu^*_p(K,\cdot)$ are measures  given by (\ref{measure-38}) and (\ref{measure-37}), respectively.  Related to $C_{p,\varphi}(\cdot,\cdot)$ and $\widehat{C}_{p,\varphi}(\cdot,\cdot)$, there are the $p$-capacitary Orlicz-Minkowski inequalities: for $p\in(1,n),$ $K,L\in\mathcal{K}_0$ and $\varphi\in\mathcal{I}$ being convex \cite{HYZ2017I}, one has,
\begin{eqnarray*}
C_{p,\varphi}(K,L)\geq C_p(K)\cdot\varphi\bigg(\bigg(\frac{C_p(L)}{C_p(K)}\bigg)^{\frac{1}{n-p}}\bigg)\ \ \  \text{and}\ \ \ 
\widehat{C}_{p,\varphi}(K,L)\geq C_p(K)\bigg(\frac{C_p(L)}{C_p(K)}\bigg)^{\frac{1}{n-p}},
\end{eqnarray*}
with equality if and only if $K$ and $L$ are dilates if $\varphi$ is strictly convex. One can also ask the $p$-capacitary $L_q$ and Orlicz Minkowski problems (i.e., with $S(K,\cdot)$ replaced by $\mu_p(K,\cdot)$); these problems have received extensive attention (see e.g. \cite{Cole2015, HYZ2017I,Jenkinson1996,Jenkinson1996c,DG}). 

Our major interest in  Section \ref{section:4} is the following problems: for $p\in(1,n),$ find optimizers of the following optimization problems:
\begin{eqnarray*}
& &\sup/\inf \Big\{C_{p,\varphi}(K,L):L\in\mathcal{K}_0 \ \ \text{and}\  \ |L^\circ|=\omega_{n}\Big\},\\ 
& &\sup/\inf \Big\{\widehat{C}_{p,\varphi}(K,L):L\in\mathcal{K}_0 \ \ \text{and}\ \ |L^\circ|=\omega_{n}\Big\}.
\end{eqnarray*}
The main result in Section \ref{section:4} is stated as follows. 
\begin{theorem}\label{Theorem existence of petty bodies'}
\noindent Let $K \in \mathcal{K}_0$ be a convex body and $\varphi \in \mathcal{I}.$

\noindent(i) There exists a convex body $M\in\mathcal{K}_0$ such that $|M^\circ|=\omega_{n}$ and
$$C_{p,\varphi}(K,M)=\inf \Big\{C_{p,\varphi}(K,L):L\in\mathcal{K}_0 \ \ \mbox{and}\ \ |L^\circ|=\omega_{n}\Big\}.$$
\noindent(ii) There exists a convex body $\widehat{M}\in\mathcal{K}_0$ such that $|\widehat{M}^\circ|=\omega_{n}$ and
$$ \widehat{C}_{p,\varphi}(K,\widehat{M})=\inf \Big\{\widehat{C}_{p,\varphi}(K,L):L\in\mathcal{K}_0 \ \ \mbox{and}\ \ |L^\circ|=\omega_{n}\Big\}.$$
In addition, if $\varphi\in\mathcal{I}$ is convex, then both $M$ and $\widehat{M}$ are unique.
\end{theorem}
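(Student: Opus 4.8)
The plan is to treat both parts (i) and (ii) by the direct method of the calculus of variations, and to handle uniqueness by a Minkowski-combination argument exploiting the polar-volume constraint. \textbf{Existence.} Fix $K\in\mathcal{K}_0$ and $\varphi\in\mathcal{I}$. First note that the infimum in (i), call it $\mathcal{C}$, is finite, since $B^n_2\in\mathcal{K}_0$, $|(B^n_2)^\circ|=\omega_n$, and $C_{p,\varphi}(K,B^n_2)<\infty$ (because $o\in\mathrm{int}\,K$ pins $h_K$ between two positive constants on $S^{n-1}$ and $\varphi$ is continuous); similarly the infimum $\widehat{\lambda}$ in (ii) is finite and positive. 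I would then take a minimizing sequence $\{L_i\}\subseteq\mathcal{K}_0$ with $|L_i^\circ|=\omega_n$, and the first real step is to show $\{L_i\}$ is uniformly bounded. If it were not, then after passing to a subsequence I could pick $y_i\in L_i$ with $|y_i|\to\infty$ and $w_i:=y_i/|y_i|\to w\in S^{n-1}$; since $h_{L_i}(u)\ge\langle y_i,u\rangle\ge|y_i|\langle w_i,u\rangle$, on the set $\{u:\langle w_i,u\rangle>0\}$ the integrand of $C_{p,\varphi}(K,L_i)$ dominates $\varphi\big(|y_i|\langle w_i,u\rangle/\max_{S^{n-1}}h_K\big)\cdot\min_{S^{n-1}}h_K$, which tends to $+\infty$ pointwise on $\{u:\langle w,u\rangle>0\}$ because $\varphi$ is increasing with $\varphi(t)\to\infty$. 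As $\mu_p(K,\cdot)$ is not concentrated on any closed hemisphere, $\mu_p(K,\{u:\langle w,u\rangle>0\})>0$, so Fatou's lemma forces $C_{p,\varphi}(K,L_i)\to\infty$, contradicting minimality; hence $L_i\subseteq R_0B^n_2$ for a fixed $R_0$. The same Fatou argument, applied instead to the defining identity $1=\int_{S^{n-1}}\varphi\big(C_p(K)h_{L_i}/(\widehat{C}_{p,\varphi}(K,L_i)h_K)\big)\,d\mu^*_p(K,\cdot)$, gives uniform boundedness of a minimizing sequence in case (ii).

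Next I would bound the inradii from below: from $L_i\subseteq R_0B^n_2$ and $o\in L_i$ one gets $L_i^\circ\supseteq R_0^{-1}B^n_2$, and if $\mathrm{diam}(L_i^\circ)$ were unbounded then $L_i^\circ$ would contain the convex hull of $R_0^{-1}B^n_2$ with a point of norm tending to $\infty$, forcing $|L_i^\circ|\to\infty$ and contradicting $|L_i^\circ|=\omega_n$; so $L_i^\circ\subseteq R_1B^n_2$, i.e.\ $r_0B^n_2\subseteq L_i\subseteq R_0B^n_2$ with $r_0:=R_1^{-1}$. By the Blaschke selection theorem a subsequence converges in the Hausdorff metric to a body $M$ with $r_0B^n_2\subseteq M\subseteq R_0B^n_2$, so $M\in\mathcal{K}_0$; then $h_{L_i}\to h_M$ uniformly with all functions pinched in $[r_0,R_0]$, hence $\rho_{L_i^\circ}=1/h_{L_i}\to1/h_M=\rho_{M^\circ}$ uniformly and $|M^\circ|=\lim_i|L_i^\circ|=\omega_n$, while uniform continuity of $\varphi$ on the relevant compact interval gives $C_{p,\varphi}(K,L_i)\to C_{p,\varphi}(K,M)$ since $\mu_p(K,\cdot)$ is finite; this proves (i). For (ii) I would additionally use that the pinching $r_0\le h_{L_i}\le R_0$, together with the monotonicity and continuity in $\lambda$ of $\lambda\mapsto\int_{S^{n-1}}\varphi\big(C_p(K)h_L/(\lambda h_K)\big)\,d\mu^*_p(K,\cdot)$, confines $\widehat{C}_{p,\varphi}(K,L_i)$ to a fixed compact subinterval of $(0,\infty)$; passing to the limit in the defining identity along a convergent subsequence and invoking uniqueness of its solution identifies every subsequential limit of $\widehat{C}_{p,\varphi}(K,L_i)$ with $\widehat{C}_{p,\varphi}(K,M)$, so $\widehat{C}_{p,\varphi}(K,L_i)\to\widehat{C}_{p,\varphi}(K,M)$, proving (ii).

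\textbf{Uniqueness.} Assume $\varphi$ is convex and suppose $M_1\ne M_2$ both attain $\mathcal{C}$ in (i). Fix $t\in(0,1)$ and put $L_t=(1-t)M_1+tM_2$, so $h_{L_t}=(1-t)h_{M_1}+th_{M_2}$; convexity of $\varphi$ gives $C_{p,\varphi}(K,L_t)\le(1-t)C_{p,\varphi}(K,M_1)+tC_{p,\varphi}(K,M_2)=\mathcal{C}$. The crucial observation is $|L_t^\circ|<\omega_n$: since $s\mapsto1/s$ is strictly convex, $\rho_{L_t^\circ}(u)=\big((1-t)h_{M_1}(u)+th_{M_2}(u)\big)^{-1}\le(1-t)\rho_{M_1^\circ}(u)+t\rho_{M_2^\circ}(u)=:\rho_{N_t}(u)$ with strict inequality on the nonempty open set $\{u:h_{M_1}(u)\ne h_{M_2}(u)\}$, where $N_t$ is the radial combination of $M_1^\circ$ and $M_2^\circ$; Minkowski's inequality in $L^n(S^{n-1})$ gives $|N_t|^{1/n}\le(1-t)|M_1^\circ|^{1/n}+t|M_2^\circ|^{1/n}=\omega_n^{1/n}$, so $|L_t^\circ|<|N_t|\le\omega_n$. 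Then the rescaled body $s_tL_t$ with $s_t:=(|L_t^\circ|/\omega_n)^{1/n}<1$ is a competitor, $|(s_tL_t)^\circ|=\omega_n$, with $h_{s_tL_t}=s_th_{L_t}<h_{L_t}$, so $C_{p,\varphi}(K,s_tL_t)<C_{p,\varphi}(K,L_t)\le\mathcal{C}$, contradicting minimality; hence $M_1=M_2$. For (ii), with $\widehat{\lambda}$ the minimum value, the two identities defining $\widehat{C}_{p,\varphi}(K,M_1)=\widehat{C}_{p,\varphi}(K,M_2)=\widehat{\lambda}$ and convexity of $\varphi$ give $\int_{S^{n-1}}\varphi\big(C_p(K)h_{L_t}/(\widehat{\lambda}h_K)\big)\,d\mu^*_p(K,\cdot)\le1$, hence $\widehat{C}_{p,\varphi}(K,L_t)\le\widehat{\lambda}$; the same rescaling by $s_t<1$ then yields $\widehat{C}_{p,\varphi}(K,s_tL_t)<\widehat{\lambda}$, again a contradiction.

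\textbf{Main obstacle.} The variational machinery is routine; the real difficulty is the uniform upper bound on a minimizing sequence. Unlike Minkowski-type problems under $|L|=\omega_n$, the constraint $|L^\circ|=\omega_n$ does not by itself keep $L$ bounded, so one must make the objective blow up, and this is exactly where the non-degeneracy of $\mu_p(K,\cdot)$ (its not being concentrated on a hemisphere) is essential; the lower bound on the inradius then falls out of $|L^\circ|=\omega_n$ alone. The second delicate point is uniqueness: recognizing that Minkowski combinations of competitors have polar volume strictly below $\omega_n$—via strict convexity of $1/s$ and the $L^n$-Minkowski inequality—so that an arbitrarily small rescaling strictly improves the objective. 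That is what makes the conclusion hold under mere convexity, rather than strict convexity, of $\varphi$.
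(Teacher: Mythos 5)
Your proposal is correct and follows essentially the same route as the paper: a direct-method existence argument whose key coercivity step is the Fatou/non-concentration-on-hemispheres blow-up for an unbounded minimizing sequence (the paper's Proposition 4.2), followed by Blaschke selection and continuity of $C_{p,\varphi}$ and $\widehat{C}_{p,\varphi}$, and a uniqueness argument via a Minkowski combination of two minimizers, convexity of $\varphi$, the strict drop in polar volume, and a rescaling that strictly improves the objective. The only differences are cosmetic (general $t$ versus $t=\tfrac12$, the $L^n$-Minkowski inequality versus direct convexity of $s^{-n}$ for the polar-volume estimate, and a hands-on inradius bound in place of the paper's Lemma 2.2).
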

The convex bodies $M$ and $\widehat{M}$ in Theorem \ref{Theorem existence of petty bodies'} are called the $p$-capacitary Orlicz-Petty bodies of $K$.
The continuity of the $p$-capacitary Orlicz-Petty bodies is provided in Theorem \ref{Theorem 4.2}.

\section{ Preliminaries and Notations}\label{section:2}
A subset $K\subseteq \mathbb{R}^n$ is said to be convex if $\lambda x+(1-\lambda) y \in K$ for any $\lambda \in [0,1]$ and $x,y\in K$. A convex body is a convex compact subset of $\mathbb{R}^n$ with nonempty interior. A convex body $K$ is said to be origin-symmetric if $-x\in K$ for any $x\in K$. We use $\mathcal{K}$ and $\mathcal{K}_0\subseteq \mathcal{K}$ to denote the set of all convex bodies, respectively,  and the set of all convex bodies with the origin in their interiors. The Minkowski sum of $K,L\in\mathcal{K}$, denoted by $K+L$, is defined by $$K+L=\{x+y: x\in K, y\in L\}.$$
For $\lambda \in \mathbb{R}$, the scalar product of $\lambda$ and $K,$ denoted by $\lambda K,$ is defined by $\lambda K=\{\lambda x: x\in K\}.$
For $K \in \mathcal{K},$ $|K|$ refers to the volume of $K$. In particular, $\omega_n$ represents the volume of the unit ball $B_2^n \subseteq \mathbb{R}^n.$ For $K \in \mathcal{K},$ one can define the volume radius of $K$ by $$\text{vrad}(K)=\bigg(\frac{|K|}{\omega_n}\bigg)^{\frac{1}{n}}.$$

The support function of a nonempty convex compact set $K\subseteq\mathbb{R}^n$, $h_K: S^{n-1}\rightarrow \mathbb{R}$, is defined by
$$h_K(u)=\max_{x\in K}{\langle x,u\rangle} \ \ \text{for any} \ u \in S^{n-1},$$
where $S^{n-1}$ is the unit sphere. It can be easily checked that, for any $\lambda \geq 0$ and $K,L \in \mathcal{K},$ $h_{\lambda K}(u)=\lambda h_K(u)$ and $h_{K+L}(u)=h_K(u)+h_L(u) \ \text{for any} \ u \in S^{n-1}.$ The Hausdorff distance between two convex compact sets $K, L\subseteq \mathbb{R}^n$,
denoted by $d_H(K,L)$, is given by
$$
d_H(K,L)=\max_{u\in S^{n-1}}|h_K(u)-h_L(u)|.$$ For a sequence ${\left\{K_i\right\}}_{i=1}^{\infty}\subseteq \mathcal{K}_0$ and a convex compact set $K$, $K_i \rightarrow K$ as $i \rightarrow \infty$ with respect to the Hausdorff metric means that $d_H(K_i,K) \rightarrow 0$ as $i \rightarrow \infty$.

A subset $L\subseteq\mathbb{R}^{n}$ is called a star-shaped set about the origin if for $\text{any}\ x\in L$, the line segment from the origin $o$ to $x$ is contained in $L$. The radial function of a star-shaped set $L$ about the origin $o$, $\rho_L:S^{n-1}\rightarrow [0,\infty)$, is defined by
$$\rho_L(u)=\max\{r\geq0:\,\,r u\in L\} \ \ \  \text{for any} \ u \in S^{n-1}.$$
A star-shaped set $L\subseteq\mathbb{R}^n$ about the origin $o$ is called a star body about the origin $o$ if the radial function $\rho_L$ is positive and continuous on $S^{n-1}.$ Denote by $\mathcal{S}_0$ the set of all star bodies about the origin $o$. Obviously, $\mathcal{K}_0\subseteq\mathcal{S}_0$.

The polar body $K^{\circ}$ of $K \in \mathcal{K}_0$ is defined by $$K^{\circ}=\{x\in\mathbb{R}^n:\langle x,y\rangle \leq 1 \ \text{for any} \ y \in K\},$$ where $\langle \cdot, \cdot \rangle$ is the standard inner product in $\mathbb{R}^n$. By $K^{\circ\circ}$, we mean the polar body of $K^{\circ}$, and hence $K^{\circ\circ}=K$ if $K\in \mathcal{K}_0$ \cite[Theorem 1.6.1]{Schn2014}. 
 It can be proved \cite{Schn2014} that for any $K \in \mathcal{K}_0,$
\be\label{measure-3}
\rho_{K^{\circ}}(u)=\frac{1}{h_K(u)}\ \ \,\text{and}\ \ \ h_{K^{\circ}}(u)=\frac{1}{\rho_K(u)} \,\, \, \, \text{for any}\ u \in S^{n-1}.
\ee For $K\in \mathcal{S}_0$, the following volume formula for $|K|$ holds:
\be\label{measure-4}|K|
=\frac{1}{n}\int_{S^{n-1}}\rho_K(u)^n \,d\sigma(u),
\ee
where $\sigma(\cdot)$ is the spherical measure on $S^{n-1}.$ Associated to each $K\in\mathcal{K}_0$, the surface area measure of $K$ on $S^{n-1}$, denoted by $S(K,\cdot),$ is defined by: for any measurable subset $A\subseteq S^{n-1}$,
$$
S(K,A)=\int_{\nu_K^{-1}(A)} \,d\mathcal{H}^{n-1},$$
where $\nu_K^{-1}:S^{n-1}\rightarrow \partial K$ is the inverse Gauss map of $K$ 
and $\mathcal{H}^{n-1}$ is the $(n-1)$-dimensional Hausdorff measure on $\partial K.$

Denote by $C(S^{n-1})$ the set of all continuous functions on $S^{n-1}.$ Let ${\left\{\mu_i\right\}}_{i=1}^{\infty}$ and $\mu$  be measures on $S^{n-1}$. We say $\mu_i\rightarrow \mu$ weakly if for any $f\in C(S^{n-1})$,
$$\lim_{i\rightarrow \infty}\int_{S^{n-1}}f\,d\mu_i=\int_{S^{n-1}}f\,d\mu.$$

The following lemma regarding the weak convergence $\mu_i\rightarrow\mu$ is often used.
\begin{lemma}\label{uniformly converge-lemma}
If a sequence of measures ${\left\{\mu_i\right\}}_{i=1}^{\infty}$ on $S^{n-1}$ converges weakly to a finite measure $\mu$ on $S^{n-1}$ and a sequence of functions ${\left\{f_i\right\}}_{i=1}^{\infty}\subseteq C(S^{n-1})$ converges uniformly to a function $f\in C(S^{n-1})$, then
$$\lim_{i\rightarrow \infty}\int_{S^{n-1}}f_i\,d\mu_i=\int_{S^{n-1}}f\,d\mu.$$
\end{lemma}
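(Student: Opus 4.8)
The plan is to reduce the claim to two elementary facts: \emph{uniform} convergence of the integrands controls their difference against any single measure, and \emph{weak} convergence handles one fixed continuous integrand. For each $i$ I would write
\[
\int_{S^{n-1}} f_i \, d\mu_i - \int_{S^{n-1}} f \, d\mu = \int_{S^{n-1}} (f_i - f) \, d\mu_i + \left( \int_{S^{n-1}} f \, d\mu_i - \int_{S^{n-1}} f \, d\mu \right),
\]
and estimate the two terms on the right separately. The second term is immediate: $f$ is a \emph{fixed} element of $C(S^{n-1})$, so the very definition of weak convergence $\mu_i \to \mu$ gives $\int_{S^{n-1}} f \, d\mu_i \to \int_{S^{n-1}} f \, d\mu$ as $i \to \infty$.

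For the first term, I would first establish the uniform bound $\sup_i \mu_i(S^{n-1}) < \infty$. Since $S^{n-1}$ is compact, the constant function $\mathbf{1}$ lies in $C(S^{n-1})$, and testing weak convergence against it gives $\mu_i(S^{n-1}) = \int_{S^{n-1}} \mathbf{1} \, d\mu_i \to \mu(S^{n-1}) < \infty$; a convergent sequence of reals is bounded, so there is a constant $C>0$ with $\mu_i(S^{n-1}) \le C$ for all $i$ (this also shows each $\mu_i$ is finite, so all integrals above make sense). Writing $\|\cdot\|_\infty$ for the sup-norm on $S^{n-1}$, one then has
\[
\left| \int_{S^{n-1}} (f_i - f) \, d\mu_i \right| \le \int_{S^{n-1}} |f_i - f| \, d\mu_i \le \|f_i - f\|_{\infty} \cdot \mu_i(S^{n-1}) \le C \, \|f_i - f\|_{\infty},
\]
and since $f_i \to f$ uniformly, $\|f_i - f\|_{\infty} \to 0$, so the first term tends to $0$.

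Combining the two estimates and letting $i \to \infty$ in the displayed identity yields $\lim_{i\to\infty}\int_{S^{n-1}} f_i \, d\mu_i = \int_{S^{n-1}} f \, d\mu$, as claimed. There is no real obstacle here; the only point worth isolating is the uniform bound $\sup_i \mu_i(S^{n-1}) < \infty$, which, as noted, follows at once from testing weak convergence against $\mathbf{1}$ on the compact sphere. One could equivalently phrase the argument with $\varepsilon$'s — given $\varepsilon>0$, pick $i$ large enough that $\|f_i - f\|_\infty < \varepsilon/(2C)$ and $|\int_{S^{n-1}} f \, d\mu_i - \int_{S^{n-1}} f \, d\mu| < \varepsilon/2$ — but the decomposition above already makes the conclusion transparent.
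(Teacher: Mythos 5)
Your proof is correct and is the standard argument for this lemma: split $\int f_i\,d\mu_i-\int f\,d\mu$ into a uniform-convergence term controlled by the bound $\sup_i\mu_i(S^{n-1})<\infty$ (obtained by testing weak convergence against the constant function $\mathbf{1}$) and a weak-convergence term for the fixed integrand $f$. The paper states this lemma without proof as a known fact, and your write-up supplies exactly the expected reasoning, with the only implicit assumption being that the $\mu_i$ are nonnegative (as they are throughout the paper), which justifies the triangle inequality $\bigl|\int (f_i-f)\,d\mu_i\bigr|\le\int|f_i-f|\,d\mu_i$.
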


We shall also need the following lemma. 

\begin{lemma}  \label{blaschke-selection}
Let ${\left\{K_i\right\}}_{i=1}^{\infty}\subseteq \mathcal{K}_0$ be a uniformly bounded sequence such that the sequence ${\left\{|K_i^{\circ}|\right\}}_{i=1}^{\infty}$ is bounded. Then, there exists a subsequence $\{K_{i_j}\}_{j=1}^{\infty}$ of ${\left\{K_i\right\}}_{i=1}^{\infty}$ and a convex body $K\in   \mathcal{K}_0$ such that $K_{i_j}\rightarrow K$.  Moreover, if $|K_i^{\circ}|=\omega_n$ for all $i=1, 2, \cdots$, then $|K^\circ|=\omega_n$.   \end{lemma}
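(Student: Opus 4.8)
The idea is to run the classical Blaschke selection theorem and then use the hypothesis on the polar volumes purely as a \emph{nondegeneracy} device, to prevent the Hausdorff limit from touching the origin on its boundary or collapsing to a lower-dimensional set. First I would fix $R>0$ with $K_i\subseteq RB_2^n$ for every $i$, which is exactly what uniform boundedness provides. By the Blaschke selection theorem, the bounded sequence $\{K_i\}_{i=1}^\infty$ admits a subsequence $\{K_{i_j}\}_{j=1}^\infty$ converging in the Hausdorff metric to a nonempty compact convex set $K$; since $o\in K_i$ for all $i$, also $o\in K$, and support functions converge uniformly, i.e.\ $h_{K_{i_j}}\to h_K$ uniformly on $S^{n-1}$.

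The crux is to show $o\in\operatorname{int}K$, equivalently $\delta:=\min_{u\in S^{n-1}}h_K(u)>0$. Because each $h_{K_i}$ is positive and the convergence is uniform, we already know $h_K\ge 0$, so it suffices to exclude the case $h_K(u_0)=0$ for some $u_0\in S^{n-1}$. In that case put $t_j:=h_{K_{i_j}}(u_0)$, so $t_j>0$ and $t_j\to 0$. Then on one hand $\frac1R B_2^n=(RB_2^n)^\circ\subseteq K_{i_j}^\circ$, and on the other hand the point $\frac1{t_j}u_0$ lies in $K_{i_j}^\circ$ (since $\langle x,\tfrac1{t_j}u_0\rangle\le \tfrac1{t_j}h_{K_{i_j}}(u_0)=1$ for all $x\in K_{i_j}$). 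By convexity $K_{i_j}^\circ$ contains the cone with apex $\frac1{t_j}u_0$ over the $(n-1)$-dimensional disk of radius $\frac1R$ centered at the origin in $u_0^\perp$, and this cone has $n$-volume $\frac{\omega_{n-1}}{nR^{n-1}t_j}\to\infty$ as $j\to\infty$. This contradicts the boundedness of $\{|K_i^\circ|\}$. Hence $\delta>0$, so $\delta B_2^n\subseteq K$, $K$ has nonempty interior, and $K\in\mathcal K_0$.

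For the last assertion, once we know $h_K\ge\delta>0$ on $S^{n-1}$ and $h_{K_{i_j}}\to h_K$ uniformly, the functions $\rho_{K_{i_j}^\circ}=1/h_{K_{i_j}}$ converge uniformly to $1/h_K=\rho_{K^\circ}$ on $S^{n-1}$ (for $j$ large the denominators are bounded below by $\delta/2$, so $1/t\mapsto$ is Lipschitz on the relevant range). Plugging into the radial volume formula \eqref{measure-4} gives $|K_{i_j}^\circ|=\frac1n\int_{S^{n-1}}\rho_{K_{i_j}^\circ}^n\,d\sigma\to\frac1n\int_{S^{n-1}}\rho_{K^\circ}^n\,d\sigma=|K^\circ|$. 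Therefore, if $|K_i^\circ|=\omega_n$ for all $i$, then $|K^\circ|=\omega_n$. The only genuinely nonroutine step is the nondegeneracy argument of the second paragraph; the rest is a standard invocation of Blaschke selection and of continuity of the polar radial function and of volume under uniform convergence.
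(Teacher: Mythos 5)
Your proof is correct, and its overall architecture coincides with the paper's: Blaschke selection produces a Hausdorff-convergent subsequence, the boundedness of the polar volumes forces the limit body to lie in $\mathcal{K}_0$, and continuity of the polar volume gives the final assertion. The difference is that the paper outsources the two nontrivial steps to citations --- it invokes Lutwak's lemma from \cite{Lu96} (if $K_i\rightarrow K$ with $\{|K_i^{\circ}|\}$ bounded, then $K\in\mathcal{K}_0$) and the continuity of volume under the Hausdorff metric --- whereas you prove both from scratch. Your nondegeneracy argument (if $h_K(u_0)=0$, then $K_{i_j}^{\circ}$ contains both $\tfrac{1}{R}B_2^n$ and the far-away point $t_j^{-1}u_0$, hence a cone of volume $\tfrac{\omega_{n-1}}{nR^{n-1}t_j}\rightarrow\infty$, contradicting the bound on $|K_{i_j}^{\circ}|$) is precisely the standard proof of Lutwak's lemma, and it is carried out correctly: the apex $t_j^{-1}u_0$ does lie in $K_{i_j}^{\circ}$, the base disk lies in $(RB_2^n)^{\circ}$, and the cone volume computation is right. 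Likewise, your explicit verification that $\rho_{K_{i_j}^{\circ}}=1/h_{K_{i_j}}\rightarrow 1/h_K=\rho_{K^{\circ}}$ uniformly (using the lower bound $\delta$ on $h_K$ and uniform convergence of support functions) combined with the radial volume formula (\ref{measure-4}) is a correct substitute for the cited continuity of $K\mapsto|K^{\circ}|$ on $\mathcal{K}_0$. In short, your version buys self-containedness at the cost of length; nothing is missing or wrong.
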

\begin{proof}   Lutwak \cite{Lu96} proved that,  if  ${\left\{K_i\right\}}_{i=1}^{\infty}\subseteq \mathcal{K}_0$ and $K$ a convex compact set satisfy that $K_i\rightarrow K$ as $i\rightarrow\infty$ with respect to the Hausdorff metric and the sequence ${\left\{|K_i^{\circ}|\right\}}_{i=1}^{\infty}$ is bounded, then $K\in \mathcal{K}_0$. On the other hand, the Blaschke selection theorem \cite{Schn2014} asserts that, if ${\left\{K_i\right\}}_{i=1}^{\infty}$ is a bounded sequence of convex compact sets in $\mathbb{R}^n$, then there exist a subsequence $\{K_{i_j}\}_{j=1}^\infty$ of $\{K_i\}_{i=1}^\infty$ and a convex compact set $K$, such that, $K_{i_j}\rightarrow K$ as $j\rightarrow \infty$ with respect to the Hausdorff metric. Combining the above two statements, one can immediately get $K_{i_j}\rightarrow K\in \mathcal{K}_0.$ Moreover, if $|K_i^{\circ}|=\omega_n$ for all $i=1, 2, \cdots$, then $|K^\circ|=\lim_{j\rightarrow \infty} |K_{i_j}^\circ|=\omega_n$, due to the continuity of volume under the Hasdorff metric. 
\end{proof}

Now we provide some basic background for the $p$-capacity, and all results can be found in e.g. \cite{Cole2015, Cole2003,EG2008}. 
Let $C^\infty_c(\mathbb{R}^{n})$ be the set of all infinitely differentiable functions on $\mathbb{R}^n$ with compact supports. For $x\in\mathbb{R}^n$, we use $|x|$ to mean the Euclidean norm of $x.$ For a compact subset $E\subseteq\mathbb{R}^n$ and $1<p<n$, define $C_p(E)$, the $p$-capacity of $E$, by
$$
C_p(E)=\inf\left\{\int_{\mathbb{R}^{n}}|\nabla f(x)|^p\, dx:  \,f\in C^\infty_c(\mathbb{R}^{n}) \ \text{and} \ f(x)\geq 1 \ \text{on} \ x\in E\right\},$$ where $\nabla f$ denotes the gradient of $f$. 
  Clearly,  for two compact sets $E\subseteq F\subseteq \mathbb{R}^n,$ one has $C_p(E)\leq C_p(F).$ Moreover, for any compact set $E\subseteq \mathbb{R}^n$, $C_p(\lambda E)=\lambda^{n-p}C_p(E)$ for any  $\lambda>0,$ and $C_p(E+x_0)=C_p(E)$ for any $x_0 \in \mathbb{R}^{n}$.  Besides, the functional $C_p(\cdot)$ is continuous on $\mathcal{K}_0$ with respect to the Hausdorff metric.

 By the $p$-Laplace equation for $p\in (1, n)$, we mean the equation $\text{div}(|\nabla U|^{p-2}\nabla U)=0,$ with $\text{div}(\cdot)$ the divergence of vector fields in $\mathbb{R}^n$.  The $p$-Laplace equation has a fundamental solution $U_0(x)=|x|^{\frac{p-n}{p-1}}(x\neq o).$
For $K\in \mathcal{K}_0$, the $p$-capacitary function of $K$ is a weak solution of the following $p$-Laplace equation with the boundary conditions:
\begin{equation}
\begin{cases}
\text{div}(|\nabla U|^{p-2}\nabla U)=0,\  \ \ \ \text{in} \ \ \ \mathbb{R}^{n}\setminus K,\\  U(x)=1, \qquad\qquad\quad \ \ \ \ \ \text{on} \ \ \ \partial K,\\ \lim_{| x | \rightarrow \infty}U(x)=0.
\end{cases}\label{formula 2.9}
\end{equation}
It has been proved that there exists a unique solution $U_K$ to (\ref{formula 2.9}) such that
$$
C_p(K)=\int_{\mathbb{R}^{n}\setminus K}|\nabla U_K(x)|^p \,dx
$$
and $U_K\in C(\mathbb{R}^{n}\setminus \text{int}K)\cap C^\infty (\mathbb{R}^{n}\setminus K)$ (see more details in \cite{Cole2003}).

Let $K\in\mathcal{K}_0.$ Define $\mu_p(K,\cdot)$, the $p$-capacitary measure of $K$ on $S^{n-1}$, by (see e.g. \cite[(1.11)]{Cole2015})
\be\label{measure-38}
\mu_p(K,A)=\int_{\nu_K^{-1}(A)}|\nabla U_K(x)|^p \,d\mathcal{H}^{n-1}(x) \ \  \text{ for any measurable subset}\, A\subseteq S^{n-1},\ee
where $U_K$ is the $p$-capacitary function of $K$.
For any $\lambda>0$, one can easily get
$$\mu_p(\lambda K,\cdot)=\lambda^{n-p-1}\mu_p(K,\cdot)\ \ \text{on}\ S^{n-1}.$$  
The translation invariance of $C_p(\cdot)$ yields that for any $K\in\mathcal{K}_0$, the centroid of $\mu_p(K,\cdot)$ is at the origin, i.e.,
$$\int_{S^{n-1}}u d\mu_p(K,u)=o.$$ Moreover, $\mu_p(K,\cdot)$ is not concentrated on any hemisphere of $S^{n-1}$ \cite[Theorem 1]{XZ}, i.e.,
\be\nonumber
\int_{S^{n-1}}\langle v,u \rangle_+\, d\mu_p(K,u)>0\,\,\,\,\text{for any} \ v\in S^{n-1},
\ee where $a_+=\max\{a, 0\}$ for all $a\in \mathbb{R}$. 
The $p$-capacity of $K\in\mathcal{K}_0$ can be calculated by the famous Poincar\'{e} formula (see e.g. \cite{Cole2015}):
\be\label{measure-8}
C_p(K)=\displaystyle{\frac{p-1}{n-p}}\int_{S^{n-1}}h_K(u)\, d\mu_p(K,u).
\ee In particular,  for any $p\in(1,n)$,
\be \nonumber C_p(B^n_2)=\bigg(\displaystyle{\frac{n-p}{p-1}}\bigg)^{p-1}\cdot n\omega_n.\ee
From (\ref{measure-8}), for any $K\in\mathcal{K}_0$, one can define a probability measure $\mu^*_p(K,\cdot)$ on $S^{n-1}$ by 
\be\label{measure-37}
\,d\mu^*_p(K, u)= \bigg(\frac{p-1}{n-p}\bigg)  \cdot \bigg( \frac{h_K(u)}{C_p(K)}\bigg)\cdot \,d\mu_p(K,u) \ \text{for all} \ u\in S^{n-1}.
\ee According to \cite[(8.9)]{Maz2003},  one has the following isocapacitary inequality:
\be\label{measure-36}C_p(K)\geq n {\omega_n}^{p/n}\bigg(\frac{n-p}{p-1}\bigg)^{p-1} |K|^{(n-p)/n}\ee
holds for any $K\in\mathcal{K}_0$.  

The $p$-capacity and the volume belong to a large family of functionals defined on $\mathcal{K}_0$. Such a family of functionals will be called the variational functionals compatible with the mixed volume \cite{HYZ2017} (see also \cite{Cole2005,Cole2010}). We summarize its definition below.
\begin{definition}\label{definition 2.1}
A variational functional $\mathcal{V}:\mathcal{K}_0\rightarrow(0,\infty)$ is said to be compatible with the mixed volume if $\mathcal{V}$ satisfies the following properties: 

\noindent (i) homogeneous, i.e., there exists a constant $\alpha\neq 0$ such that $\mathcal{V}(\lambda K)=\lambda^{\alpha}\mathcal{V}(K)$ for any $\lambda>0$ and any $K\in\mathcal{K}_0$;

\noindent (ii) translation invariant, i.e., $\mathcal{V}(K+x)=\mathcal{V}(K)$ for any $x\in\mathbb{R}^n$ and any $K\in\mathcal{K}_0$;

\noindent (iii) monotone increasing, i.e., $\mathcal{V}^{\frac{1}{\alpha}}(K)\leq\mathcal{V}^{\frac{1}{\alpha}}(L)$ if $K\subseteq L$;

\noindent (iv) the Brunn-Minkowski inequality, i.e., for any $\lambda\in[0,1]$ and $K,L\in\mathcal{K}_0$,
$$\mathcal{V}^{\frac{1}{\alpha}}(\lambda K+(1-\lambda)L)\geq\lambda\mathcal{V}^{\frac{1}{\alpha}}(K)+(1-\lambda)\mathcal{V}^{\frac{1}{\alpha}}(L)$$
with equality if and only if $K$ and $L$ are homothetic to each other;

\noindent (v) there exists a nonzero finite measure $S_\mathcal{V}(K,\cdot)$ on $S^{n-1}$,  such that, for any $L\in\mathcal{K}_0$,
$$\displaystyle{\frac{1}{\alpha}}\cdot\lim_{\epsilon\rightarrow 0^+}\frac{\mathcal{V}(K+\epsilon L)-\mathcal{V}(K)}{\epsilon}=\int_{S^{n-1}}h_L(u)\,dS_\mathcal{V}(K,u),$$
and $S_\mathcal{V}(K,\cdot)$ is not concentrated on any hemisphere of $S^{n-1}$. Moreover, the convergence of $K_i\rightarrow K$ with respect to the Hausdorff metric implies that $S_\mathcal{V}(K_i,\cdot)$ converges weakly to $S_\mathcal{V}(K,\cdot)$.

\end{definition}
Besides the $p$-capacity and the volume, there are many other functionals on $\mathcal{K}_0$ satisfying the conditions in Definition \ref{definition 2.1}, such as $\tau(K),$ the torsional rigidity of $K$, whose definition is given by (see e.g. \cite{Cole2005}):
$$\frac{1}{\tau(K)}=\inf\bigg\{\displaystyle{\frac{\int_{K}|\nabla u(x)|^2\,dx}{(\int_{K}|u(x)|\,dx)^2}}: \ \  u\in W^{1,2}_0(\text{int}K)\ \text{and}\ \int_{K}|u(x)|\,dx>0\bigg\},$$
where $W^{1,2}(\text{int}K)$ refers to the Sobolev space of the functions in $L^2(\text{int}K)$ whose first order weak derivatives belong to $L^2(\text{int}K),$ and $W^{1,2}_0(\text{int}K)$ denotes the closure of $C^\infty_c(\text{int}K)$ in  the Sobolev space $W^{1,2}(\text{int}K).$
By the definition of torsional rigidity, one can easily get that $\tau(K)\leq\tau(L)$ if $K\subseteq L.$ Moreover, for any $K\in\mathcal{K}_0,$
$$
\tau(\lambda K)=\lambda^{n+2}\tau(K) \ \ \text{for any} \ \lambda>0 \ \ \text{and}\ \ \tau( K+x)=\tau(K) \ \ \text{for any} \ x\in\mathbb{R}^n.
$$
The torsional rigidity satisfies the Brunn-Minkowski inequality, i.e., for any $\lambda\in[0,1]$ and $K,L\in\mathcal{K}_0$,
$$\tau^{\frac{1}{n+2}}(\lambda K+(1-\lambda)L)\geq\lambda\tau^{\frac{1}{n+2}}(K)+(1-\lambda)\tau^{\frac{1}{n+2}}(L)$$
with equality if and only if $K$ and $L$ are homothetic to each other; please refer to \cite{Cole2005} for more details. For any $K\in\mathcal{K}_0,$ there exists a unique solution $U_{\tau, K}\in C^\infty(\text{int}K) \cap C(K)$ to the following boundary value equation:
\begin{equation*}
\begin{cases}
\text{div}(\nabla U)=-2\  \,\,\,\text{in} \,\,\text{int}K,\\U(x)=0 \qquad\ \ \ \ \text{on} \,\,\partial K.
\end{cases}
\end{equation*} 
One can define $\mu_{\tau}(K,\cdot),$ a nonnegative Borel measure on $S^{n-1},$ as follows (see e.g. \cite{Cole2010}): for any measurable subset $A\subseteq S^{n-1}$,
$$
\mu_{\tau}(K,A)=\int_{\nu_K^{-1}(A)}|\nabla U_{\tau, K}(x)|^2 \,d\mathcal{H}^{n-1}(x).
$$
It follows from \cite[Corollary 1]{Cole2010} and \cite[Theorem 6]{Cole2010} that the measure $\mu_{\tau}(K,\cdot)$ satisfies condition (v) in Definition \ref{definition 2.1}.
\section{The polar Orlicz-Minkowski problems}\label{section:3}

 In this section, we prove that the polar Orlicz-Minkowski problem (\ref{Orlicz-Minkowski problem}) is solvable under the assumptions $\varphi\in\mathcal{I}$ and $\mu\in\Omega.$ Recall that $\Omega$  is the set of all nonzero finite Borel measures on $S^{n-1}$ that are not concentrated on any hemisphere of $S^{n-1}$, namely, for any $\mu\in \Omega$ and for any $v\in S^{n-1}$, one has $$\int_{S^{n-1}}\langle u, v\rangle _+ \,d\mu(u)>0.$$  The continuity of the solutions to  the polar Orlicz-Minkowski problems is also provided. For convenience, if $\varphi\in\mathcal{I}$, let
\begin{equation}\label{polar Orlicz-Minkowski problem 3.2}
\widehat{\mathcal{G}}_{\varphi}(\mu)=\inf \bigg\{\int_{S^{n-1}}\varphi\big( h_L \big) \,d \mu: L \in \mathcal{K}_{0} \ \text{and}\   |L^\circ|=\omega_{n}\bigg\}.
\end{equation}
Clearly, as $\varphi(1)=1$,
$$\widehat{\mathcal{G}}_{\varphi}(\mu)\leq \int_{S^{n-1}}\varphi\big( h_{B^n_2} \big) \,d \mu\leq\mu(S^{n-1})<\infty.$$
Moreover, due to $|(\text{vrad}(L^\circ)L)^\circ|=\Big|\displaystyle{\frac{L^\circ}{\text{vrad}(L^\circ)}}\Big|=\omega_n$ and $h_{\text{vrad}(L^\circ)L}=\text{vrad}(L^\circ)h_L$ for any $L\in\mathcal{K}_0,$ one has
$$\widehat{\mathcal{G}}_{\varphi}(\mu)=\inf_{L\in \mathcal{K}_{0}}\bigg\{\int_{S^{n-1}}\varphi\big( \text{vrad}(L^\circ)h_L \big) \,d \mu\bigg\}.$$

\begin{theorem} \label{Theorem3.1}
\noindent Let $\mu\in\Omega$  and $\varphi \in \mathcal{I}$. Then there exists a convex body $M\in\mathcal{K}_0$ such that $|M^\circ|=\omega_{n}$ and
$$\widehat{\mathcal{G}}_{\varphi}(\mu)=\int_{S^{n-1}}\varphi\big( h_M \big) \,d \mu .$$
Moreover, if $\varphi\in\mathcal{I}$ is convex, then $M$ is the unique solution to the polar Orlicz-Minkowski problem (\ref{polar Orlicz-Minkowski problem 3.2}).
\end{theorem}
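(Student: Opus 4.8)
The plan is to carry out the direct method in the calculus of variations. First I would take a minimizing sequence $\{L_i\}_{i=1}^\infty\subseteq\mathcal{K}_0$ with $|L_i^\circ|=\omega_n$ and $\int_{S^{n-1}}\varphi(h_{L_i})\,d\mu\to\widehat{\mathcal{G}}_\varphi(\mu)$. The first task is to show this sequence is uniformly bounded, so that Lemma \ref{blaschke-selection} applies and yields a subsequence converging to some $M\in\mathcal{K}_0$ with $|M^\circ|=\omega_n$. Suppose, for contradiction, that $\{L_i\}$ is not uniformly bounded; then (after passing to a subsequence) there are points $x_i\in L_i$ with $R_i:=|x_i|\to\infty$. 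Writing $v_i=x_i/R_i\in S^{n-1}$ and passing to a further subsequence so that $v_i\to v\in S^{n-1}$, the segment $[o,x_i]\subseteq L_i$ forces $h_{L_i}(u)\geq R_i\langle u,v_i\rangle_+$ for all $u\in S^{n-1}$. Since $\varphi\in\mathcal{I}$ is strictly increasing with $\lim_{t\to\infty}\varphi(t)=\infty$, one gets $\int_{S^{n-1}}\varphi(h_{L_i})\,d\mu\geq\int_{S^{n-1}}\varphi\big(R_i\langle u,v_i\rangle_+\big)\,d\mu(u)$, and a Fatou-type argument using $\mu\in\Omega$ (so that $\langle u,v\rangle_+>0$ on a set of positive $\mu$-measure) shows this blows up, contradicting convergence to the finite value $\widehat{\mathcal{G}}_\varphi(\mu)$. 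This is the main obstacle: the boundedness/coercivity of the minimizing sequence is exactly where the hypothesis $\mu\in\Omega$ and the growth condition $\lim_{t\to\infty}\varphi(t)=\infty$ are essential, and one must handle the $\langle u,v_i\rangle_+$ term carefully (e.g. split $S^{n-1}$ near and far from the equator $v^\perp$, or invoke uniform integrability / the standard hemisphere argument as in the classical Minkowski problem).

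Once uniform boundedness is in hand, Lemma \ref{blaschke-selection} gives a subsequence $L_{i_j}\to M$ with $M\in\mathcal{K}_0$ and $|M^\circ|=\omega_n$. Then $h_{L_{i_j}}\to h_M$ uniformly on $S^{n-1}$ (Hausdorff convergence), hence $\varphi(h_{L_{i_j}})\to\varphi(h_M)$ uniformly since $\varphi$ is continuous and the supports are in a fixed compact subinterval of $(0,\infty)$ — note $h_M>0$ because $o\in\mathrm{int}\,M$, so $\inf_j\inf_{S^{n-1}}h_{L_{i_j}}>0$ after possibly passing to a further subsequence. Applying Lemma \ref{uniformly converge-lemma} (with the fixed measure $\mu$), we conclude
$$
\int_{S^{n-1}}\varphi(h_M)\,d\mu=\lim_{j\to\infty}\int_{S^{n-1}}\varphi(h_{L_{i_j}})\,d\mu=\widehat{\mathcal{G}}_\varphi(\mu),
$$
so $M$ is an optimizer. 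Since $|M^\circ|=\omega_n$, $M$ is admissible and thus realizes the infimum.

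For uniqueness under the additional assumption that $\varphi\in\mathcal{I}$ is convex, suppose $M_1,M_2\in\mathcal{K}_0$ both attain $\widehat{\mathcal{G}}_\varphi(\mu)$ with $|M_1^\circ|=|M_2^\circ|=\omega_n$. Set $M_0=\tfrac12 M_1+\tfrac12 M_2$, so $h_{M_0}=\tfrac12 h_{M_1}+\tfrac12 h_{M_2}$. Convexity of $\varphi$ gives pointwise $\varphi(h_{M_0})\leq\tfrac12\varphi(h_{M_1})+\tfrac12\varphi(h_{M_2})$, hence $\int\varphi(h_{M_0})\,d\mu\leq\widehat{\mathcal{G}}_\varphi(\mu)$. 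On the other hand, the dual Brunn–Minkowski inequality for polar bodies (equivalently, the Brunn–Minkowski inequality applied to the convex body volume of polars, since $M\mapsto|M^\circ|^{-1/n}$ is concave with respect to Minkowski addition — this is the Firey/Lutwak observation used in defining geominimal surface areas) yields $|M_0^\circ|\leq\omega_n$; rescaling $M_0$ by $\mathrm{vrad}(M_0^\circ)\leq 1$ produces an admissible competitor $\widetilde M_0=\mathrm{vrad}(M_0^\circ)^{-1}M_0$ with $h_{\widetilde M_0}\leq h_{M_0}$, so that $\int\varphi(h_{\widetilde M_0})\,d\mu\leq\int\varphi(h_{M_0})\,d\mu\leq\widehat{\mathcal{G}}_\varphi(\mu)$, forcing equality throughout. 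Equality in the pointwise convexity bound ($\mu$-a.e.) together with $\mu\in\Omega$ (whose support is large enough to determine a convex body) forces $h_{M_1}=h_{M_2}$ $\mu$-a.e., and then strict convexity of the argument — more precisely, the equality condition combined with $|M_1^\circ|=|M_2^\circ|=\omega_n$ and $\mathrm{vrad}(M_0^\circ)=1$ — forces $M_1=M_2$.
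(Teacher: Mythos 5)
Your overall strategy coincides with the paper's: a minimizing sequence, the hemisphere/Fatou argument to rule out unboundedness, Lemma \ref{blaschke-selection} plus uniform convergence of support functions to pass to the limit, and the midpoint-body argument for uniqueness. The existence half is essentially correct; the paper executes your ``handle $\langle u,u_i\rangle_+$ carefully'' step by bounding $\varphi(R_i\langle u,u_i\rangle_+)\ge\varphi(C\langle u,u_i\rangle_+)$ for a fixed constant $C$, applying Fatou to the $C$-truncated integrand (which does converge pointwise as $u_i\to v$), and then letting $C\to\infty$ while using (\ref{3.2'}); you should spell this out, but the idea is the one you indicate.

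The uniqueness half has two concrete problems. First, the rescaling goes the wrong way: since $|(\lambda L)^\circ|=\lambda^{-n}|L^\circ|$, the admissible competitor built from $M_0=\frac12(M_1+M_2)$ is $\mathrm{vrad}(M_0^\circ)\cdot M_0$, not $\mathrm{vrad}(M_0^\circ)^{-1}M_0$; your $\widetilde M_0$ does not satisfy $|\widetilde M_0^\circ|=\omega_n$, and the inequality $h_{\widetilde M_0}\le h_{M_0}$ you invoke is reversed (what holds is $\mathrm{vrad}(M_0^\circ)\,h_{M_0}\le h_{M_0}$, since $\mathrm{vrad}(M_0^\circ)\le1$). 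Second, you cannot conclude $h_{M_1}=h_{M_2}$ $\mu$-a.e.\ from equality in the pointwise convexity bound: $\varphi$ is only assumed convex (e.g.\ $\varphi(t)=t$ lies in $\mathcal{I}$ and is convex but not strictly convex), and even $\mu$-a.e.\ equality of support functions would not identify $M_1$ with $M_2$ when $\mu$ is, say, discrete. The correct closing move --- which you gesture at but do not isolate --- is the one the paper uses: equality throughout the chain forces $\int\varphi(\mathrm{vrad}(M_0^\circ)h_{M_0})\,d\mu=\int\varphi(h_{M_0})\,d\mu$, whence $\mathrm{vrad}(M_0^\circ)=1$ by the strict monotonicity of $\varphi$; and the statement $\mathrm{vrad}(M_0^\circ)\le1$ with equality if and only if $M_1=M_2$ comes from the strict convexity of $t\mapsto t^{-n}$ applied to $\rho_{M_0^\circ}=1/h_{M_0}$ via (\ref{measure-3}) and (\ref{measure-4}). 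So uniqueness ultimately rests on the strict convexity of $t^{-n}$, not on any strictness of $\varphi$.
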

\begin{proof} The fact that $\mu$ is not concentrated on any hemisphere of $S^{n-1},$ together with the monotone convergence theorem, implies that, for any given $v\in S^{n-1}$, 
$$\lim_{j\rightarrow \infty}\int_{\{u\in S^{n-1}:\,\langle u,v \rangle_+\geq\frac{1}{j}\}}\langle u,v \rangle_+ \,d \mu(u)=\int_{S^{n-1}}\langle u,v \rangle_+\,d \mu(u)>0.$$
Thus, there exists an integer $j_0\geq1$ such that
\begin{equation}\label{3.2'}
\int_{\{u\in S^{n-1}:\,\langle u,v \rangle_+\geq\frac{1}{j_0}\}}\langle u,v \rangle_+ \,d \mu(u)>0.\end{equation}

Let $\{M_i\}_{i=1}^\infty\subseteq \mathcal{K}_0$ be a sequence of convex bodies such that $|M^\circ_i|=\omega_{n}$ for any $i\geq 1$ and
\be \label{measure-34}
\int_{S^{n-1}}\varphi\big( h_{M_i}\big) \,d \mu\rightarrow \widehat{\mathcal{G}}_{\varphi}(\mu)<\infty.
\ee Let $R_i=\rho_{M_i}(u_i)=\max_{u\in S^{n-1}}\{\rho_{M_i}(u)\}$. Obviously, $
h_{M_i}(u) \geq R_i \cdot \langle u,u_i \rangle_+ $ for any $u \in S^{n-1}
$ and any $i\geq1.$ Since $S^{n-1}$ is compact, we can assume $u_i\rightarrow v\in S^{n-1}$ as $i\rightarrow \infty$. 
We now prove $\sup_{i\geq 1} R_i<\infty$. This will follow if we can get a contradiction by assuming $R_{i} \rightarrow \infty$ as $i\rightarrow \infty$ (or, more precisely, some subsequence $R_{i_j}\rightarrow\infty$ as $j\rightarrow \infty$). To this end, by the monotonicity of $\varphi,$ Fatou's Lemma,  and  $h_{M_i}(u) \geq R_i \cdot \langle u,u_i \rangle_+$, one has, for any positive constant $C>0$,
\begin{eqnarray*}
\nonumber \widehat{\mathcal{G}}_{\varphi}(\mu)&=& \lim_{i\rightarrow\infty}\int_{S^{n-1}}\varphi\big( h_{M_i} \big) \,d \mu\\ \nonumber
&\geq& \liminf_{i\rightarrow \infty}\int_{S^{n-1}}\varphi\big( R_i \cdot \langle u,u_i \rangle_+ \big) \,d \mu(u)\\ \nonumber
&\geq& \liminf_{i\rightarrow \infty}\int_{S^{n-1}}\varphi\big( C\cdot \langle u,u_i \rangle_+ \big) \,d \mu(u)\\ \nonumber
&\geq& \int_{S^{n-1}}\liminf_{i\rightarrow \infty}\varphi\big( C\cdot \langle u,u_i \rangle_+ \big) \,d \mu(u)\\ \nonumber
&=&\int_{S^{n-1}}\varphi\big( C\cdot \langle u,v \rangle_+ \big) \,d \mu(u)\\
&\geq& \varphi\left( \displaystyle{\frac{C}{j_0}} \right)\,\cdot\int_{\{u\in S^{n-1}:\,\langle u,v \rangle_+\geq\frac{1}{j_0}\}}\langle u,v \rangle_+ \,d\mu(u).\nonumber
\end{eqnarray*}
Letting $C\rightarrow \infty$, it follows from (\ref{3.2'}) and $\varphi\in \mathcal{I}$ that $\widehat{\mathcal{G}}_{\varphi}(\mu)\geq\infty$, which is impossible. Hence $\sup_{i\geq 1} R_i<\infty$ and $\{M_i\}_{i=1}^\infty$ is bounded.

By Lemma \ref{blaschke-selection}, there exists a subsequence of $\{M_i\}_{i=1}^\infty$ which converges to some convex body $M\in\mathcal{K}_0$ with $|M^\circ|=\omega_{n}.$ Without loss of generality, we assume $M_i\rightarrow M$ as $i\rightarrow \infty$. Thus there exist two positive constants $r_0$ and $R_0$ such that for any $i\geq1$ and any $u\in S^{n-1}$,
$$r_0\leq h_{M_i}(u),h_M(u)\leq R_0.$$ From the fact that $\varphi$ is continuous on $[r_0,R_0]$ and the dominated convergence theorem, one has
$$\int_{S^{n-1}}\varphi\big( h_{M_i} \big) \,d \mu\rightarrow \int_{S^{n-1}}\varphi\big( h_{M} \big) \,d \mu.$$
Together with (\ref{measure-34}), one has
$$\widehat{\mathcal{G}}_{\varphi}(\mu)=\int_{S^{n-1}}\varphi\big( h_{M} \big) \,d \mu.$$
Hence $M$ is a solution to the polar Orilicz-Minkowski problem (\ref{polar Orlicz-Minkowski problem 3.2}).

For the uniqueness, let $M_1$ and $M_2$ be two convex bodies such that $|{M^\circ_1}|=|{M^\circ_2}|=\omega_{n}$ and
$$\widehat{\mathcal{G}}_{\varphi}(\mu)=\int_{S^{n-1}}\varphi\big( h_{M_1} \big) \,d \mu=\int_{S^{n-1}}\varphi\big( h_{M_2} \big) \,d \mu .$$
Let $M_{0}=\displaystyle{\frac{M_1+M_2}{2}}.$ Clearly, due to the fact that $t^{-n}$ is strictly convex, (\ref{measure-3}) and (\ref{measure-4}), $\text{vrad}(M^\circ_{0})\leq 1$ with $\text{vrad}(M^\circ_{0})=1$ if and only if $M_1=M_2.$ By the strict monotonicity of $\varphi$ and the fact that $\varphi$ is convex, one has
\begin{eqnarray*}
\widehat{\mathcal{G}}_{\varphi}(\mu)&\leq&\int_{S^{n-1}}\varphi\big(\text{vrad}(M^\circ_{0})\cdot h_{M_{0}} \big) \,d \mu\\&\leq& \int_{S^{n-1}}\varphi\big( h_{M_{0}} \big) \,d \mu\\&=&\int_{S^{n-1}}\varphi\bigg( \frac{h_{M_1}+h_{M_2}}{2} \bigg) \,d \mu\\&\leq& \int_{S^{n-1}} \frac{\varphi\big(h_{M_1}\big)+\varphi\big(h_{M_2}\big)}{2} \,d \mu\\&=&\widehat{\mathcal{G}}_{\varphi}(\mu).
\end{eqnarray*}
This implies $\text{vrad}(M^\circ_{0})=1$ and hence $M_1=M_2$.
\end{proof}

The following proposition states that the solutions to the polar Orlicz-Minkowski problem (\ref{polar Orlicz-Minkowski problem 3.2}) for discrete measures must be polytopes.
\begin{proposition} \label{proposition3.1}
Let $\varphi\in\mathcal{I}$ and $\mu\in \Omega$ be a discrete measure on $S^{n-1}$ whose support $\{u_1,u_2,\cdots, u_m\}\subseteq S^{n-1}$ is not concentrated on any hemisphere of  $S^{n-1}.$ If $M\in\mathcal{K}_0$ is a solution of the polar Orlicz-Minkowski problem (\ref{polar Orlicz-Minkowski problem 3.2}) for $\mu$, then $M$ is a polytope with $u_1,u_2,\cdots, u_m$ being the unit normal vectors of its faces.
\end{proposition}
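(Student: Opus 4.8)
The plan is to show that the optimizer $M$ must coincide with the ``Wulff-shape'' polytope
\[
P:=\bigcap_{i=1}^{m}\bigl\{x\in\mathbb{R}^{n}:\langle x,u_{i}\rangle\le h_{M}(u_{i})\bigr\}
\]
cut out by the support directions of $\mu$ at the heights prescribed by $M$. First I would check that $P\in\mathcal{K}_{0}$ and that $P$ is a polytope whose facet normals lie among $u_{1},\dots,u_{m}$. Since $o\in\mathrm{int}\,M$, we have $h_{M}(u_{i})>0$ for each $i$, hence $o\in\mathrm{int}\,P$; and $P$ is bounded because $\{u_{1},\dots,u_{m}\}$ is not contained in any closed hemisphere. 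Indeed, if $P$ were unbounded it would contain a ray $\{td:t\ge 0\}$ with $d\ne o$, which would force $\langle d,u_{i}\rangle\le 0$ for all $i$, placing the whole support in the closed hemisphere $\{u:\langle u,d\rangle\le 0\}$ and contradicting $\mu\in\Omega$. A bounded intersection of finitely many closed halfspaces with the origin in its interior is a polytope in $\mathcal{K}_{0}$.

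Next I would record two consequences of $M\subseteq P\subseteq\{x:\langle x,u_{i}\rangle\le h_{M}(u_{i})\}$: namely $h_{P}(u_{i})=h_{M}(u_{i})$ for every $i$, and $P^{\circ}\subseteq M^{\circ}$, so $|P^{\circ}|\le|M^{\circ}|=\omega_{n}$ and $\lambda:=\mathrm{vrad}(P^{\circ})\le 1$. Since $\mu$ is discrete with support $\{u_{1},\dots,u_{m}\}$, the functional $L\mapsto\int_{S^{n-1}}\varphi(h_{L})\,d\mu=\sum_{i=1}^{m}\mu(\{u_{i}\})\varphi(h_{L}(u_{i}))$ depends on $L$ only through $h_{L}(u_{1}),\dots,h_{L}(u_{m})$; in particular $\int_{S^{n-1}}\varphi(h_{P})\,d\mu=\widehat{\mathcal{G}}_{\varphi}(\mu)$. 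Now I would test the admissible competitor $Q:=\lambda P$, which satisfies $|Q^{\circ}|=\lambda^{-n}|P^{\circ}|=\omega_{n}$ and $h_{Q}=\lambda h_{P}$. Using that $\varphi$ is increasing,
\[
\widehat{\mathcal{G}}_{\varphi}(\mu)\le\int_{S^{n-1}}\varphi(h_{Q})\,d\mu=\sum_{i=1}^{m}\mu(\{u_{i}\})\,\varphi\bigl(\lambda h_{P}(u_{i})\bigr)\le\sum_{i=1}^{m}\mu(\{u_{i}\})\,\varphi\bigl(h_{P}(u_{i})\bigr)=\widehat{\mathcal{G}}_{\varphi}(\mu),
\]
so equality holds throughout. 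Because every $\mu(\{u_{i}\})>0$, the strict monotonicity of $\varphi$ then forces $\lambda h_{P}(u_{i})=h_{P}(u_{i})$ for each $i$, and since $h_{P}(u_{i})\ge h_{M}(u_{i})>0$ this gives $\lambda=1$.

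Finally, $\lambda=1$ means $|P^{\circ}|=\omega_{n}=|M^{\circ}|$ while $P^{\circ}\subseteq M^{\circ}$; a convex body contained in another of equal volume must equal it, so $P^{\circ}=M^{\circ}$, whence $M=M^{\circ\circ}=P^{\circ\circ}=P$. Thus $M$ is a polytope whose facets have normals among $u_{1},\dots,u_{m}$, as asserted. The only points needing more than routine verification are the boundedness of $P$ — the single place where the ``not concentrated on any hemisphere'' hypothesis is genuinely used — and the elementary fact that nested convex bodies of equal volume coincide; everything else follows directly from the strict monotonicity of $\varphi\in\mathcal{I}$ together with the polar normalization $|L^{\circ}|=\omega_{n}$.
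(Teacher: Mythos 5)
Your proof is correct and follows essentially the same route as the paper: form the polytope $P=\bigcap_i\{x:\langle x,u_i\rangle\le h_M(u_i)\}$, note $h_P(u_i)=h_M(u_i)$ and $P^\circ\subseteq M^\circ$, test the rescaled competitor $\mathrm{vrad}(P^\circ)P$, and use strict monotonicity of $\varphi$ to force $\mathrm{vrad}(P^\circ)=1$ and hence $M=P$. The only difference is that you explicitly verify that $P$ is a bounded polytope in $\mathcal{K}_0$ (via the non-concentration hypothesis), a point the paper takes for granted.
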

\begin{proof}
Let $P$ be a polytope  with $u_1,u_2,\cdots, u_m$ being the unit normal vectors of its faces such that \begin{equation}\label{cir-12-22} P=\bigcap_{1\leq i\leq m} \big\{x\in \mathbb{R}^n: \langle x, u_i\rangle\leq h_M(u_i)\big\}. \end{equation}  Thus, $h_P(u_i)=h_M(u_i)$ $(1\leq i \leq m),$ $P^\circ \subseteq M^\circ$ (as $M\subseteq P$) and $\text{vrad}(P^\circ)\leq \text{vrad}(M^\circ)=1$.
It follows from the fact that $\varphi$ is strictly increasing that
\begin{eqnarray*}
\inf_{L\in \mathcal{K}_{0}}\bigg\{\int_{S^{n-1}}\varphi\big( \text{vrad}(L^\circ)h_{L} \big) \,d\mu\bigg\}&\leq & \int_{S^{n-1}}\varphi\big( \text{vrad}(P^\circ)h_{P} \big)\, d\mu\\ &\leq& \int_{S^{n-1}}\varphi\big( h_{P} \big)\,d\mu\\
&=&\sum^m_{i=1}\varphi\left( h_{P}(u_i)\right)\cdot\mu(\{u_i\})\\
&=&\sum^m_{i=1}\varphi\left( h_{M}(u_i)\right)\cdot\mu(\{u_i\})\\
&=&\int_{S^{n-1}}\varphi\big( h_{M} \big)\,d\mu\\&=&\inf_{L\in \mathcal{K}_{0}}\bigg\{\int_{S^{n-1}}\varphi\big(\text{vrad}(L^\circ) h_{L} \big) \,d\mu\bigg\}.
\end{eqnarray*}
This shows $\text{vrad}(P^\circ)=\text{vrad}(M^\circ)=1$ and hence $M=P$.
\end{proof}
Note that if $\varphi\in\mathcal{I}$ is not convex, then the solutions to the polar Orlicz-Minkowski problem (\ref{polar Orlicz-Minkowski problem 3.2}) may not be unique. We use $\mathcal{M}_{\varphi}(\mu)$ for the set of all convex bodies satisfying the polar Orlicz-Minkowski problem (\ref{polar Orlicz-Minkowski problem 3.2}) for $\mu\in\Omega$. When $\varphi\in\mathcal{I}$ is convex, $\mathcal{M}_\varphi(\mu)$ contains only one convex body, and hence $\mathcal{M}_\varphi(\cdot)$ defines an operator on $\Omega$.

The following theorem states the continuity of $\widehat{\mathcal{G}}_{\varphi}(\cdot)$ and $\mathcal{M}_\varphi(\cdot)$. 
\begin{theorem}\label{Theorem 3.2}
Let $\{\mu_i\}_{i=1}^\infty\subseteq \Omega$ and $\mu\in\Omega$ be such that $\mu_i$ converges weakly to $\mu$ as $i\rightarrow \infty$.

\noindent(i) If $\varphi \in \mathcal{I}$, then $\widehat{\mathcal{G}}_{\varphi}(\mu_i)\rightarrow\widehat{\mathcal{G}}_{\varphi}(\mu)$.

\noindent (ii) If $\varphi \in \mathcal{I}$ is convex, then $\mathcal{M}_{\varphi}(\mu_i)\rightarrow\mathcal{M}_{\varphi}(\mu)$.
\end{theorem}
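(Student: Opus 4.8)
The plan is to prove the two parts in order, using Theorem \ref{Theorem3.1} (existence of minimizers) as the key input. For part (i), the strategy is a two-sided estimate: $\limsup_i \widehat{\mathcal{G}}_\varphi(\mu_i) \le \widehat{\mathcal{G}}_\varphi(\mu)$ and $\liminf_i \widehat{\mathcal{G}}_\varphi(\mu_i) \ge \widehat{\mathcal{G}}_\varphi(\mu)$. For the $\limsup$ direction, fix the optimizer $M = M(\mu)$ supplied by Theorem \ref{Theorem3.1}, so $|M^\circ| = \omega_n$ and $\widehat{\mathcal{G}}_\varphi(\mu) = \int_{S^{n-1}} \varphi(h_M)\, d\mu$. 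Since $M$ is admissible for every $\mu_i$, we have $\widehat{\mathcal{G}}_\varphi(\mu_i) \le \int_{S^{n-1}} \varphi(h_M)\, d\mu_i$, and $\varphi(h_M) \in C(S^{n-1})$ together with $\mu_i \to \mu$ weakly gives $\int \varphi(h_M)\, d\mu_i \to \int \varphi(h_M)\, d\mu = \widehat{\mathcal{G}}_\varphi(\mu)$, yielding the $\limsup$ bound. For the $\liminf$ direction, pick a subsequence (not relabeled) along which $\widehat{\mathcal{G}}_\varphi(\mu_{i})$ converges to $\liminf_i \widehat{\mathcal{G}}_\varphi(\mu_i)$, and let $M_i = M(\mu_i)$ be the corresponding optimizers, $|M_i^\circ| = \omega_n$. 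The crucial point is that $\{M_i\}$ is uniformly bounded: this is exactly the argument already run inside the proof of Theorem \ref{Theorem3.1}, now applied with the varying measures $\mu_i$. One must be slightly careful here, so this is where I expect the main technical work to lie.

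For the uniform boundedness, let $R_i = \rho_{M_i}(u_i) = \max_{u} \rho_{M_i}(u)$ with $u_i \to v \in S^{n-1}$ after passing to a further subsequence, so $h_{M_i}(u) \ge R_i \langle u, u_i\rangle_+$. The obstacle compared to the single-measure case is that one needs a lower bound $\int_{\{\langle u,v\rangle_+ \ge 1/j_0\}} \langle u,v\rangle_+\, d\mu_i(u) \ge c > 0$ uniform in $i$; this follows from the weak convergence $\mu_i \to \mu$ applied to the continuous function $u \mapsto \langle u,v\rangle_+$ cut off suitably (or more simply, since $\mu \in \Omega$ and weak convergence preserves the non-concentration property in the limit, choose $j_0$ for $\mu$ and note $\int \langle u,v\rangle_+\, d\mu_i \to \int \langle u,v\rangle_+\, d\mu > 0$, so $\int \langle u,v\rangle_+\, d\mu_i \ge c > 0$ for large $i$; then bound below by the integral over $\{\langle u,v\rangle_+ \ge 1/j_0\}$ minus the small-value contribution, which is at most $\tfrac{1}{j_0}\mu_i(S^{n-1})$, and use $\mu_i(S^{n-1}) \to \mu(S^{n-1})$ to control it). If $R_i \to \infty$ along a subsequence, then by Fatou's lemma exactly as in Theorem \ref{Theorem3.1}, $\widehat{\mathcal{G}}_\varphi(\mu_i) = \int \varphi(h_{M_i})\, d\mu_i \ge \int \varphi(C\langle u,u_i\rangle_+)\, d\mu_i$ for any fixed $C$; applying Fatou plus the uniform lower bound gives $\liminf_i \widehat{\mathcal{G}}_\varphi(\mu_i) \ge \varphi(C/j_0)\cdot c \to \infty$ as $C \to \infty$, contradicting the $\limsup$ bound already established (which shows $\liminf_i \widehat{\mathcal{G}}_\varphi(\mu_i)$ is finite). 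Hence $\sup_i R_i < \infty$, and by Lemma \ref{blaschke-selection} a further subsequence $M_i \to M_*$ with $M_* \in \mathcal{K}_0$ and $|M_*^\circ| = \omega_n$. Then $h_{M_i} \to h_{M_*}$ uniformly, so $\varphi(h_{M_i}) \to \varphi(h_{M_*})$ uniformly (continuity of $\varphi$ on a compact interval $[r_0, R_0]$ containing all the values), and Lemma \ref{uniformly converge-lemma} gives $\int \varphi(h_{M_i})\, d\mu_i \to \int \varphi(h_{M_*})\, d\mu$. Therefore $\liminf_i \widehat{\mathcal{G}}_\varphi(\mu_i) = \int \varphi(h_{M_*})\, d\mu \ge \widehat{\mathcal{G}}_\varphi(\mu)$ since $M_*$ is admissible for $\mu$. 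Combining the two bounds proves (i).

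For part (ii), assume $\varphi \in \mathcal{I}$ is convex, so $\mathcal{M}_\varphi(\mu_i) = \{M_i\}$ and $\mathcal{M}_\varphi(\mu) = \{M\}$ are singletons; we must show $M_i \to M$ in the Hausdorff metric. It suffices to show every subsequence of $\{M_i\}$ has a further subsequence converging to $M$. Given a subsequence, run the argument from part (i): it is uniformly bounded (the $R_i \to \infty$ case is excluded because $\liminf \widehat{\mathcal{G}}_\varphi(\mu_i)$ is finite, and in fact by (i) the whole sequence $\widehat{\mathcal{G}}_\varphi(\mu_i) \to \widehat{\mathcal{G}}_\varphi(\mu) < \infty$), so by Lemma \ref{blaschke-selection} a further subsequence converges to some $M_* \in \mathcal{K}_0$ with $|M_*^\circ| = \omega_n$. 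By Lemma \ref{uniformly converge-lemma} and uniform convergence of $\varphi(h_{M_i})$, $\int \varphi(h_{M_i})\, d\mu_i \to \int \varphi(h_{M_*})\, d\mu$; the left side equals $\widehat{\mathcal{G}}_\varphi(\mu_i) \to \widehat{\mathcal{G}}_\varphi(\mu)$ by (i), so $\int \varphi(h_{M_*})\, d\mu = \widehat{\mathcal{G}}_\varphi(\mu)$, meaning $M_*$ is a minimizer for $\mu$. By the uniqueness clause of Theorem \ref{Theorem3.1} (using convexity of $\varphi$), $M_* = M$. Since every subsequence has a further subsequence converging to the same limit $M$, the full sequence converges: $M_i \to M$, i.e. $\mathcal{M}_\varphi(\mu_i) \to \mathcal{M}_\varphi(\mu)$. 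The only real obstacle throughout is the uniform-in-$i$ lower bound used to rule out $R_i \to \infty$; everything else is a routine compactness-plus-uniqueness packaging of the arguments already established for Theorem \ref{Theorem3.1}.
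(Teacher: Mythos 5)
Your proposal is correct and follows essentially the same route as the paper: the $\limsup$ bound from weak convergence and admissibility of the fixed optimizer $M$, uniform boundedness of the $M_i$ by contradiction, Blaschke selection plus Lemma \ref{blaschke-selection} and Lemma \ref{uniformly converge-lemma} for the $\liminf$ bound, and the subsequence-of-subsequence argument with the uniqueness clause of Theorem \ref{Theorem3.1} for part (ii). The only difference is cosmetic: where you seek a uniform-in-$i$ lower bound on $\int_{\{\langle u,v\rangle_+\geq 1/j_0\}}\langle u,v\rangle_+\,d\mu_i$ (and invoke a Fatou-type statement for varying measures, which needs the care you indicate), the paper simply applies Lemma \ref{uniformly converge-lemma} to $\varphi(C\langle\cdot,u_i\rangle_+)\to\varphi(C\langle\cdot,v\rangle_+)$ to compute $\lim_i\int\varphi(C\langle u,u_i\rangle_+)\,d\mu_i=\int\varphi(C\langle u,v\rangle_+)\,d\mu$ and then bounds that single limiting integral below, which is the cleaner version of the alternative you mention in passing.
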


\begin{proof}
Let $M\in\mathcal{M}_{\varphi}(\mu)$ and $M_i\in \mathcal{M}_{\varphi}(\mu_i)$ be convex bodies such that $|{M^\circ}|=|{M^\circ_i}|=\omega_{n}$ for any $i\geq 1$,  $$\widehat{\mathcal{G}}_{\varphi}(\mu)=\int_{S^{n-1}}\varphi\big( h_{M} \big) \,d \mu\ \ \text{and}\ \ \widehat{\mathcal{G}}_{\varphi}(\mu_i)=\int_{S^{n-1}}\varphi\big( h_{M_i}\big) \,d \mu_i .$$
The weak convergence of $\mu_i\rightarrow \mu$ yields
 \begin{eqnarray} \label{continuous-1"} \nonumber\widehat{\mathcal{G}}_{\varphi}(\mu)&=&\int_{S^{n-1}}\varphi\big( h_M \big) \,d \mu\\ \nonumber&=&\lim_{i\rightarrow \infty}\int_{S^{n-1}}\varphi\big( h_M \big) \,d \mu_i\\  \nonumber
 &=&\limsup_{i\rightarrow \infty}\int_{S^{n-1}}\varphi\big( h_M \big) \,d \mu_i\\
 &\geq& \limsup_{i\rightarrow \infty}\widehat{\mathcal{G}}_{\varphi}(\mu_i).
 \end{eqnarray}
Let $R_i,$ $u_i$ and $v$ be as in the proof of Theorem \ref{Theorem3.1}, i.e.,
$R_i=\rho_{M_i}(u_i)=\max_{u\in S^{n-1}}\{\rho_{M_i}(u)\},$  $u_i\rightarrow v$ as $i\rightarrow \infty$ and $h_{M_i}(u) \geq R_i \cdot \langle u,u_i \rangle_+$ for any $u \in S^{n-1}$ and any $i\geq 1$. 

Assume $\sup_{i\geq 1} R_i=\infty,$ and without loss of generality, let $R_i\rightarrow \infty.$ Since $\mu$ is not concentrated on any hemisphere of $S^{n-1}$, there exists an integer $j_0$ such that (\ref{3.2'}) holds. By the weak convergence of $\mu_i\rightarrow \mu,$ (\ref{continuous-1"}) and Lemma \ref{uniformly converge-lemma}, one gets, for any positive constant $C>0$,
\begin{eqnarray} \label{3.6}
\nonumber \widehat{\mathcal{G}}_{\varphi}(\mu)&=& \int_{S^{n-1}}\varphi\big( h_{M} \big) \,d \mu\\ \nonumber \\ &=&\lim_{i\rightarrow \infty} \int_{S^{n-1}}\varphi\big( h_{M} \big) \,d \mu_i\nonumber \\ &\geq& \liminf_{i\rightarrow\infty}\int_{S^{n-1}}\varphi\big( h_{M_i} \big) \,d \mu_i \nonumber\\ \nonumber  
&\geq&  \liminf_{i\rightarrow \infty}\int_{S^{n-1}}\varphi\big( R_i \cdot \langle u,u_i \rangle_+ \big) \,d \mu_i(u)\\ \nonumber
&\geq& \liminf_{i\rightarrow \infty}\int_{S^{n-1}}\varphi\big( C\cdot \langle u,u_i \rangle_+ \big) \,d \mu_i(u)\\ \nonumber
&=& \int_{S^{n-1}}\varphi\big( C\cdot \langle u,v \rangle_+ \big) \,d \mu(u)\\
&\geq& \varphi\left( \displaystyle{\frac{C}{j_0}} \right)\,\cdot\int_{\{u\in S^{n-1}:\,\langle u,v \rangle_+\geq\frac{1}{j_0}\}}\langle u,v \rangle_+ \,d\mu(u). \nonumber
\end{eqnarray}
This yields a contradiction $\widehat{\mathcal{G}}_{\varphi}(\mu)\geq \infty$ if let $C\rightarrow \infty.$ Hence $\sup_{i\geq 1} R_i<\infty$ and $\{M_{i}\}_{i=1}^\infty $ is bounded.

Let $\{M_{i_k}\}_{k=1}^\infty$ be any subsequence of $\{M_i\}_{i=1}^\infty.$ By the boundedness of $\{M_{i_k}\}_{k=1}^\infty $ and Lemma \ref{blaschke-selection}, one can find a subsequence $\{M_{i_{k_j}}\}_{j=1}^\infty$ of $ \{M_{i_k}\}_{k=1}^\infty$ and a convex body $M^\prime\in \mathcal{K}_0$ such that $M_{i_{k_j}}\rightarrow M^\prime$ as $j\rightarrow \infty$ and $|{(M^\prime)}^\circ|=\omega_{n}.$ Moreover, $\varphi(h_{M_{i_{k_j}}})\rightarrow \varphi(h_{M^\prime})$ uniformly on $S^{n-1}$.

\vskip 2mm (i) Let $\{\mu_{i_k}\}_{k=1}^\infty \subseteq\{\mu_i\}_{i=1}^\infty$ be a subsequence such that $$\lim_{k \rightarrow \infty}\widehat{\mathcal{G}}_{\varphi}(\mu_{i_k}) = \liminf_{i\rightarrow \infty}\widehat{\mathcal{G}}_{\varphi}(\mu_i).$$
By the arguments above, there exist a subsequence $\{M_{i_{k_j}}\}_{j=1}^\infty$ of $\{M_{i_k}\}_{k=1}^\infty$ and a convex body $M^\prime\in \mathcal{K}_0$ such that $M_{i_{k_j}}\rightarrow M^\prime$ as $j\rightarrow \infty$ and $|{(M^\prime)}^\circ|=\omega_{n}$. Thus, by Lemma \ref{uniformly converge-lemma}, one has
\begin{eqnarray*}
\liminf_{i\rightarrow \infty}\widehat{\mathcal{G}}_{\varphi}(\mu_i)&=&\lim_{j \rightarrow \infty}\widehat{\mathcal{G}}_{\varphi}(\mu_{i_{k_j}})\\&=&\lim_{j \rightarrow \infty}\int_{S^{n-1}}\varphi\big( h_{M_{i_{k_j}}} \big) \,d \mu_{i_{k_j}}\\&=&\int_{S^{n-1}}\varphi\big( h_{M^\prime} \big) \,d \mu \\&\geq & \widehat{\mathcal{G}}_{\varphi}(\mu).
\end{eqnarray*}
Together with (\ref{continuous-1"}), one has $\widehat{\mathcal{G}}_{\varphi}(\mu_i) \rightarrow \widehat{\mathcal{G}}_{\varphi}(\mu)$ as $i\rightarrow \infty.$
 
\vskip 2mm  (ii) Let $\{M_{i_k}\}_{k=1}^\infty$ be any subsequence of $\{M_i\}_{i=1}^\infty.$ The weak convergence of $\mu_{i_k} \rightarrow \mu$, along with part (i) above, implies
\be\nonumber
\widehat{\mathcal{G}}_{\varphi}(\mu)=\lim_{k\rightarrow\infty}\widehat{\mathcal{G}}_{\varphi}(\mu_{i_k})
=\lim_{k\rightarrow\infty}\int_{S^{n-1}}\varphi\big( h_{M_{i_k}} \big) \,d \mu_{i_{k}}.
\ee 
Again, for $\{M_{i_k}\}_{k=1}^\infty$, there exist a subsequence $\{M_{i_{k_j}}\}_{j=1}^\infty $ and a convex body $M^\prime\in \mathcal{K}_0$ such that $M_{i_{k_j}}\rightarrow M^\prime$ as $j\rightarrow \infty$ and $|{(M^\prime)}^\circ|=\omega_{n}$.
By Lemma \ref{uniformly converge-lemma}, one has
$$
\widehat{\mathcal{G}}_{\varphi}(\mu)
=\lim_{j\rightarrow\infty}\widehat{\mathcal{G}}_{\varphi}(\mu_{i_{k_j}})=\lim_{j\rightarrow\infty}\int_{S^{n-1}}\varphi\big( h_{M_{i_{k_j}}} \big) \,d \mu_{i_{k_j}}
=\int_{S^{n-1}}\varphi\big( h_{M^\prime} \big) \,d \mu.
$$
The uniqueness in Theorem \ref{Theorem3.1} yields $M=M^\prime$. Consequently, $M_{i_{k_j}}\rightarrow M$ as $j \rightarrow \infty.$ In summary, we have proved that any subsequence of $\{M_{i}\}_{i=1}^\infty$ has a subsequence converging to $M,$ and then $M_i\rightarrow M$ as $i \rightarrow \infty$ with respect to the Hausdorff metric.
\end{proof}

Let $\mathcal{D}$ be the set of continuous functions $\varphi: (0,\infty)\rightarrow(0,\infty)$ such that
$\varphi$ is strictly decreasing, $\lim_{t \to 0^+}\varphi(t)=\infty, \varphi(1)=1$ and $\lim_{t \to \infty}\varphi(t)=0.$ The following proposition states that the polar Orlicz-Minkowski problems (\ref{Orlicz-Minkowski problem})
might not be solvable in general for cases other than (\ref{polar Orlicz-Minkowski problem 3.2}). For an $n\times n$ matrix $T$, its transpose is denoted by $T^t$. Denote by $O(n)$ the set of all invertible $n\times n$ matrices such that $T T^t=T^tT$ equal to identity matrix on $\mathbb{R}^n$. 
\begin{proposition}\label{denial of other possibilities}
Let $\mu=\sum^m_{i=1}\lambda_i\delta_{u_i}$ with $\lambda_i>0$ for any $1\leq i\leq m$ be a given nonzero finite discrete measure whose support $\{u_1,u_2,\cdots, u_m\}$ is not concentrated on any hemisphere of $S^{n-1}$.\\
(i) If $\varphi\in\mathcal{D}$ and the first coordinates of $u_1,u_2,\cdots, u_m$ are all nonzero, then
$$\inf \bigg\{\int_{S^{n-1}}\varphi\big( h_L \big) \,d \mu: L \in \mathcal{K}_{0} \ \text{and}\   |L^\circ|=\omega_{n}\bigg\}=0.$$
(ii) If $\varphi\in\mathcal{I}\cup\mathcal{D},$ then
 $$\sup \bigg\{\int_{S^{n-1}}\varphi\big( h_L\big) \,d \mu: L \in \mathcal{K}_{0} \ \text{and}\   |L^\circ|=\omega_{n}\bigg\}=\infty.$$
\end{proposition}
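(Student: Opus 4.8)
The plan is to exhibit, in each of the two cases, an explicit one-parameter family of convex bodies $L_t \in \mathcal{K}_0$ with $|L_t^\circ| = \omega_n$ along which the functional $\int_{S^{n-1}} \varphi(h_L)\,d\mu = \sum_{i=1}^m \lambda_i\, \varphi(h_L(u_i))$ behaves as claimed. Since $\mu$ is discrete, the integral is just a finite sum, so everything reduces to controlling the finitely many support-function values $h_L(u_1),\dots,h_L(u_m)$ while keeping the polar-volume constraint intact. The natural way to rescale a given body $L$ so that its polar has volume exactly $\omega_n$ is to replace $L$ by $\mathrm{vrad}(L^\circ)\,L$, which has $h_{\mathrm{vrad}(L^\circ)L} = \mathrm{vrad}(L^\circ)\,h_L$ and polar volume $\omega_n$; thus it suffices to produce families of bodies for which the relevant combination of $\mathrm{vrad}(L^\circ)$ and the values $h_L(u_i)$ is driven to $0$ (for part (i)) or $\infty$ (for part (ii)).

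For part (i): the hypothesis that every $u_i$ has nonzero first coordinate is there precisely so that an anisotropic stretching in the first coordinate direction makes $h_L(u_i)$ large for every $i$ simultaneously. Concretely, I would take $L_t = T_t B_2^n$ where $T_t = \mathrm{diag}(t, t^{-1/(n-1)}, \dots, t^{-1/(n-1)}) \in \mathrm{SL}(n)$ (a volume-preserving linear map), so that $L_t$ is an ellipsoid with $|L_t| = \omega_n$; then $L_t^\circ = T_t^{-t} B_2^n = T_t^{-1} B_2^n$ is also a volume-$\omega_n$ ellipsoid (using $T_t \in O(n)$-diagonal so $T_t^{-t}=T_t^{-1}$), hence the constraint $|L_t^\circ| = \omega_n$ holds for all $t$, with no rescaling needed. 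Since $h_{T_t B_2^n}(u) = |T_t^t u| = |T_t u| \ge t\,|\langle u, e_1\rangle|$ and all first coordinates $\langle u_i, e_1\rangle$ are nonzero, one has $h_{L_t}(u_i) \to \infty$ for every $i$ as $t \to \infty$; because $\varphi \in \mathcal{D}$ is strictly decreasing to $0$ at infinity, $\varphi(h_{L_t}(u_i)) \to 0$ for each $i$, and the finite sum $\sum_i \lambda_i \varphi(h_{L_t}(u_i)) \to 0$. As the infimum is nonnegative, it equals $0$.

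For part (ii): here I only need one support value to blow up (if $\varphi \in \mathcal{I}$) or one to go to $0$ (if $\varphi \in \mathcal{D}$), since $\varphi$ is positive and the sum dominates any single term $\lambda_i \varphi(h_L(u_i))$. Fix an index, say $i=1$. For $\varphi \in \mathcal{I}$, take a long thin body stretched in the direction $u_1$: let $L_t$ be the convex hull of $B_2^n$ with $\{\pm t u_1\}$, then rescale to $\widetilde{L}_t = \mathrm{vrad}(L_t^\circ)\,L_t$; one checks $\mathrm{vrad}(L_t^\circ) \to$ some positive limit (indeed $L_t^\circ$ shrinks only in the $u_1$ direction and $|L_t^\circ|$ stays bounded below away from $0$, so $\mathrm{vrad}(L_t^\circ)$ is bounded below), while $h_{L_t}(u_1) \ge t$, so $h_{\widetilde{L}_t}(u_1) \to \infty$ and $\varphi(h_{\widetilde{L}_t}(u_1)) \to \infty$. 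For $\varphi \in \mathcal{D}$, instead take $L_t$ to be a flat body thin in the $u_1$ direction — e.g. $L_t = \{x : |\langle x, u_1\rangle| \le 1/t\} \cap t B_2^n$, rescaled to have polar volume $\omega_n$ — so that after rescaling $h_{\widetilde{L}_t}(u_1) \to 0$, whence $\varphi(h_{\widetilde{L}_t}(u_1)) \to \infty$. In both cases the supremum is $\infty$.

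The main obstacle is the bookkeeping around the constraint $|L^\circ| = \omega_n$: I must verify that the rescaling factors $\mathrm{vrad}(L_t^\circ)$ do not collapse to $0$ or escape to $\infty$ in a way that cancels the intended effect on $h_{L_t}(u_i)$. For part (i) this is handled cleanly by using volume-preserving linear images of the ball (both the body and its polar stay at volume $\omega_n$). For part (ii) one needs the elementary observation that when an ellipsoid-like body is stretched (or flattened) in a single direction by a factor $t$, its polar is flattened (or stretched) only in that same direction, so $|L_t^\circ|$ — and hence $\mathrm{vrad}(L_t^\circ)$ — stays in a fixed compact subinterval of $(0,\infty)$; this gives the needed uniform control with room to spare, and the details are routine estimates on support and radial functions of these explicit bodies.
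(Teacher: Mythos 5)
Your part (i) is correct and is essentially the paper's own argument: the paper also uses a volume\-/preserving diagonal stretch of the ball in the first coordinate direction (it writes $L_\epsilon=\epsilon^{-1}\,\mathrm{diag}(1,\dots,1,\epsilon^{n})B_2^n$, which is the same $SL(n)$ ellipsoid trick as your $T_t=\mathrm{diag}(t,t^{-1/(n-1)},\dots,t^{-1/(n-1)})$), so that both $L$ and $L^\circ$ have volume $\omega_n$ automatically and $h_{L}(u_i)\geq t\,|(u_i)_1|\to\infty$. (Minor slip: $T_t$ is not in $O(n)$; what you are using is that $T_t$ is symmetric, hence $T_t^{-t}=T_t^{-1}$.)

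Part (ii) contains a genuine error in the control of the normalization. You assert that for $L_t=\mathrm{conv}(B_2^n\cup\{\pm t u_1\})$ the polar volume $|L_t^\circ|$ ``stays bounded below away from $0$,'' and more generally that stretching a body by a factor $t$ in one direction only flattens the polar in that same direction ``so $|L_t^\circ|$ stays in a fixed compact subinterval of $(0,\infty)$.'' This is false: $L_t^\circ=B_2^n\cap\{x:|\langle x,u_1\rangle|\leq 1/t\}$ is a slab of width $2/t$, so $|L_t^\circ|\sim 2\omega_{n-1}/t\to 0$ and $\mathrm{vrad}(L_t^\circ)\sim c\,t^{-1/n}\to 0$. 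Your argument as written therefore does not establish that $h_{\widetilde L_t}(u_1)=\mathrm{vrad}(L_t^\circ)\,h_{L_t}(u_1)\to\infty$; that conclusion happens to survive only because $t\cdot t^{-1/n}=t^{(n-1)/n}\to\infty$, a computation you did not perform (and similarly for your flat\-/slab construction in the $\mathcal{D}$ case, where the exponents again work out but must be checked, e.g. $|L_t^\circ|\sim c\,t^{2-n}$ there). So the construction is salvageable, but the justification you give for the key normalization step is wrong rather than ``routine with room to spare.'' The paper sidesteps this entirely by staying inside $SL(n)$: it takes an orthogonal $T$ with first column $u_1$, sets $T_\epsilon=T\,\mathrm{diag}(\epsilon^{-1},\epsilon,1,\dots,1)\,T^{t}$ and $L_\epsilon=T_\epsilon B_2^n$, so that $|L_\epsilon^\circ|=\omega_n$ exactly and $h_{L_\epsilon}(u_1)=\epsilon^{-1}$; letting $\epsilon\to 0$ handles $\varphi\in\mathcal{I}$ and $\epsilon\to\infty$ handles $\varphi\in\mathcal{D}$, with no rescaling to control. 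I recommend either adopting that construction or inserting the explicit asymptotics of $\mathrm{vrad}(L_t^\circ)$ into your argument.
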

\begin{proof}
(i) Let $\alpha=\min_{1\leq i\leq m}\{|(u_i)_1|\}$ and then $\alpha>0$ by assumption. For any $\epsilon>0$, let
$$T_\epsilon=\text{diag}(1,1,\cdots,1,\epsilon^{n}) \ \ \text{and} \ \ L_\epsilon=\epsilon^{-1} \cdot T_\epsilon B^n_2.$$
Thus $(L_\epsilon)^\circ=\epsilon \cdot (T^{t}_\epsilon)^{-1} B^n_2$ and $|(L_\epsilon)^\circ|=\omega_n$. Moreover, for $1\leq i\leq m$, one has
$$|T_\epsilon u_i|=\sqrt{(u_i)^2_1+(u_i)^2_2+\cdots+\epsilon^{2n}(u_i)^2_n}\geq |(u_i)_1|\geq\alpha,
$$
and then
$$
h_{L_\epsilon}(u_i)=\max_{v_1\in L_\epsilon}\langle v_1,u_i \rangle=\max_{v_2\in B^n_2}\langle \epsilon^{-1}\cdot T_\epsilon v_2,u_i \rangle=\epsilon^{-1}\cdot \max_{v_2\in B^n_2}\langle v_2, T_\epsilon u_i \rangle=\epsilon^{-1}\cdot |T_\epsilon u_i|\geq \alpha/\epsilon.$$
It follows from the fact $\varphi$ is strictly decreasing that
\begin{eqnarray*}
\int_{S^{n-1}}\varphi\big( h_{L_\epsilon}\big)\,d\mu
&=&\sum^m_{i=1}\varphi\left(h_{L_\epsilon}(u_i) \right)\cdot\mu(\{u_i\})\\
&\leq&\sum^m_{i=1}\varphi\left(\alpha/\epsilon\right)\cdot\mu(\{u_i\})\\&=&\varphi\left(\alpha/\epsilon \right)\cdot\mu(S^{n-1}).
\end{eqnarray*}
Note that $\varphi(\alpha/\epsilon)\rightarrow 0$ as $\epsilon\rightarrow 0$ due to $\varphi\in \mathcal{D}$, and then
$$
\inf \bigg\{\int_{S^{n-1}}\varphi\big( h_L \big) \,d \mu: L \in \mathcal{K}_{0} \ \text{and}\   |L^\circ|=\omega_{n}\bigg\}
\leq\varphi\left(\alpha/\epsilon \right)\cdot\mu(S^{n-1})\rightarrow0\ \ \text{as} \ \epsilon\rightarrow 0.
$$

(ii) Without loss of generality, we assume $\mu(\{u_1\})>0.$ By the Gram-Schmidt process, one could get an orthogonal matrix $T\in O(n)$ with its first column vector being $u_1.$ For any $\epsilon>0,$ let
$$T_\epsilon=T\cdot\text{diag}(\epsilon^{-1},\epsilon,1,1,\cdots,1)\cdot T^t \ \ \text{and} \ \ L_\epsilon=T_\epsilon B^n_2.$$
Then  $|(L_\epsilon)^\circ|=\omega_n$ and
$$
h_{L_\epsilon}(u_1)=\max_{v_1\in L_\epsilon}\langle v_1,u_1 \rangle=\max_{v_2\in B^n_2}\langle T_\epsilon v_2,u_1 \rangle=\max_{v_2\in B^n_2}\langle v_2, T_\epsilon u_1 \rangle=\max_{v_2\in B^n_2}\langle v_2,\epsilon^{-1} u_1 \rangle=1/\epsilon.$$
Thus, one has
\begin{eqnarray*}
\int_{S^{n-1}}\varphi\big( h_{L_\epsilon} \big)\,d\mu
=\sum^m_{i=1}\varphi\left(h_{L_\epsilon}(u_i) \right)\cdot\mu(\{u_i\})
\geq\varphi\left( h_{L_\epsilon}(u_1)\right)\cdot \mu(\{u_1\})
=\varphi\left(1/ \epsilon \right)\cdot \mu(\{u_1\}).
\end{eqnarray*} For $\varphi\in\mathcal{I}$, letting $\epsilon\rightarrow 0$, one gets
$$\sup \bigg\{\int_{S^{n-1}}\varphi\big( h_L\big) d \mu: L \in \mathcal{K}_{0} \ \text{and}\   |L^\circ|=\omega_{n}\bigg\}=\infty.$$
Following the same lines, the desired result for $\varphi\in\mathcal{D}$ can be obtained as well.
\end{proof}
Let $\phi:(0,\infty)\rightarrow(0,\infty)$ be a continuous function and $K\in\mathcal{K}_0,$ then $\,d\mu=\displaystyle{\frac{1}{\phi(h_K)}}\,dS(K,\cdot)\in\Omega$ is a nonzero finite measure on $S^{n-1}$ which is not concentrated on any hemisphere of $S^{n-1}$. Theorem \ref{Theorem3.1} yields that if $\varphi\in\mathcal{I},$ there exists a convex body $M\in\mathcal{K}_0$ such that $|M^\circ|=\omega_n$ and
$$nV_{\varphi,\phi}(K,M)=\int_{S^{n-1}}\frac{\varphi(h_M(u))}{\phi(h_K(u))}\,dS(K,u)=\inf\bigg\{nV_{\varphi,\phi}(K,L):L\in\mathcal{K}_0\ \ \text{and}\ \ |L^\circ|=\omega_n\bigg\}.$$
This is the polar analogue of the Orlicz-Minkowski problems studied in \cite{HLYZ2010,HH2012,Li2014}.
In particular, if $\varphi(t)=t^q$ and $\phi(t)=t^{q-1}$ for $q>0,$ it goes back to (\ref{1.7}). Other examples include the measures generated from $S_\mathcal{V}(K, \cdot)$ for some $K\in \mathcal{K}_0$ as given by Definition \ref{definition 2.1}.   For instance, let $d\mu=\displaystyle{\frac{1}{\phi(h_K)}}d\mu_\tau(K,\cdot),$ which is not concentrated on any hemisphere of $S^{n-1}.$ Theorem \ref{Theorem3.1} implies the existence of a convex body $M\in\mathcal{K}_0$ such that $|M^\circ|=\omega_n$ and
$$\int_{S^{n-1}}\frac{\varphi(h_M(u))}{\phi(h_K(u))}\,d\mu_\tau(K,u)=\inf\bigg\{\int_{S^{n-1}}\frac{\varphi(h_L(u))}{\phi(h_K(u))}\,d\mu_\tau(K,u):L\in\mathcal{K}_0\ \ \text{and}\ \ |L^\circ|=\omega_n\bigg\}.$$
In particular, if $\varphi(t)=t^q$ and $\phi(t)=t^{q-1}$ for $q>0,$ similar to (\ref{1.7}), one gets the $L_q$ torsional Petty body; namely, $M\in\mathcal{K}_0$ such that $|M^\circ|=\omega_n$ and
\begin{eqnarray*}\mu_{\tau,q}(K,M)&=&\int_{S^{n-1}}\Big(\frac{h_M(u)}{h_K(u)}\Big)^q h_K(u)d\mu_\tau(K,u)\\&=&\inf\bigg\{\int_{S^{n-1}}\Big(\frac{h_L(u)}{h_K(u)}\Big)^q h_K(u)d\mu_\tau(K,u):L\in\mathcal{K}_0\ \text{and}\ \ |L^\circ|=\omega_n\bigg\}.\end{eqnarray*}
Of course, one can also let $d\mu=\displaystyle{\frac{1}{\phi(h_K)}}d\mu_p(K,\cdot)$ and get the similar results for the $p$-capacitary measure. Different but closely related concepts, the $p$-capacitary Orlicz-Petty bodies,  will be discussed in Section \ref{section:4}.

It is well known that $\mu\in\Omega,$ i.e., $\mu$ is not concentrated on any hemisphere of $S^{n-1}$, is the minimal requirement for solutions to various Minkowski problems. For instance, for $q>1,$ it has been proved in \cite{HLYZ2005} that if $\mu\in\Omega,$ there exists a convex body $K$ containing $o$ (note that $K$ may not be in $\mathcal{K}_0$ unless $q>n$) such that $|K|\cdot h^{q-1}_Kd\mu=dS(K,\cdot).$
In this case, especially if $K\in\mathcal{K}_0,$ one can link $\mu$ to a convex body. However, it is not clear whether, in general, there exists a convex body $K\in\mathcal{K}_0$ such that $d\mu=c\cdot h^{1-q}_KdS(K,\cdot)$ for $q<1$; see special cases in \cite{HLYZ2010, GZhu2015I,GZhu2015II,GZhupreprint}. In other words, $\mu\in\Omega,$ although closely related to convex bodies, is in fact more general than the measures generated from convex bodies. Consequently, the polar Orlicz-Minkowski problem is much more general than (\ref{1.7}) and its (direct Orlicz) extensions involving convex bodies.

Now let us discuss some dissimilarities between the Minkowski and the polar Minkowski problems. First of all, the solutions are always convex bodies in $\mathcal{K}_0$ for the polar Minkowski problem (\ref{polar Orlicz-Minkowski problem 3.2}), while this may not be true for Minkowski problems as mentioned above. Secondly, as showed in Proposition \ref{denial of other possibilities}, the solutions to the polar Minkowski problems for discrete measures usually do not exist, except the case (\ref{polar Orlicz-Minkowski problem 3.2}) for $\varphi\in \mathcal{I}$. However, as showed in, e.g. \cite{GZhu2015II,GZhupreprint}, the solutions to the $L_q$ Minkowski problems for discrete measures could be well-existed for $q<0.$ Finally, it seems intractable to find a direct relation between $\mu\in\Omega$ and the solutions to the polar Minkowski problems, while such a relation usually can be established as long as the solutions exist for the related Minkowski problems.

Define $\|f\|_{L_\varphi(\mu)}$ as follows:
for $\mu\in\Omega$ and $f: S^{n-1}\rightarrow \mathbb{R},$
\begin{eqnarray*}  \|f\|_{L_\varphi(\mu)} &=& \inf\bigg\{\lambda>0:\int_{S^{n-1}}\varphi\Big(\frac{f}{\lambda}\Big)\,d\mu\leq\mu(S^{n-1})\bigg\}\ \ \text{for} \ \ \varphi\in\mathcal{I}, \\ \|f\|_{L_\varphi(\mu)}&=&\inf\bigg\{\lambda>0:\int_{S^{n-1}}\varphi\Big(\frac{f}{\lambda}\Big)\,d\mu\geq\mu(S^{n-1})\bigg\}\ \ \text{for} \ \ \varphi\in\mathcal{D}. \end{eqnarray*}  Clearly, $\|t \cdot f\|_{L_\varphi(\mu)}=t\cdot \|f\|_{L_\varphi(\mu)}$ for any $t>0.$ Moreover, for $L\in\mathcal{K}_0$ and $\varphi\in \mathcal{I}\cup\mathcal{D}$,  $$\|h_L\|_{L_\varphi(\mu)}>0\ \ \ \mathrm{and} \ \ \  \int_{S^{n-1}}\varphi\Big(\frac{h_L}{\|h_L\|_{L_\varphi(\mu)}}\Big)\,d\mu=\mu(S^{n-1}).$$
Let ${\left\{L_i\right\}}_{i=1}^{\infty}\subseteq \mathcal{K}_0$ and $L\in\mathcal{K}_0$  be such that $L_i\rightarrow L$ with respect to the Hausdorff metric, then $\|h_{L_i}\|_{L_\varphi(\mu)}\rightarrow \|h_L\|_{L_\varphi(\mu)}.$ This can be proved along the same lines as Proposition \ref{Proposition3.1} in Section \ref{section:4}. Moreover, if $\varphi\in\mathcal{I},$ $\mu_i\rightarrow\mu$ weakly and there exists a positive constant $C>0,$ such that,  $\|h_{L_i}\|_{L_\varphi(\mu_i)}\leq C$ for any $i\geq1,$ then ${\left\{L_i\right\}}_{i=1}^{\infty}$ is uniformly bounded; this can be proved along the same lines as Proposition \ref{proposition3.3}. We leave the details for readers. 

For $\|h_L\|_{L_\varphi(\mu)}$, we can also ask the related polar Orlicz-Minkowski problems:
under what conditions on $\varphi$ and $\mu$, there exists a convex body $M\in\mathcal{K}_0$ such that $M$ is an optimizer of
\begin{eqnarray}
&&\inf \big\{\|h_L\|_{L_\varphi(\mu)}: L \in \mathcal{K}_{0} \ \text{and}\   |L^\circ|=\omega_{n}\big\},\label{3.8} \ \ \ \text{or} \\
&&\sup \big\{\|h_L\|_{L_\varphi(\mu)}: L \in \mathcal{K}_{0} \ \text{and}\   |L^\circ|=\omega_{n}\big\}.\label{3.8'}\end{eqnarray}
The following theorem states that problem (\ref{3.8}) is solvable for $\varphi\in\mathcal{I}$ and $\mu\in \Omega$. The proof follows along the same lines as Theorem \ref{Theorem3.1} and will be omitted.
\begin{theorem}\label{Theorem3.3}
Let $\mu\in\Omega$  and $\varphi \in \mathcal{I}.$ Then there exists a convex body $\widehat{M}\in\mathcal{K}_0$ such that $|\widehat{M}^\circ|=\omega_{n}$ and
$$\|h_{\widehat{M}}\|_{L_\varphi(\mu)}=\inf\big\{\|h_L\|_{L_\varphi(\mu)}: L \in \mathcal{K}_{0} \ \text{and}\   |L^\circ|=\omega_{n}\big\}.$$
Moreover, if $\varphi\in\mathcal{I}$ is convex, then $\widehat{M}$ is the unique solution to the polar Orlicz-Minkowski problem $(\ref{3.8})$.
\end{theorem}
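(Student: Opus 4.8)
The plan is to mirror the proof of Theorem \ref{Theorem3.1}, since the functional $\|h_L\|_{L_\varphi(\mu)}$ for $\varphi\in\mathcal{I}$ behaves very much like $\int_{S^{n-1}}\varphi(h_L)\,d\mu$: it is positive on $\mathcal{K}_0$, finite (bounded above by taking $L=B^n_2$ and using $\varphi(1)=1$, which gives $\|h_{B^n_2}\|_{L_\varphi(\mu)}\le 1$), and it is monotone and continuous under Hausdorff convergence by the remarks preceding the statement. Writing $\widehat{\mathcal{G}}^*_\varphi(\mu)$ for the infimum in (\ref{3.8}), the first step is to fix a minimizing sequence $\{M_i\}\subseteq\mathcal{K}_0$ with $|M_i^\circ|=\omega_n$ and $\|h_{M_i}\|_{L_\varphi(\mu)}\to\widehat{\mathcal{G}}^*_\varphi(\mu)<\infty$, set $R_i=\rho_{M_i}(u_i)=\max_{u}\rho_{M_i}(u)$, pass to a subsequence so that $u_i\to v\in S^{n-1}$, and use the pointwise bound $h_{M_i}(u)\ge R_i\langle u,u_i\rangle_+$.

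The key step is the uniform boundedness of $\{R_i\}$. Suppose for contradiction that $R_i\to\infty$. Since $\mu\in\Omega$ is not concentrated on any hemisphere, fix $j_0$ as in (\ref{3.2'}) so that $\int_{\{\langle u,v\rangle_+\ge 1/j_0\}}\langle u,v\rangle_+\,d\mu(u)>0$. By monotonicity of $\varphi$ and of $\|\cdot\|_{L_\varphi(\mu)}$, for any fixed constant $C>0$ one has $\|h_{M_i}\|_{L_\varphi(\mu)}\ge \|R_i\langle\cdot,u_i\rangle_+\|_{L_\varphi(\mu)}\ge \|C\langle\cdot,u_i\rangle_+\|_{L_\varphi(\mu)}$ once $R_i\ge C$. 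The definition of $\|\cdot\|_{L_\varphi(\mu)}$ gives $\int_{S^{n-1}}\varphi\big(C\langle u,u_i\rangle_+/\lambda_i\big)\,d\mu(u)=\mu(S^{n-1})$ where $\lambda_i=\|C\langle\cdot,u_i\rangle_+\|_{L_\varphi(\mu)}$; I would show $\liminf_i\lambda_i>0$ grows with $C$ by a lower estimate: restricting the integral to $\{\langle u,u_i\rangle_+\ge 1/j_0\}$ (which for large $i$ has positive $\mu$-measure bounded below, using $u_i\to v$ and a Fatou/monotone-convergence argument as in Theorem \ref{Theorem3.1}) forces $\varphi(C/(j_0\lambda_i))$ to stay bounded, hence $\lambda_i\ge c(C)$ with $c(C)\to\infty$ as $C\to\infty$ because $\varphi\in\mathcal{I}$ has $\varphi(t)\to\infty$. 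Letting first $i\to\infty$ and then $C\to\infty$ yields $\widehat{\mathcal{G}}^*_\varphi(\mu)=\infty$, a contradiction. I expect this is the main obstacle, since one must handle the implicitly-defined Luxemburg-type norm rather than a plain integral; the clean way is to transfer everything back to the integral condition defining $\|\cdot\|_{L_\varphi(\mu)}$ and then repeat the Fatou argument of Theorem \ref{Theorem3.1} verbatim.

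Once $\sup_i R_i<\infty$, the sequence $\{M_i\}$ is uniformly bounded with $|M_i^\circ|=\omega_n$, so Lemma \ref{blaschke-selection} provides a subsequence converging to some $\widehat{M}\in\mathcal{K}_0$ with $|\widehat{M}^\circ|=\omega_n$. Continuity of $L\mapsto\|h_L\|_{L_\varphi(\mu)}$ under the Hausdorff metric (the remark cited before the statement, provable along the lines of Proposition \ref{Proposition3.1}) gives $\|h_{\widehat{M}}\|_{L_\varphi(\mu)}=\lim_i\|h_{M_i}\|_{L_\varphi(\mu)}=\widehat{\mathcal{G}}^*_\varphi(\mu)$, so $\widehat{M}$ is a minimizer.

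For uniqueness when $\varphi\in\mathcal{I}$ is convex, suppose $M_1,M_2$ are both minimizers with $|M_1^\circ|=|M_2^\circ|=\omega_n$ and let $M_0=\tfrac{1}{2}(M_1+M_2)$. As in Theorem \ref{Theorem3.1}, strict convexity of $t\mapsto t^{-n}$ together with (\ref{measure-3}) and (\ref{measure-4}) gives $\mathrm{vrad}(M_0^\circ)\le 1$ with equality iff $M_1=M_2$. Then, using $h_{M_0}=\tfrac12(h_{M_1}+h_{M_2})$, convexity of $\varphi$, and monotonicity of both $\varphi$ and $\|\cdot\|_{L_\varphi(\mu)}$, one computes
\[
\widehat{\mathcal{G}}^*_\varphi(\mu)\le \big\|\mathrm{vrad}(M_0^\circ)h_{M_0}\big\|_{L_\varphi(\mu)}\le \big\|h_{M_0}\big\|_{L_\varphi(\mu)}\le \tfrac12\big\|h_{M_1}\big\|_{L_\varphi(\mu)}+\tfrac12\big\|h_{M_2}\big\|_{L_\varphi(\mu)}=\widehat{\mathcal{G}}^*_\varphi(\mu),
\]
where the convexity-of-$\varphi$ input enters through subadditivity of the Luxemburg-type norm $\|\cdot\|_{L_\varphi(\mu)}$ (verified directly from its definition by the standard argument). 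Forcing all inequalities to be equalities gives $\mathrm{vrad}(M_0^\circ)=1$, hence $M_1=M_2$. Since every step is a routine adaptation of Theorem \ref{Theorem3.1}, the full details are omitted.
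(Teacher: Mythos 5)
Your proposal is correct and is essentially the proof the paper intends: the paper omits the argument for Theorem \ref{Theorem3.3}, stating only that it "follows along the same lines as Theorem \ref{Theorem3.1}," and your adaptation supplies exactly the needed modifications (monotonicity and positive homogeneity of the Luxemburg-type norm for the boundedness and normalization steps, reduction to the defining integral identity plus the Fatou argument for the contradiction, and the triangle inequality for convex $\varphi$ in the uniqueness step). The only place worth tightening is the boundedness step, where your cleaner alternative --- bounding $\|h_{M_i}\|_{L_\varphi(\mu)}\le\Lambda$ and feeding $h_{M_i}\ge R_i\langle\cdot,u_i\rangle_+$ directly into $\int\varphi(h_{M_i}/\|h_{M_i}\|_{L_\varphi(\mu)})\,d\mu=\mu(S^{n-1})$ before applying Fatou --- is preferable to the detour through $\lambda_i=\|C\langle\cdot,u_i\rangle_+\|_{L_\varphi(\mu)}$.
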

Moreover, one can get arguments similar to Theorem \ref{Theorem 3.2}.
When $\mu$ is a discrete measure, part (i) of the following proposition states that the solutions to problem (\ref{3.8}) for $\varphi\in\mathcal{I}$ are polytopes. However, parts (ii)-(iv) show that the polar Orlicz-Minkowski problems (\ref{3.8}) and (\ref{3.8'}) might not be solvable in general.

\begin{proposition}Let $\mu=\sum^m_{i=1}\lambda_i\delta_{u_i}$ with $\lambda_i>0$ for any $1\leq i\leq m$ be a given nonzero finite discrete measure whose support $\{u_1,u_2,\cdots, u_m\}$ is not concentrated on any hemisphere of $S^{n-1}$.\\
(i) If $\varphi\in\mathcal{I}$ and $\widehat{M}\in\mathcal{K}_0$ is a solution to the polar Orlicz-Minkowski problem $(\ref{3.8})$, then $\widehat{M}$ is a polytope with $u_1,u_2,\cdots, u_m$ being the unit normal vectors of its faces.\\
(ii) If $\varphi\in\mathcal{I},$ then
$$\sup\big\{\|h_L\|_{L_\varphi(\mu)}: L \in \mathcal{K}_{0} \ \text{and}\   |L^\circ|=\omega_{n}\big\}=\infty.$$
 (iii) If $\varphi\in\mathcal{D}$, then
$$\inf\big\{\|h_L\|_{L_\varphi(\mu)}: L \in \mathcal{K}_{0} \ \text{and}\   |L^\circ|=\omega_{n}\big\}=0.$$
 (iv) If $\varphi\in\mathcal{D}$ and the first coordinates of $u_1,u_2,\cdots, u_m$ are all nonzero, then
$$\sup\big\{\|h_L\|_{L_\varphi(\mu)}: L \in \mathcal{K}_{0} \ \text{and}\   |L^\circ|=\omega_{n}\big\}=\infty.$$
\end{proposition}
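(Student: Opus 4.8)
The plan is to handle part (i) exactly as Proposition~\ref{proposition3.1} and parts (ii)--(iv) exactly as Proposition~\ref{denial of other possibilities}; the only genuinely new ingredient is a short list of elementary estimates for $\|\,\cdot\,\|_{L_\varphi(\mu)}$ when $\mu=\sum_{i=1}^m\lambda_i\delta_{u_i}$. Since every $\varphi\in\mathcal{I}\cup\mathcal{D}$ is a strictly monotone bijection of $(0,\infty)$ with $\varphi(1)=1$, for each $L\in\mathcal{K}_0$ the map $\lambda\mapsto\int_{S^{n-1}}\varphi(h_L/\lambda)\,d\mu$ is monotone in $\lambda$ and meets the level $\mu(S^{n-1})$ precisely at $\lambda=\|h_L\|_{L_\varphi(\mu)}$. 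Comparing $\varphi$ with $\varphi(1)=1$ atom by atom gives, for both $\mathcal{I}$ and $\mathcal{D}$,
\[
\min_{1\le i\le m}h_L(u_i)\ \le\ \|h_L\|_{L_\varphi(\mu)}\ \le\ \max_{1\le i\le m}h_L(u_i),
\]
whereas keeping only a single atom $u_i$ in $\int_{S^{n-1}}\varphi(h_L/\lambda)\,d\mu\ge\varphi(h_L(u_i)/\lambda)\,\lambda_i$ and applying $\varphi^{-1}$ gives, with $t_i:=\varphi^{-1}(\mu(S^{n-1})/\lambda_i)\in(0,\infty)$,
\[
\|h_L\|_{L_\varphi(\mu)}\le h_L(u_i)/t_i\ \ \text{if}\ \varphi\in\mathcal{D},\qquad \|h_L\|_{L_\varphi(\mu)}\ge h_L(u_i)/t_i\ \ \text{if}\ \varphi\in\mathcal{I}.
\]
I will also use, as in the reformulation following $(\ref{polar Orlicz-Minkowski problem 3.2})$, that the infimum and supremum in $(\ref{3.8})$ and $(\ref{3.8'})$ equal the corresponding extrema of $\text{vrad}(L^\circ)\,\|h_L\|_{L_\varphi(\mu)}$ over all $L\in\mathcal{K}_0$, because $L\mapsto L/\text{vrad}(L^\circ)$ normalizes the polar volume and multiplies the functional by $\text{vrad}(L^\circ)$.

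For part (i), given a solution $\widehat{M}$, I would set $P=\bigcap_{i=1}^m\{x\in\mathbb{R}^n:\langle x,u_i\rangle\le h_{\widehat{M}}(u_i)\}$. Since $\{u_1,\dots,u_m\}$ is not concentrated on any hemisphere, it positively spans $\mathbb{R}^n$, so $P\in\mathcal{K}_0$ is a polytope; moreover $\widehat{M}\subseteq P$, hence $h_P(u_i)=h_{\widehat{M}}(u_i)$ for every $i$, $P^\circ\subseteq\widehat{M}^\circ$, and $\text{vrad}(P^\circ)\le\text{vrad}(\widehat{M}^\circ)=1$. Because $\mu$ is supported on $\{u_1,\dots,u_m\}$ and $h_P$ agrees with $h_{\widehat{M}}$ there, $\|h_P\|_{L_\varphi(\mu)}=\|h_{\widehat{M}}\|_{L_\varphi(\mu)}$. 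Feeding $P$ into the un-normalized infimum then forces
\[
\inf_{|L^\circ|=\omega_n}\|h_L\|_{L_\varphi(\mu)}\le\text{vrad}(P^\circ)\|h_P\|_{L_\varphi(\mu)}\le\|h_P\|_{L_\varphi(\mu)}=\|h_{\widehat{M}}\|_{L_\varphi(\mu)}=\inf_{|L^\circ|=\omega_n}\|h_L\|_{L_\varphi(\mu)},
\]
so every inequality is an equality; as $\|h_P\|_{L_\varphi(\mu)}>0$ this gives $\text{vrad}(P^\circ)=1=\text{vrad}(\widehat{M}^\circ)$, hence $P^\circ=\widehat{M}^\circ$ (one inside the other with equal volume), hence $P=\widehat{M}$, a polytope with $u_1,\dots,u_m$ among the unit normals of its faces.

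For (ii)--(iv) I would recycle the explicit families used in the proof of Proposition~\ref{denial of other possibilities}. For (ii) ($\varphi\in\mathcal{I}$), assuming $\mu(\{u_1\})>0$, take $L_\epsilon=T_\epsilon B^n_2$ with $T_\epsilon=T\cdot\text{diag}(\epsilon^{-1},\epsilon,1,\dots,1)\cdot T^t$ and $T\in O(n)$ having first column $u_1$, so that $|(L_\epsilon)^\circ|=\omega_n$ and $h_{L_\epsilon}(u_1)=\epsilon^{-1}$; the single-atom lower bound gives $\|h_{L_\epsilon}\|_{L_\varphi(\mu)}\ge\epsilon^{-1}/t_1\to\infty$ as $\epsilon\to0$. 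For (iv) ($\varphi\in\mathcal{D}$, all first coordinates $(u_i)_1\ne0$), take $L_\epsilon=\epsilon^{-1}\cdot\text{diag}(1,\dots,1,\epsilon^n)\cdot B^n_2$, so that $|(L_\epsilon)^\circ|=\omega_n$ and $h_{L_\epsilon}(u_i)\ge\alpha/\epsilon$ for every $i$, where $\alpha=\min_i|(u_i)_1|>0$; the two-sided bound gives $\|h_{L_\epsilon}\|_{L_\varphi(\mu)}\ge\min_i h_{L_\epsilon}(u_i)\ge\alpha/\epsilon\to\infty$. For (iii) ($\varphi\in\mathcal{D}$, no coordinate hypothesis), the point is that the functional can already be made small by collapsing $h_L$ along one direction: take $L_\epsilon=T\cdot\text{diag}(\epsilon,\epsilon^{-1/(n-1)},\dots,\epsilon^{-1/(n-1)})\cdot T^t B^n_2$ with $T\in O(n)$ having first column $u_1$; this symmetric matrix has determinant $1$, so $|(L_\epsilon)^\circ|=\omega_n$, and $h_{L_\epsilon}(u_1)=\epsilon$, so the single-atom upper bound gives $\|h_{L_\epsilon}\|_{L_\varphi(\mu)}\le\epsilon/t_1\to0$.

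The step I expect to require the most care is part (iii). In Proposition~\ref{denial of other possibilities}(i) one had to drive every $h_L(u_i)$ to infinity, which is why all first coordinates were assumed nonzero; here, by contrast, $\|\,\cdot\,\|_{L_\varphi(\mu)}$ becomes small as soon as $h_L$ is collapsed in one direction, so no coordinate hypothesis is needed — but one must choose the compensating dilation in the orthogonal hyperplane so that $|L^\circ|$ stays exactly equal to $\omega_n$, and must use the estimate valid for $\mathcal{D}$ rather than the one for $\mathcal{I}$. Keeping track of which monotonicity and which one-sided bound applies in each of (ii)--(iv), and checking boundedness of $P$ in (i), are the only remaining routine points.
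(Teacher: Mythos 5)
Your proposal is correct and follows exactly the route the paper intends (the paper omits this proof, pointing to Propositions \ref{proposition3.1} and \ref{denial of other possibilities}): the two-sided and single-atom estimates for $\|h_L\|_{L_\varphi(\mu)}$ are verified correctly for both $\mathcal{I}$ and $\mathcal{D}$, part (i) mirrors the polytope argument of Proposition \ref{proposition3.1}, and parts (ii) and (iv) reuse the ellipsoid families of Proposition \ref{denial of other possibilities}. Your one genuinely new ingredient — the determinant-one collapse $\mathrm{diag}(\epsilon,\epsilon^{-1/(n-1)},\dots,\epsilon^{-1/(n-1)})$ for part (iii), which explains why no coordinate hypothesis is needed there — is exactly right.
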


\section{The $p$-capacitary Orlicz-Petty bodies}\label{section:4}
Theorem \ref{Theorem3.1}, by letting  $\varphi(t)=t^q$ and $\,d\mu=\displaystyle{\frac{p-1}{n-p}}h^{1-q}_K \,d\mu_p(K,\cdot)$ for $q>0$ and $p\in (1, n)$, implies the existence of a convex body $M\in\mathcal{K}_0,$ which will be called the  $p$-capacitary $L_q$ Petty body, such that $|M^\circ|=\omega_n$ and
\begin{eqnarray*}C_{p,q}(K,M) = \frac{p-1}{n-p}\int_{S^{n-1}}\Big(\frac{h_M(u)}{h_K(u)}\Big)^q h_K(u)\,d\mu_p(K,u) = \inf\big\{C_{p, q}(K, L): L\in\mathcal{K}_0\ \text{and}\ |L^\circ|=\omega_n\big\}.\end{eqnarray*}
This motivates our interest in studying the $p$-capacitary Orlicz-Petty bodies.
\subsection{The nonhomogeneous and homogeneous Orlicz mixed $p$-capacities}
For $\varphi\in\mathcal{I}\cup\mathcal{D},$ the nonhomogeneous $L_\varphi$ Orlicz mixed $p$-capacity $C_{p,\varphi}(\cdot,\cdot)$ defined in (\ref{1.18}) was introduced in \cite{HYZ2017I}. When $\varphi(t)=t,$ the mixed $p$-capacity was provided in \cite{Cole2015}.
\begin{definition}\label{nonhomogeneous Orlicz mixed $p$-capacity}
Let $\varphi\in\mathcal{I}\cup\mathcal{D},$ $p\in(1,n)$, $K\in \mathcal{K}_0$ and $L\in \mathcal{S}_0$, define  $C_{p,\varphi}(K,L^\circ)$ by
\begin{equation}\label{definition of mixed volume for star body nonhomogeneous}
C_{p,\varphi}(K,L^\circ)=\frac{p-1}{n-p}\int_{S^{n-1}}\varphi\left( \frac{1}{\rho_L(u)\cdot h_K(u)} \right)h_K(u)\,d \mu_p(K,u).
\end{equation}\end{definition} When $L\in \mathcal{K}_0$, by the bipolar theorem, one can easily get  (\ref{1.18}) from (\ref{definition of mixed volume for star body nonhomogeneous}) by  $C_{p,\varphi}(K,L)=C_{p,\varphi}(K, (L^\circ)^\circ)$. Note that $\varphi$ in Definition \ref{nonhomogeneous Orlicz mixed $p$-capacity} can be any continuous function. However, the monotonicity of $\varphi$ is crucial in later context, so we only focus on $\varphi\in\mathcal{I}\cup\mathcal{D}.$ We would like to mention that Hong, Ye and Zhang in \cite{HYZ2017I} provided a geometric interpretation of the Orlicz mixed $p$-capacity of $K, L\in \mathcal{K}_0$. Clearly, $C_{p,\varphi}(K,K)=C_p(K)$ for $\varphi\in\mathcal{I}\cup\mathcal{D}.$ Moreover, for any $r>0$,
$$
C_{p,\varphi}(rB_2^n,B_2^n)=r^{n-p}\cdot \varphi \left(\frac{1}{r} \right)\cdot C_p (B_2^n)\ \ \text{and} \ \  C_{p,\varphi}(B_2^n,rB_2^n)=\varphi \left(r\right)\cdot C_p(B_2^n).
$$
These imply that $C_{p,\varphi}(\cdotp,\cdotp)$ is nonhomogeneous on $K$ and $L,$ if $\varphi$ is not a homogeneous function. The homogeneous analogue \cite{HYZ2017I}  is defined as follows.

\begin{definition} \label{definition-homogeneous}
Let $\varphi \in \mathcal{I}\cup \mathcal{D}$, $p\in(1,n)$, $K\in \mathcal{K}_0$ and $L\in \mathcal{S}_0$. Define $\widehat{C}_{p,\varphi}(K,L^\circ)$ by
\begin{equation}\label{definition of mixed volume for star body homogeneous}
\int_{S^{n-1}}\varphi\left( \frac{C_p(K)}{\widehat{C}_{p,\varphi}(K,L^\circ)\cdot\rho_L(u) \cdot h_{K}(u)} \right)d \mu^*_p(K,u)=1,
\end{equation}
where $\mu^*_p(K,\cdot)$ is the probability measure on $S^{n-1}$ associated with $K\in\mathcal{K}_0$ given in (\ref{measure-37}).
 \end{definition} Again, if $L\in \mathcal{K}_0$, one recovers the one given by (\ref{1.19}).   In fact, it can be easily checked  that, for $K, L\in \mathcal{K}_0$,  the following function 
$$G(\eta)=\int_{S^{n-1}}\varphi\left(\frac{C_p(K)\cdot h_{L}(u)}{\eta\cdot h_{K}(u)} \right)d \mu^*_p(K,u)$$
is continuous, strictly monotonic on $(0,\infty)$ and the range of $G(\eta)$ is $(0,\infty).$ These imply that $\widehat{C}_{p,\varphi}(K,L)$ is well-defined. Thus, for any $K,L \in \mathcal{K}_0,$ $\widehat{C}_{p,\varphi}(K,L)>0.$ In addition, as $\varphi(1)=1$ and $\mu^*_p(K,\cdot)$ is a probability measure on $S^{n-1}$, then for any $K\in\mathcal{K}_0,$ $\widehat{C}_{p,\varphi}(K,K)=C_p(K).$ Similar arguments hold for $\widehat{C}_{p,\varphi}(K,L^\circ)$ if $L\in \mathcal{S}_0$.  The following result for the homogeneity of $\widehat{C}_{p,\varphi}(\cdot,\cdot)$ follows immediately from (\ref{1.19}) and (\ref{definition of mixed volume for star body homogeneous}).

\begin{corollary} \label{corollary3.1}
Let $K,L \in \mathcal{K}_0$ and $s,t>0$. If $\varphi\in\mathcal{I} \cup \mathcal{D}$, then
$$
\widehat{C}_{p,\varphi}(s K,tL)=s^{n-p-1}\cdot t\cdot\widehat{C}_{p,\varphi}(K,L).
$$
When $L\in\mathcal{S}_0,$ then
$$
\widehat{C}_{p,\varphi}(s K,(tL)^\circ)=s^{n-p-1}\cdot t^{-1}\cdot\widehat{C}_{p,\varphi}(K,L^\circ).
$$
\end{corollary}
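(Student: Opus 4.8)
The plan is to substitute the elementary scaling rules directly into the implicit definition (\ref{definition of mixed volume for star body homogeneous}) (equivalently (\ref{1.19})) and then read off the homogeneity constant by appealing to the uniqueness of the root of the associated strictly monotone function. First I would assemble the scaling facts recorded in Section \ref{section:2}: for $s>0$ one has $h_{sK}=s\,h_K$, $C_p(sK)=s^{n-p}C_p(K)$ and $\mu_p(sK,\cdot)=s^{n-p-1}\mu_p(K,\cdot)$. Feeding these into the definition (\ref{measure-37}) of the probability measure $\mu^*_p$ gives $d\mu^*_p(sK,u)=\frac{p-1}{n-p}\cdot\frac{h_{sK}(u)}{C_p(sK)}\,d\mu_p(sK,u)=\frac{p-1}{n-p}\cdot\frac{s\,h_K(u)}{s^{n-p}C_p(K)}\cdot s^{n-p-1}\,d\mu_p(K,u)=d\mu^*_p(K,u)$, so that $\mu^*_p$ is scale-invariant: $\mu^*_p(sK,\cdot)=\mu^*_p(K,\cdot)$.

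Next I would write down the equation defining $\widehat{C}_{p,\varphi}(sK,tL)$ for $K,L\in\mathcal{K}_0$, namely $1=\int_{S^{n-1}}\varphi\big(C_p(sK)\,h_{tL}(u)/(\widehat{C}_{p,\varphi}(sK,tL)\,h_{sK}(u))\big)\,d\mu^*_p(sK,u)$, and substitute the rules above together with $h_{tL}=t\,h_L$. The scaling factors combine to $C_p(sK)\,h_{tL}/h_{sK}=s^{n-p-1}t\cdot C_p(K)\,h_L/h_K$, and the measure reduces to $\mu^*_p(K,\cdot)$, so the equation becomes $1=\int_{S^{n-1}}\varphi\big(s^{n-p-1}t\,C_p(K)\,h_L(u)/(\widehat{C}_{p,\varphi}(sK,tL)\,h_K(u))\big)\,d\mu^*_p(K,u)$. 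Comparing this with the defining equation for $\widehat{C}_{p,\varphi}(K,L)$ and using the fact—noted just after Definition \ref{definition-homogeneous}—that $\eta\mapsto\int_{S^{n-1}}\varphi\big(C_p(K)\,h_L(u)/(\eta\,h_K(u))\big)\,d\mu^*_p(K,u)$ is continuous, strictly monotone on $(0,\infty)$ with range $(0,\infty)$, and hence has a unique root, forces $\widehat{C}_{p,\varphi}(sK,tL)/(s^{n-p-1}t)=\widehat{C}_{p,\varphi}(K,L)$, which is the first identity. For $L\in\mathcal{S}_0$ the argument is identical: inserting $\rho_{tL}=t\,\rho_L$ into (\ref{definition of mixed volume for star body homogeneous}) produces the integrand $\varphi\big(s^{n-p-1}C_p(K)/(t\,\widehat{C}_{p,\varphi}(sK,(tL)^\circ)\,\rho_L(u)\,h_K(u))\big)$ against $\mu^*_p(K,\cdot)$, and uniqueness of the root yields $\widehat{C}_{p,\varphi}(sK,(tL)^\circ)=s^{n-p-1}t^{-1}\widehat{C}_{p,\varphi}(K,L^\circ)$.

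There is no substantial obstacle in this argument; it is essentially bookkeeping of exponents. The two points to handle with care are the scale-invariance of $\mu^*_p$ (which makes the comparison of the two integral equations legitimate, since both are integrated against the same measure) and the appearance of $\rho_{tL}=t\,\rho_L$ in a denominator in the star-body case, which is what turns the factor $t$ into $t^{-1}$. Both the well-posedness of the implicit definition and all required scaling identities have already been established in the excerpt, so the proof is a short direct verification.
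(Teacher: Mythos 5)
Your argument is correct and is exactly the verification the paper has in mind: the authors state that the corollary ``follows immediately'' from the defining equations and omit the details, which you have supplied accurately (in particular the key observation that $\mu^*_p(sK,\cdot)=\mu^*_p(K,\cdot)$ and the appeal to the unique root of the strictly monotone function $G$). No gaps; the exponent bookkeeping checks out in both the convex-body and star-body cases.
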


The following proposition deals with the continuity of $C_{p,\varphi}(\cdot,\cdot)$ and $\widehat{C}_{p,\varphi}(\cdot,\cdot)$.

\begin{proposition}\label{Proposition3.1}
Let ${\left\{K_i\right\}}_{i=1}^{\infty}\subseteq \mathcal{K}_0$ and ${\left\{L_i\right\}}_{i=1}^{\infty}\subseteq \mathcal{K}_0$ be two sequences of convex bodies such that $K_i\rightarrow K\in \mathcal{K}_0$ and $L_i\rightarrow L\in \mathcal{K}_0$ as $i\rightarrow\infty$. If $\varphi\in \mathcal{I}\cup\mathcal{D}$, then\\
$$C_{p,\varphi}(K_i,L_i)\rightarrow C_{p,\varphi}(K,L)\ \ \mbox{and}\ \ \ \widehat{C}_{p,\varphi}(K_i,L_i)\rightarrow \widehat{C}_{p,\varphi}(K,L)\ \ \ \text{as}\ \ i\rightarrow\infty.$$
\end{proposition}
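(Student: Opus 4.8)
The plan is to establish both convergences by reducing them to the uniform convergence of the relevant support/radial functions together with the weak convergence of the $p$-capacitary measures $\mu_p(K_i,\cdot)\to\mu_p(K,\cdot)$ (which is part (v) of Definition~\ref{definition 2.1}, since $C_p$ is a variational functional compatible with the mixed volume), and the continuity of $C_p(\cdot)$ on $\mathcal{K}_0$. First I would record the standard facts that $K_i\to K$ in the Hausdorff metric implies $h_{K_i}\to h_K$ uniformly on $S^{n-1}$, and likewise $L_i\to L$ implies $h_{L_i}\to h_L$ uniformly; since $K,L\in\mathcal{K}_0$ there are constants $0<r_0\le R_0$ with $r_0\le h_{K_i},h_{L_i},h_K,h_L\le R_0$ on $S^{n-1}$ for all large $i$. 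Consequently the arguments $h_{L_i}(u)/h_{K_i}(u)$ stay in a compact subinterval $[r_0/R_0,\ R_0/r_0]\subseteq(0,\infty)$, on which $\varphi$ is uniformly continuous; hence $\varphi(h_{L_i}/h_{K_i})\,h_{K_i}\to \varphi(h_L/h_K)\,h_K$ uniformly on $S^{n-1}$. Combining this uniform convergence with the weak convergence $\mu_p(K_i,\cdot)\to\mu_p(K,\cdot)$ via Lemma~\ref{uniformly converge-lemma} gives
$$C_{p,\varphi}(K_i,L_i)=\frac{p-1}{n-p}\int_{S^{n-1}}\varphi\!\left(\frac{h_{L_i}(u)}{h_{K_i}(u)}\right)h_{K_i}(u)\,d\mu_p(K_i,u)\ \longrightarrow\ C_{p,\varphi}(K,L).$$

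For the homogeneous quantity $\widehat{C}_{p,\varphi}(\cdot,\cdot)$ I would argue in two stages. First, show that the sequence $\{\widehat{C}_{p,\varphi}(K_i,L_i)\}_{i=1}^\infty$ is bounded away from $0$ and from $\infty$. The lower and upper bounds follow from the monotonicity of $\varphi$ together with the uniform bounds $r_0\le h_{K_i},h_{L_i}\le R_0$ and $C_p(K_i)\in[c_0,C_0]$ (by continuity of $C_p$): since $\mu^*_p(K_i,\cdot)$ is a probability measure, evaluating the defining identity $\int_{S^{n-1}}\varphi\big(C_p(K_i)h_{L_i}/(\widehat{C}_{p,\varphi}(K_i,L_i)\,h_{K_i})\big)\,d\mu^*_p(K_i,u)=1$ and using that $\varphi$ ranges over $(0,\infty)$ shows that if $\widehat{C}_{p,\varphi}(K_i,L_i)$ were to tend to $0$ or $\infty$ along a subsequence, the integrand would be forced (for $\varphi\in\mathcal{I}$, say) uniformly above or below $1$, a contradiction; the case $\varphi\in\mathcal{D}$ is symmetric. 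Second, pass to any subsequence along which $\widehat{C}_{p,\varphi}(K_i,L_i)\to \eta_0\in(0,\infty)$. Then the integrand $\varphi\big(C_p(K_i)h_{L_i}/(\widehat{C}_{p,\varphi}(K_i,L_i)h_{K_i})\big)$ converges uniformly on $S^{n-1}$ to $\varphi\big(C_p(K)h_L/(\eta_0 h_K)\big)$, because all the pieces converge uniformly, the denominators are uniformly bounded below, and $\varphi$ is uniformly continuous on the relevant compact subinterval. Moreover $\mu^*_p(K_i,\cdot)\to\mu^*_p(K,\cdot)$ weakly: this follows from $d\mu^*_p(K_i,u)=\frac{p-1}{n-p}\frac{h_{K_i}(u)}{C_p(K_i)}\,d\mu_p(K_i,u)$, the uniform convergence $h_{K_i}/C_p(K_i)\to h_K/C_p(K)$, the weak convergence $\mu_p(K_i,\cdot)\to\mu_p(K,\cdot)$, and Lemma~\ref{uniformly converge-lemma}. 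Applying Lemma~\ref{uniformly converge-lemma} once more to the defining identity yields $\int_{S^{n-1}}\varphi\big(C_p(K)h_L/(\eta_0 h_K)\big)\,d\mu^*_p(K,u)=1$, which by the strict monotonicity of $G(\eta)=\int_{S^{n-1}}\varphi\big(C_p(K)h_L/(\eta h_K)\big)\,d\mu^*_p(K,u)$ forces $\eta_0=\widehat{C}_{p,\varphi}(K,L)$. Since every convergent subsequence has the same limit and the sequence is bounded, $\widehat{C}_{p,\varphi}(K_i,L_i)\to\widehat{C}_{p,\varphi}(K,L)$.

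The main obstacle, and the step needing the most care, is the second stage for $\widehat{C}_{p,\varphi}$: one must ensure the argument of $\varphi$ inside the defining integral stays in a fixed compact subinterval of $(0,\infty)$ uniformly in $i$, which is exactly why the a priori two-sided bounds on $\widehat{C}_{p,\varphi}(K_i,L_i)$ must be proved first; without them, $\varphi$ could be evaluated near $0$ or $\infty$ where no uniform continuity or uniform convergence of the integrand is available. Everything else is an application of Lemma~\ref{uniformly converge-lemma}, the continuity of $C_p(\cdot)$, and the weak convergence of $\mu_p(K_i,\cdot)$ guaranteed by Definition~\ref{definition 2.1}(v). I would also note that the case distinction $\varphi\in\mathcal{I}$ versus $\varphi\in\mathcal{D}$ only affects the direction of the inequalities in the a priori bounds and can be handled uniformly by working with monotonicity rather than a specific sign.
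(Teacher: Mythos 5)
Your proposal is correct and follows essentially the same route as the paper: uniform convergence of the support functions plus weak convergence of $\mu_p(K_i,\cdot)$ and Lemma \ref{uniformly converge-lemma} for $C_{p,\varphi}$, and a priori two-sided bounds on $\widehat{C}_{p,\varphi}(K_i,L_i)$ followed by identification of the limit through the defining integral identity for $\widehat{C}_{p,\varphi}$. The only difference is cosmetic: you extract convergent subsequences and invoke the strict monotonicity of $G(\eta)$, whereas the paper runs a $\limsup$/$\liminf$ squeeze using the monotonicity of $\varphi$ directly; both are valid and of comparable length.
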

\begin{proof}
As $L_i\rightarrow L$, then $h_{L_i}\rightarrow h_L$ uniformly on $S^{n-1}$. Similarly, the convergence of $K_i\rightarrow K$ implies that $h_{K_i}\rightarrow h_K$ uniformly on $S^{n-1},$ $C_p(K_i)\rightarrow C_p(K)$ and $\mu_p(K_i,\cdotp)$ converges weakly to $\mu_p(K,\cdotp)$ (see e.g. \cite{Cole2015}). In addition, there exist two constants $r,R>0,$ such that, for any $i\geq1$
\be\label{bounded-uniformly1}r\cdot B^n_2 \subseteq K_i,K,L_i,L\subseteq R\cdot B^n_2,\ee
and hence for any $i\geq 1$ and $u\in S^{n-1},$
\be\label{bounded-uniformly2}
\frac{r}{R}\leq \frac{h_{L_i}(u)}{h_{K_i}(u)}, \  \frac{h_{L}(u)}{h_{K}(u)}\leq \frac{R}{r}.
\ee
Since $\varphi$ is continuous on the interval $\bigg[\displaystyle{\frac{r}{R},\frac{R}{r}}\bigg]$, then
$$\varphi\bigg(\frac{h_{L_i}(u)}{h_{K_i}(u)}\bigg)\rightarrow \varphi\bigg(\frac{h_{L}(u)}{h_{K}(u)}\bigg)\ \ \ \text{uniformly on} \ \ S^{n-1}.$$
Together with Lemma \ref{uniformly converge-lemma}, one gets
$$ \frac{p-1}{n-p}\int_{S^{n-1}}\varphi\bigg(\frac{h_{L_i}(u)}{h_{K_i}(u)}\bigg)h_{K_i}(u)\,d \mu_p(K_i,u)\rightarrow \frac{p-1}{n-p}\int_{S^{n-1}}\varphi\bigg(\frac{h_{L}(u)}{h_{K}(u)}\bigg)\,h_K(u)\,d \mu_p(K,u),$$
and hence $C_{p,\varphi}(K_i,L_i)\rightarrow C_{p,\varphi}(K,L)$ as $i\rightarrow\infty$.

For the case $\widehat{C}_{p,\varphi}(\cdot,\cdot),$ we only prove the case $\varphi\in\mathcal{I},$
and the case $\varphi\in\mathcal{D}$ follows along the same argument. 
It follows from the monotonicity of $C_p(\cdot)$ and $\varphi$, (\ref{bounded-uniformly1}) and (\ref{bounded-uniformly2}) that
\begin{eqnarray*} 
1&=&\int_{S^{n-1}}\varphi\bigg(\frac{C_p(K_i)\cdot h_{L_i}(u)}{\widehat{C}_{p,\varphi}(K_i,L_i)\cdot h_{K_i}(u)} \bigg)\,d \mu^*_p(K_i,u)\leq \varphi\bigg(\frac{C_p(R\cdot B_2^n)\cdot R}{\widehat{C}_{p,\varphi}(K_i,L_i)\cdot r}\bigg);\\
1&=&\int_{S^{n-1}}\varphi\bigg( \frac{C_p(K_i)\cdot h_{L_i}(u)}{\widehat{C}_{p,\varphi}(K_i,L_i)\cdot h_{K_i}(u)} \bigg)\,d \mu^*_p(K_i,u)\geq\varphi\bigg(\frac{C_p(r\cdot B_2^n)\cdot r}{\widehat{C}_{p,\varphi}(K_i,L_i)\cdot R}\bigg).
\end{eqnarray*} 
Combining with the fact that $\varphi(1)=1$, one gets, for any $ i\geq1$,
$$
0< \frac{C_p(r\cdot B_2^n)\cdot r}{R} \leq\widehat{C}_{p,\varphi}(K_i,L_i)\leq \frac{C_p(R\cdot B_2^n)\cdot R}{r}<\infty.
$$

Let
$$S=\limsup_{i\rightarrow \infty}\widehat{C}_{p,\varphi}(K_i,L_i)<\infty\ \ \text{and}\ \ I=\liminf_{i\rightarrow \infty}\widehat{C}_{p,\varphi}(K_i,L_i)>0.$$
Thus there exists a subsequence $\left\{\widehat{C}_{p,\varphi}(K_{i_k},L_{i_k})\right\}_{k=1}^{\infty}$ such that
$$
\frac{k}{k+1}S<\widehat{C}_{p,\varphi}(K_{i_k},L_{i_k})\,\,\ \text{for any} \ k \geq1\,\, \text{and} \, \lim_{k \rightarrow \infty}\widehat{C}_{p,\varphi}(K_{i_k},L_{i_k})=S.$$
These along with the fact that $\varphi$ is increasing and Lemma \ref{uniformly converge-lemma} yield
\begin{eqnarray*}\nonumber
1&=&\lim_{k\rightarrow \infty}\int_{S^{n-1}}\varphi \bigg(\frac{C_p(K_{i_k})\cdot h_{L_{i_k}}(u)}{\widehat{C}_{p,\varphi}(K_{i_k},L_{i_k})\cdot h_{K_{i_k}}(u)} \bigg)\,d \mu^*_p(K_{i_k},u)\\ \nonumber
&\leq& \lim_{k\rightarrow \infty}\int_{S^{n-1}}\varphi \bigg(\frac{(k+1)\cdot C_p(K_{i_k})\cdot h_{L_{i_k}}(u)}{k\cdot S\cdot h_{K_{i_k}}(u)} \bigg)\,d \mu^*_p(K_{i_k},u)\\ \nonumber
&=&\int_{S^{n-1}}\varphi \bigg(\frac{C_p(K)\cdot h_{L}(u)}{S\cdot h_{K}(u)}\bigg)\,d \mu^*_p(K,u).
\end{eqnarray*}
Similarly, there exists a subsequence $\left\{\widehat{C}_{p,\varphi}(K_{i_l},L_{i_l})\right\}_{l=1}^{\infty}$ such that
$$
\frac{l+1}{l}I>\widehat{C}_{p,\varphi}(K_{i_l},L_{i_l})\,\,\ \,\text{for any} \  l \geq1\,\  \text{and} \, \ \lim_{l \rightarrow \infty}\widehat{C}_{p,\varphi}(K_{i_l},L_{i_l})=I.$$
Hence
\begin{eqnarray*}\nonumber
1&=&\lim_{l\rightarrow \infty}\int_{S^{n-1}}\varphi \bigg(\frac{C_p(K_{i_l})\cdot h_{L_{i_l}}(u)}{\widehat{C}_{p,\varphi}(K_{i_l},L_{i_l})\cdot h_{K_{i_l}}(u)}\bigg)\,d \mu^*_p(K_{i_l},u)\\ \nonumber
&\geq& \lim_{l\rightarrow \infty}\int_{S^{n-1}}\varphi \bigg(\frac{l\cdot C_p(K_{i_l})\cdot h_{L_{i_l}}(u)}{(l+1)\cdot I\cdot h_{K_{i_l}}(u)}\bigg)\,d \mu^*_p(K_{i_l},u)\\ \nonumber
&=&\int_{S^{n-1}}\varphi \bigg(\frac{C_p(K)\cdot h_{L}(u)}{I\cdot h_{K}(u)}\bigg)\,d \mu^*_p(K,u).
\end{eqnarray*}
Together with (\ref{1.19}), one gets:
$$\limsup_{i\ \rightarrow \infty}\widehat{C}_{p,\varphi}(K_i,L_i)\leq \widehat{C}_{p,\varphi}(K,L)\leq \liminf_{i\rightarrow \infty}\widehat{C}_{p,\varphi}(K_i,L_i),$$  and hence $\widehat{C}_{p,\varphi}(K_i,L_i)\rightarrow \widehat{C}_{p,\varphi}(K,L)$ as $i\rightarrow\infty$ as desired.
\end{proof}
 
The following proposition is needed.
\begin{proposition}\label{proposition3.3}
Let $\{K_i\}_{i=1}^\infty\subseteq\mathcal{K}_0$ and $K\in \mathcal{K}_0$ be such that $K_i \rightarrow K$ as $i\rightarrow \infty.$ Let $\{M_i\}_{i=1}^\infty \subseteq \mathcal{K}_0$ and $\varphi\in\mathcal{I}$ be such that $\{C_{p,\varphi}(K_i,M_i)\}_{i=1}^\infty$ or $\{\widehat{C}_{p,\varphi}(K_i,M_i)\}_{i=1}^\infty$ is bounded. Then $\{M_i\}_{i=1}^\infty$ is uniformly bounded.
\end{proposition}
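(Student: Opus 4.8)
The plan is to argue by contradiction, reusing the unboundedness device from the proof of Theorem \ref{Theorem3.1}. Suppose $\{M_i\}_{i=1}^\infty$ is \emph{not} uniformly bounded. Put $R_i:=\rho_{M_i}(u_i)=\max_{u\in S^{n-1}}\rho_{M_i}(u)$, the circumradius of $M_i$; then $\sup_i R_i=\infty$, and after passing to a subsequence (harmless, since all the hypotheses are inherited) we may assume $R_i\to\infty$ and, by compactness of $S^{n-1}$, that $u_i\to v\in S^{n-1}$. Exactly as in Theorem \ref{Theorem3.1}, $h_{M_i}(u)\ge R_i\langle u,u_i\rangle_+$ for every $u\in S^{n-1}$ and every $i$. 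Since $K_i\to K\in\mathcal{K}_0$, there are constants $0<r\le R$ with $rB_2^n\subseteq K_i\subseteq RB_2^n$ for all $i$ (hence $r\le h_{K_i}\le R$ on $S^{n-1}$), $C_p(K_i)\to C_p(K)>0$ (so $c_0:=\inf_i C_p(K_i)>0$), and $\mu_p(K_i,\cdot)\to\mu_p(K,\cdot)$ weakly; combining Lemma \ref{uniformly converge-lemma}, the definition (\ref{measure-37}), and $h_{K_i}\to h_K$ uniformly, one also gets $\mu^*_p(K_i,\cdot)\to\mu^*_p(K,\cdot)$ weakly. Finally, both $\mu_p(K,\cdot)$ and $\mu^*_p(K,\cdot)$ are not concentrated on any hemisphere of $S^{n-1}$ (for $\mu^*_p(K,\cdot)$ this follows from the same property of $\mu_p(K,\cdot)$, since the density in (\ref{measure-37}) is bounded below by a positive constant).

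Consider first the hypothesis $C_{p,\varphi}(K_i,M_i)\le A<\infty$ for all $i$. Using that $\varphi$ is increasing, $r\le h_{K_i}\le R$, and $h_{M_i}(u)\ge R_i\langle u,u_i\rangle_+$, we obtain, for any fixed $C>0$ and all $i$ large enough that $R_i\ge C$,
$$A\ \ge\ C_{p,\varphi}(K_i,M_i)\ =\ \frac{p-1}{n-p}\int_{S^{n-1}}\varphi\!\left(\frac{h_{M_i}(u)}{h_{K_i}(u)}\right)h_{K_i}(u)\,d\mu_p(K_i,u)\ \ge\ \frac{(p-1)r}{n-p}\int_{S^{n-1}}\varphi\!\left(\frac{C\langle u,u_i\rangle_+}{R}\right)d\mu_p(K_i,u).$$
Because $u_i\to v$, the integrand converges uniformly on $S^{n-1}$, so Lemma \ref{uniformly converge-lemma} lets us pass to the limit in $i$; then, choosing $j_0$ so that $\mu_p\big(K,\{u:\langle u,v\rangle_+\ge 1/j_0\}\big)>0$ (possible by the monotone convergence argument leading to (\ref{3.2'}), since $\mu_p(K,\cdot)\in\Omega$) and using monotonicity of $\varphi$ once more,
$$A\ \ge\ \frac{(p-1)r}{n-p}\int_{S^{n-1}}\varphi\!\left(\frac{C\langle u,v\rangle_+}{R}\right)d\mu_p(K,u)\ \ge\ \frac{(p-1)r}{n-p}\,\varphi\!\left(\frac{C}{Rj_0}\right)\mu_p\big(K,\{u:\langle u,v\rangle_+\ge 1/j_0\}\big).$$
Letting $C\to\infty$ and using $\varphi\in\mathcal{I}$ (so $\varphi(C/(Rj_0))\to\infty$) forces $A=\infty$, a contradiction.

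For the hypothesis $\widehat{C}_{p,\varphi}(K_i,M_i)\le\widehat A<\infty$, start from the defining identity (\ref{1.19}). Since $\varphi$ is increasing, $C_p(K_i)\ge c_0$, $h_{K_i}\le R$, $\widehat{C}_{p,\varphi}(K_i,M_i)\le\widehat A$, and $h_{M_i}(u)\ge R_i\langle u,u_i\rangle_+$, we get, for $R_i\ge C$,
$$1\ =\ \int_{S^{n-1}}\varphi\!\left(\frac{C_p(K_i)\,h_{M_i}(u)}{\widehat{C}_{p,\varphi}(K_i,M_i)\,h_{K_i}(u)}\right)d\mu^*_p(K_i,u)\ \ge\ \int_{S^{n-1}}\varphi\!\left(\frac{c_0C\langle u,u_i\rangle_+}{\widehat AR}\right)d\mu^*_p(K_i,u).$$
Passing to the limit in $i$ via Lemma \ref{uniformly converge-lemma} (now with $\mu^*_p(K_i,\cdot)\to\mu^*_p(K,\cdot)$) and then restricting to $\{u:\langle u,v\rangle_+\ge 1/j_0\}$ exactly as above yields $1\ge\varphi\!\big(c_0C/(\widehat ARj_0)\big)\,\mu^*_p\big(K,\{u:\langle u,v\rangle_+\ge 1/j_0\}\big)\to\infty$ as $C\to\infty$, again a contradiction. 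Hence $\{M_i\}_{i=1}^\infty$ is uniformly bounded.

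I expect the only genuinely delicate point to be the simultaneous handling of the weak convergence $\mu_p(K_i,\cdot)\to\mu_p(K,\cdot)$ (resp.\ $\mu^*_p(K_i,\cdot)\to\mu^*_p(K,\cdot)$) and the \emph{non-uniform} lower bound $h_{M_i}\ge R_i\langle\cdot,u_i\rangle_+$; this is resolved precisely by freezing the auxiliary constant $C$, letting $i\to\infty$ first, and only afterwards letting $C\to\infty$ — the same mechanism already used in Theorems \ref{Theorem3.1} and \ref{Theorem 3.2}. Everything else is routine.
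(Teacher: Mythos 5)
Your proposal is correct and follows essentially the same route as the paper's proof: assume $\sup_i R_i=\infty$, use $h_{M_i}\ge R_i\langle\cdot,u_i\rangle_+$ together with the uniform bounds $r\le h_{K_i}\le R$ and the weak convergence of the capacitary measures, freeze the constant $C$, let $i\to\infty$ first and then $C\to\infty$ to contradict the boundedness hypothesis. The only cosmetic difference is that you phrase the homogeneous case via weak convergence of $\mu^*_p(K_i,\cdot)$ while the paper converts back to $\mu_p(K,\cdot)$ using the explicit density in (\ref{measure-37}); both are equivalent.
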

\begin{proof} Note that $\mu_p(K,\cdot)$ is not concentrated on any hemisphere of $S^{n-1}$. Hence, for any given $v\in S^{n-1}$, 
\begin{equation*}  
0<\int_{S^{n-1}}\langle u,v \rangle_+\,d \mu_p(K,u)=\lim_{j\rightarrow \infty}\int_{\{u\in S^{n-1}:\,\langle u,v \rangle_+\geq\frac{1}{j}\}}\langle u,v \rangle_+ \,d \mu_p(K,u).
\end{equation*}
Thus there exists an integer $j_0 \in N$ such that
\begin{equation} \label{not concentrated on any hemisphere}
\int_{\{u\in S^{n-1}:\,\langle u,v \rangle_+\geq\frac{1}{j_0}\}}\langle u,v \rangle_+ \,d \mu_p(K,u)>0.
\end{equation}

As $K_i\rightarrow K,$ then $h_{K_i}\rightarrow h_K$ uniformly on $S^{n-1},$  $C_p(K_i)\rightarrow C_p(K)$ and $\mu_p(K_i,\cdotp)$ converges weakly to $\mu_p(K,\cdotp).$ Again, one can find $r_0, R_0>0$, such that, $r_0 \leq h_K(u), \, h_{K_i}(u)\leq R_0$  for any $i\geq 1$ and any $u\in S^{n-1}$.  Let $R_i=\rho_{M_i}(u_i)=\max_{u \in S^{n-1}} \{ \rho_{M_i}(u) \}$ and then  
$h_{M_i}(u) \geq R_i \cdot \langle u,u_i \rangle_+$  for any $u \in S^{n-1}$ and any $i\geq 1$. 
As $S^{n-1}$ is compact, without loss of generality, let $u_i \rightarrow v\in S^{n-1}$ as $i\rightarrow\infty$.

Firstly, we consider the case that $\{\widehat{C}_{p,\varphi}(K_i,M_i)\}_{i=1}^\infty$ is bounded. Then there exists a constant $B>0$ such that $B \geq \widehat{C}_{p,\varphi}(K_i,M_i)$ for any $i\geq1$. Suppose that $M_i$ is not bounded uniformly, i.e., $\sup_{i\geq 1} R_i=\infty.$  Without loss of generality, assume $R_{i} \rightarrow \infty$ as $i\rightarrow \infty$.  By the monotonicity of $\varphi$ and Lemma \ref{uniformly converge-lemma}, one has, for any constant $C>0$, 
\begin{eqnarray*}
1&=&\lim_{i\rightarrow \infty}\int_{S^{n-1}}\varphi \bigg(\frac{C_p(K_i)\cdot h_{M_{i}}(u)}{\widehat{C}_{p,\varphi}(K_{i},M_{i})\cdot h_{K_{i}}(u)}\bigg)\,d\mu^*_p(K_{i},u)\\
&\geq& \liminf_{i\rightarrow \infty}\int_{S^{n-1}}\varphi \bigg(\frac{C_p(K_i)\cdot R_i \cdot \langle u,u_i \rangle_+}{B\cdot R_0}\bigg)\,d \mu^*_p(K_{i},u)\\
&\geq& \liminf_{i\rightarrow \infty}\int_{S^{n-1}}\varphi \bigg(\frac{C_p(K_i)\cdot C \cdot \langle u,u_i \rangle_+}{B\cdot R_0}\bigg)\,d \mu^*_p(K_{i},u)\\
&=&\int_{S^{n-1}}\liminf_{i\rightarrow \infty}\varphi \bigg(\frac{C_p(K_i)\cdot C \cdot \langle u,u_i \rangle_+}{B\cdot R_0}\bigg)\,d \mu^*_p(K,u)\\
&=&\int_{S^{n-1}}\varphi \bigg(\frac{C_p(K)\cdot C\cdot \langle u,v \rangle_+}{B\cdot R_0}\bigg)\,d \mu^*_p(K,u)\\
&\geq& \varphi \bigg(\frac{C_p(K)\cdot C}{B\cdot R_0\cdot j_0}\bigg)\,\frac{(p-1)\cdot r_0}{(n-p)\cdot C_p(K)}\cdot\int_{\{u\in S^{n-1}:\,\langle u,v \rangle_+\geq\frac{1}{j_0}\}}\langle u,v \rangle_+ \,d \mu_p(K,u).
\end{eqnarray*}
A contradiction $1\geq \infty$ is obtained by  (\ref{not concentrated on any hemisphere}) and letting $C\rightarrow \infty$. Hence $\sup_{i\geq 1} R_i<\infty$.

Similarly, if $\{C_{p,\varphi}(K_i,M_i)\}_{i=1}^\infty$ is bounded, then there exists a positive constant $B>0$ such that $B \geq C_{p,\varphi}(K_i,M_i)$ for any $i\geq1$. Thus, for any given constant $C>0$, one has
\begin{eqnarray*}
B&\geq& \liminf_{i\rightarrow\infty}\frac{p-1}{n-p}\int_{S^{n-1}}\varphi \bigg(\frac{h_{M_{i}}(u)}{ h_{K_{i}}(u)}\bigg)h_{K_i}(u)\,d\mu_p(K_{i},u)\\
&\geq &\liminf_{i\rightarrow \infty}\frac{p-1}{n-p}\int_{S^{n-1}}\varphi \bigg(\frac{ C \cdot \langle u,u_i \rangle_+}{R_0}\bigg)h_{K_i}(u)\,d \mu_p(K_{i},u)\\
&=&\frac{p-1}{n-p}\int_{S^{n-1}}\varphi \bigg(\frac{ C\cdot \langle u,v \rangle_+}{R_0}\bigg)h_K(u)\,d \mu_p(K,u)\\
&\geq &\frac{p-1}{n-p}\cdot r_0\cdot\varphi \bigg(\frac{C}{R_0\cdot j_0}\bigg)\,\cdot\int_{\{u\in S^{n-1}:\,\langle u,v \rangle_+\geq\frac{1}{j_0}\}}\langle u,v \rangle_+ \,d \mu_p(K,u).
\end{eqnarray*}
A contradiction $B\geq \infty$ is obtained if let $C\rightarrow \infty$ and hence $\sup_{i\geq 1} R_i<\infty$.
\end{proof}

\subsection{The $p$-capacitary
Orlicz-Petty bodies}
In this subsection, we will investigate the existence, uniqueness and continuity of the $p$-capacitary Orlicz-Petty bodies. Like the polar Orlicz-Minkowski problems in Section \ref{section:3}, we are interested in the following optimization problems for the nonhomoheneous/homogeneous  $L_\varphi$ Orlicz mixed $p$-capacity:
\begin{eqnarray}  &&\sup/\inf \Big\{C_{p,\varphi}(K,L):L\in\mathcal{K}_0 \  \text{and}\ |L^\circ|=\omega_{n}\Big\}, \label{4.5}\\
&&\sup/\inf \Big\{\widehat{C}_{p,\varphi}(K,L):L\in\mathcal{K}_0 \  \text{and}\ |L^\circ|=\omega_{n}\Big\}.\label{4.6}\end{eqnarray}
Our main result is the following theorem which establishes the solvability of $(\ref{4.5})$ and $(\ref{4.6})$ under certain conditions.
\begin{theorem} \label{Theorem existence of petty bodies}
\noindent Let $K \in \mathcal{K}_0$ be a convex body and $\varphi \in \mathcal{I}.$

\noindent(i) There exists a convex body $M\in\mathcal{K}_0$ such that $|M^\circ|=\omega_{n}$ and
\begin{eqnarray}\nonumber
 C_{p,\varphi}(K,M)=\inf \Big\{C_{p,\varphi}(K,L):L\in\mathcal{K}_0 \ \ \mbox{and}\ \ |L^\circ|=\omega_{n}\Big\}.
 \end{eqnarray}
\noindent(ii) There exists a convex body $\widehat{M}\in\mathcal{K}_0$ such that $|\widehat{M}^\circ|=\omega_{n}$ and
$$ \widehat{C}_{p,\varphi}(K,\widehat{M})=\inf \Big\{\widehat{C}_{p,\varphi}(K,L):L\in\mathcal{K}_0 \ \ \mbox{and}\ \ |L^\circ|=\omega_{n}\Big\}.$$
In addition, if $\varphi\in\mathcal{I}$ is convex, then both $M$ and $\widehat{M}$ are unique.
\end{theorem}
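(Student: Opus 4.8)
The plan is to mimic closely the proof of Theorem~\ref{Theorem3.1}, now using the continuity results (Proposition~\ref{Proposition3.1}) and the uniform boundedness result (Proposition~\ref{proposition3.3}) already established for the Orlicz mixed $p$-capacities. Fix $K\in\mathcal{K}_0$ and $\varphi\in\mathcal{I}$, and note first that both infima in (i) and (ii) are finite: taking $L=B^n_2$ (which satisfies $|L^\circ|=\omega_n$) gives $C_{p,\varphi}(K,B^n_2)<\infty$ and $\widehat{C}_{p,\varphi}(K,B^n_2)<\infty$, and both quantities are positive since $\varphi>0$. For part (i), take a minimizing sequence $\{M_i\}_{i=1}^\infty\subseteq\mathcal{K}_0$ with $|M_i^\circ|=\omega_n$ and $C_{p,\varphi}(K,M_i)\to \inf\{C_{p,\varphi}(K,L):L\in\mathcal{K}_0,\ |L^\circ|=\omega_n\}$. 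Since the sequence $\{C_{p,\varphi}(K,M_i)\}_{i=1}^\infty$ is bounded, applying Proposition~\ref{proposition3.3} with $K_i\equiv K$ shows $\{M_i\}_{i=1}^\infty$ is uniformly bounded; together with $|M_i^\circ|=\omega_n$, Lemma~\ref{blaschke-selection} provides a subsequence converging to some $M\in\mathcal{K}_0$ with $|M^\circ|=\omega_n$. By the continuity statement in Proposition~\ref{Proposition3.1} (again with $K_i\equiv K$), $C_{p,\varphi}(K,M_{i_j})\to C_{p,\varphi}(K,M)$, so $C_{p,\varphi}(K,M)$ equals the infimum, proving (i). Part (ii) is identical with $C_{p,\varphi}$ replaced by $\widehat{C}_{p,\varphi}$ throughout, using the corresponding halves of Propositions~\ref{Proposition3.1} and~\ref{proposition3.3}.

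For the uniqueness when $\varphi\in\mathcal{I}$ is convex, I would again follow the midpoint-body argument of Theorem~\ref{Theorem3.1}. Suppose $M_1,M_2\in\mathcal{K}_0$ with $|M_1^\circ|=|M_2^\circ|=\omega_n$ both attain the infimum in (i). Let $M_0=\tfrac{1}{2}(M_1+M_2)$; since $h_{M_0}=\tfrac12(h_{M_1}+h_{M_2})$ and $t\mapsto t^{-n}$ is strictly convex, formulas (\ref{measure-3}) and (\ref{measure-4}) give $\mathrm{vrad}(M_0^\circ)\le 1$, with equality iff $M_1=M_2$. Then, writing $\lambda_0=\mathrm{vrad}(M_0^\circ)$ so that $|(\lambda_0^{-1}M_0)^\circ|=\omega_n$, and using that $\varphi$ is strictly increasing and convex (hence $\varphi(\tfrac12(a+b))\le\tfrac12(\varphi(a)+\varphi(b))$ pointwise inside the integrand of (\ref{definition of mixed volume for star body nonhomogeneous})),
\begin{eqnarray*}
\inf\nolimits_L C_{p,\varphi}(K,L)
&\le& C_{p,\varphi}(K,\lambda_0^{-1}M_0)
= \frac{p-1}{n-p}\int_{S^{n-1}}\varphi\Big(\frac{\lambda_0 h_{M_0}(u)}{h_K(u)}\Big)h_K(u)\,d\mu_p(K,u)\\
&\le& \frac{p-1}{n-p}\int_{S^{n-1}}\varphi\Big(\frac{h_{M_0}(u)}{h_K(u)}\Big)h_K(u)\,d\mu_p(K,u)\\
&\le& \frac12\big(C_{p,\varphi}(K,M_1)+C_{p,\varphi}(K,M_2)\big)=\inf\nolimits_L C_{p,\varphi}(K,L),
\end{eqnarray*}
so all inequalities are equalities; the first forces $\lambda_0=1$, hence $M_1=M_2$. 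For $\widehat{M}$ the same scheme works, but one must verify that $\widehat{C}_{p,\varphi}$ enjoys the analogous convexity-and-monotonicity behavior in its second argument; this follows from the defining relation (\ref{definition of mixed volume for star body homogeneous}): if $h_{L}\le h_{L'}$ pointwise then the integrand for $L'$ is pointwise larger at a common value of the functional, forcing $\widehat{C}_{p,\varphi}(K,L)\le\widehat{C}_{p,\varphi}(K,L')$, and the convexity of $\varphi$ together with $\widehat{C}_{p,\varphi}(sK,tL)=s^{n-p-1}t\,\widehat{C}_{p,\varphi}(K,L)$ (Corollary~\ref{corollary3.1}) yields $\widehat{C}_{p,\varphi}(K,\tfrac12(L_1+L_2))\le\tfrac12(\widehat{C}_{p,\varphi}(K,L_1)+\widehat{C}_{p,\varphi}(K,L_2))$ when the two have equal value.

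The main obstacle, and the only point requiring genuine care, is the uniqueness part for $\widehat{M}$: unlike the nonhomogeneous case where convexity passes directly under the integral sign, for $\widehat{C}_{p,\varphi}$ one works with an implicitly-defined functional and must argue monotonicity and midpoint-convexity in the second slot from the implicit equation (\ref{definition of mixed volume for star body homogeneous}) rather than from a closed-form integral. I expect this to go through by the standard trick: given $\eta_1=\widehat{C}_{p,\varphi}(K,L_1)$ and $\eta_2=\widehat{C}_{p,\varphi}(K,L_2)$, one shows that plugging $\eta=\tfrac12(\eta_1+\eta_2)$ and $L=\tfrac12(L_1+L_2)$ into the left side of (\ref{definition of mixed volume for star body homogeneous}) yields a value $\le 1$ by pointwise convexity of $\varphi$, hence $\widehat{C}_{p,\varphi}(K,\tfrac12(L_1+L_2))\le\tfrac12(\eta_1+\eta_2)$; combined with $\mathrm{vrad}(M_0^\circ)\le1$ and the homogeneity from Corollary~\ref{corollary3.1}, the same chain of equalities as above forces $\widehat{M}_1=\widehat{M}_2$. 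Everything else is a routine transcription of the arguments already carried out for $\widehat{\mathcal{G}}_\varphi$ in Section~\ref{section:3}.
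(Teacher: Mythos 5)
Your proposal is correct and follows essentially the same route as the paper: the identical compactness scheme for existence (boundedness of the minimizing sequence via Proposition \ref{proposition3.3} with $K_i\equiv K$, Lemma \ref{blaschke-selection}, then continuity from Proposition \ref{Proposition3.1}), and the same midpoint/volume-radius argument with convexity of $\varphi$ for uniqueness, including the implicit-equation trick for $\widehat{C}_{p,\varphi}$ combined with the homogeneity from Corollary \ref{corollary3.1}. The only slip is notational: the dilate of $M_0$ whose polar has volume $\omega_n$ is $\mathrm{vrad}(M_0^\circ)\,M_0$, not $\mathrm{vrad}(M_0^\circ)^{-1}M_0$, though your displayed integrand $\varphi\big(\lambda_0 h_{M_0}/h_K\big)$ already corresponds to the correct body, so the chain of inequalities stands.
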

\begin{proof}
For convenience, let
\begin{eqnarray}\mathcal{G}_{p,\varphi}^{orlicz}(K)=\inf \Big\{C_{p,\varphi}(K,L):L\in\mathcal{K}_0 \ \ \text{and}\ \ |L^\circ|=\omega_{n}\Big\},\label{definition of G}\\
\widehat{\mathcal{G}}_{p,\varphi}^{orlicz}(K)=\inf \Big\{\widehat{C}_{p,\varphi}(K,L):L\in\mathcal{K}_0 \ \ \text{and}\ \ |L^\circ|=\omega_{n}\Big\}\label{definition of G hat}.\end{eqnarray}

(i) Note that $\mathcal{G}_{p,\varphi}^{orlicz}(K)\leq C_{p,\varphi}(K,B^n_2)<\infty$, due to Definition \ref{nonhomogeneous Orlicz mixed $p$-capacity}. Let $\{M_i\}_{i=1}^\infty \subseteq\mathcal{K}_0$ be the optimal sequence such that
$$C_{p,\varphi}(K,M_i)\rightarrow \mathcal{G}_{p,\varphi}^{orlicz}(K)\ \ \text{and}\ \ |M_i^\circ|=\omega_{n}\ \ \text{for any}\ i\geq1.$$
By Proposition \ref{proposition3.3}, one gets that ${\left\{M_i\right\}}_{i=1}^{\infty}$ is uniformly bounded. By Lemma \ref{blaschke-selection}, one can find a subsequence $\{M_{i_k}\}_{k=1}^\infty$ of $\{M_i\}_{i=1}^\infty$ and $M\in \mathcal{K}_0$ such that
$M_{i_k}\rightarrow M $ as $ k \rightarrow \infty$ and $|M^\circ|=\omega_{n}$.
Thus
$$
\mathcal{G}_{p,\varphi}^{orlicz}(K)=\lim_{i\rightarrow \infty}C_{p,\varphi}(K,M_i)=\lim_{k\rightarrow \infty}C_{p,\varphi}(K,M_{i_k})=C_{p,\varphi}(K,M).
$$
The last identity is due to Proposition \ref{Proposition3.1}. So $M$ is a solution to problem (\ref{4.5}).

(ii) Following along the same lines, one gets a convex body $\widehat{M}\in\mathcal{K}_0$ such that $|\widehat{M}^\circ|=\omega_{n}$ and
$$\widehat{C}_{p,\varphi}(K,\widehat{M})=\widehat{\mathcal{G}}_{p,\varphi}^{orlicz}(K)=\inf \Big\{\widehat{C}_{p,\varphi}(K,L):L\in\mathcal{K}_0 \  \text{and}\ |L^\circ|=\omega_{n}\Big\}.$$

Now we prove the uniqueness of $M.$ Let $M_1$ and $M_2$ be two convex bodies such that $|M_1^\circ|=|M_2^\circ|=\omega_{n}$ and $\mathcal{G}_{p,\varphi}^{orlicz}(K)
=C_{p,\varphi}(K,M_1)=C_{p,\varphi}(K,M_2).$ Let $M_{0}=\displaystyle{\frac{M_1+M_2}{2}}$  
and $\text{vrad}(M^\circ_{0})\leq 1$ with equality if and only if $M_1=M_2$. The fact that $\varphi$ is convex and strictly increasing implies
\begin{eqnarray*} \mathcal{G}_{p,\varphi}^{orlicz}(K)&\leq& C_{p,\varphi}(K,\text{vrad}(M^\circ_{0})\cdot M_{0})\\&=&\frac{p-1}{n-p}\int_{S^{n-1}}\varphi\left( \frac{\text{vrad}(M^\circ_{0})\cdot h_{ M_{0}}(u)}{h_K(u)} \right)h_K(u)\,d \mu_p(K,u)\\&\leq&\frac{p-1}{n-p}\int_{S^{n-1}}\varphi\left( \frac{h_{ M_{0}}(u)}{h_K(u)} \right)h_K(u)\,d \mu_p(K,u)\\
&\leq&\frac{p-1}{n-p}\int_{S^{n-1}}\bigg[\frac{1}{2}\varphi\left( \frac{h_{ M_{1}}(u)}{h_K(u)} \right)h_K(u)+\frac{1}{2}\varphi\left( \frac{h_{ M_{2}}(u)}{h_K(u)} \right)h_K(u)\bigg]\,d \mu_p(K,u)\\&= &\frac{C_{p,\varphi}(K,M_1)+C_{p,\varphi}(K,M_2)}{2}\\
&=&\mathcal{G}_{p,\varphi}^{orlicz}(K).
\end{eqnarray*}
This implies $\text{vrad}(M^\circ_{0})=1$ and hence $M_1=M_2$.

For the uniqueness of $\widehat{M}$, let $\widehat{M}_1$ and $\widehat{M}_2$ be two convex bodies such that $|\widehat{M}_1^\circ|=|\widehat{M}_2^\circ|=\omega_{n}$ and $\widehat{\mathcal{G}}_{p,\varphi}^{orlicz}(K)
=\widehat{C}_{p,\varphi}(K,\widehat{M}_1)=\widehat{C}_{p,\varphi}(K,\widehat{M}_2)$.
Let $\widehat{M}_0= \frac{\widehat{M}_1+\widehat{M}_2}{2}$ and $\text{vrad}(\widehat{M}_0^\circ)\leq 1$ with equality if and only if $\widehat{M}_1=\widehat{M}_2$. By  the convexity of $\varphi$ and the fact that $\widehat{\mathcal{G}}_{p,\varphi}^{orlicz}(K)
=\widehat{C}_{p,\varphi}(K,\widehat{M}_1)=\widehat{C}_{p,\varphi}(K,\widehat{M}_2)$, one has
\begin{eqnarray*}
1&=&\frac{1}{2}\int_{S^{n-1}}\varphi \bigg(\frac{C_p(K)\cdot h_{\widehat{M}_1}(u)}{\widehat{\mathcal{G}}_{p,\varphi}^{orlicz}(K) \cdot h_K(u)}\bigg)\,d \mu^*_p(K,u)+\frac{1}{2}\int_{S^{n-1}}\varphi \bigg(\frac{C_p(K)\cdot h_{\widehat{M}_2}(u)}{\widehat{\mathcal{G}}_{p,\varphi}^{orlicz}(K) \cdot h_K(u)}\bigg)\,d \mu^*_p(K,u)\\&\geq & \int_{S^{n-1}}\varphi \bigg(\frac{C_p(K)\cdot \big (h_{\widehat{M}_1}(u)+h_{\widehat{M}_2}(u)\big)}{2\cdot \widehat{\mathcal{G}}_{p,\varphi}^{orlicz}(K) \cdot h_K(u)}\bigg)\,d \mu^*_p(K,u)\\
&=&\int_{S^{n-1}}\varphi \left(\frac{C_p(K)\cdot h_{\widehat{M}_0}(u)}{\widehat{\mathcal{G}}_{p,\varphi}^{orlicz}(K) \cdot h_K(u)} \right)\,d \mu^*_p(K,u).
\end{eqnarray*}
By Definition \ref{definition-homogeneous} and monotonicity of $\varphi$, one obtains $\widehat{C}_{p,\varphi}(K,\widehat{M}_0)\leq \widehat{\mathcal{G}}_{p,\varphi}^{orlicz}(K)$. Combining this with (\ref{definition of G hat}) and Corollary \ref{corollary3.1}, one has
\begin{eqnarray*}
\widehat{\mathcal{G}}_{p,\varphi}^{orlicz}(K) \leq  \widehat{C}_{p,\varphi}(K,\text{vrad}(\widehat{M}_0^\circ)\cdot \widehat{M}_0) =  \text{vrad}(\widehat{M}_0^\circ)\cdot\widehat{C}_{p,\varphi}(K,\widehat{M}_0) \leq  \widehat{C}_{p,\varphi}(K,\widehat{M}_0) \leq   \widehat{\mathcal{G}}_{p,\varphi}^{orlicz}(K).
\end{eqnarray*}
This yields $\text{vrad}(\widehat{M}_0^\circ)=1$ and hence $\widehat{M}_1=\widehat{M}_2$.
\end{proof}

Theorem \ref{Theorem existence of petty bodies} motivates the following definition of the $p$-capacitary Orlicz-Petty bodies.
\begin{definition}
Let $K \in \mathcal{K}_0$ and $\varphi \in \mathcal{I}.$ Define the set $\mathcal{T}_{p,\varphi}(K)$ to be the collection of all convex bodies $M$ such that $|M^\circ|=\omega_{n}$ and
$$ C_{p,\varphi}(K,M)=\inf \Big\{C_{p,\varphi}(K,L):L\in\mathcal{K}_0 \  \text{and}\ |L^\circ|=\omega_{n}\Big\}.$$
Similarly, let the set $\widehat{\mathcal{T}}_{p,\varphi}(K)$ be the collection of all convex bodies $\widehat{M}$ such that $|\widehat{M}^\circ|=\omega_{n}$ and
$$ \widehat{C}_{p,\varphi}(K,\widehat{M})=\inf \Big\{\widehat{C}_{p,\varphi}(K,L):L\in\mathcal{K}_0 \  \text{and}\ |L^\circ|=\omega_{n}\Big\}.$$
\end{definition}
It follows from Theorem \ref{Theorem existence of petty bodies} that both $\mathcal{T}_{p,\varphi}(K)$ and $\widehat{\mathcal{T}}_{p,\varphi}(K)$ are nonempty if $\varphi\in\mathcal{I}$. When $\varphi\in\mathcal{I}$ is convex, $\mathcal{T}_{p,\varphi}(K)$ and $\widehat{\mathcal{T}}_{p,\varphi}(K)$, respectively,  contain only one element. Consequently, $\mathcal{T}_{p,\varphi}:\mathcal{K}_0\rightarrow\mathcal{K}_0$ and $\widehat{\mathcal{T}}_{p,\varphi}:\mathcal{K}_0\rightarrow\mathcal{K}_0$ define two operators on $\mathcal{K}_0$. The following theorem deals with the continuity of $\mathcal{T}_{p,\varphi}(\cdot),$ $\widehat{\mathcal{T}}_{p,\varphi}(\cdot),$ $\mathcal{G}_{p,\varphi}^{orlicz}(\cdot)$ and $\widehat{\mathcal{G}}_{p,\varphi}^{orlicz}(\cdot).$
\begin{theorem}\label{Theorem 4.2}
Let $\varphi \in \mathcal{I}$ and $\{K_i\}_{i=1}^\infty \subseteq\mathcal{K}_0$ be a sequence converging to $K\in \mathcal{K}_0.$ Then \\
(i) $\mathcal{G}_{p,\varphi}^{orlicz}(K_i)\rightarrow\mathcal{G}_{p,\varphi}^{orlicz}(K)$ and $\widehat{\mathcal{G}}_{p,\varphi}^{orlicz}(K_i)\rightarrow\widehat{\mathcal{G}}_{p,\varphi}^{orlicz}(K)$ as $i \rightarrow \infty$;\\
(ii) if, in addition, $\varphi\in\mathcal{I}$ is convex,  $\mathcal{T}_{p,\varphi}(K_i)\rightarrow\mathcal{T}_{p,\varphi}(K)$ and $\widehat{\mathcal{T}}_{p,\varphi}(K_i)\rightarrow \widehat{\mathcal{T}}_{p,\varphi}(K)$ as $i \rightarrow \infty$.\end{theorem}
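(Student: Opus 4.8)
The plan is to prove part (i) first, then deduce part (ii) by a subsequence argument that parallels the proof of Theorem \ref{Theorem 3.2}. For part (i), fix $M\in\mathcal{T}_{p,\varphi}(K)$ and $M_i\in\mathcal{T}_{p,\varphi}(K_i)$, so that $|M^\circ|=|M_i^\circ|=\omega_n$, $\mathcal{G}_{p,\varphi}^{orlicz}(K)=C_{p,\varphi}(K,M)$ and $\mathcal{G}_{p,\varphi}^{orlicz}(K_i)=C_{p,\varphi}(K_i,M_i)$. First I would establish the ``easy'' inequality $\limsup_{i\to\infty}\mathcal{G}_{p,\varphi}^{orlicz}(K_i)\le\mathcal{G}_{p,\varphi}^{orlicz}(K)$: since $M$ is admissible for every $K_i$, we have $\mathcal{G}_{p,\varphi}^{orlicz}(K_i)\le C_{p,\varphi}(K_i,M)$, and Proposition \ref{Proposition3.1} gives $C_{p,\varphi}(K_i,M)\to C_{p,\varphi}(K,M)=\mathcal{G}_{p,\varphi}^{orlicz}(K)$. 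Next, to bound $\liminf$ from below, pick a subsequence $\{K_{i_k}\}$ along which $\mathcal{G}_{p,\varphi}^{orlicz}(K_{i_k})\to\liminf_{i}\mathcal{G}_{p,\varphi}^{orlicz}(K_i)$. Then $\{C_{p,\varphi}(K_{i_k},M_{i_k})\}_k$ is bounded, so Proposition \ref{proposition3.3} shows $\{M_{i_k}\}_k$ is uniformly bounded; since $|M_{i_k}^\circ|=\omega_n$, Lemma \ref{blaschke-selection} yields a further subsequence $M_{i_{k_j}}\to M'\in\mathcal{K}_0$ with $|(M')^\circ|=\omega_n$. By Proposition \ref{Proposition3.1} (continuity of $C_{p,\varphi}(\cdot,\cdot)$ in both arguments, using also $K_{i_{k_j}}\to K$),
$$\liminf_{i\to\infty}\mathcal{G}_{p,\varphi}^{orlicz}(K_i)=\lim_{j\to\infty}C_{p,\varphi}(K_{i_{k_j}},M_{i_{k_j}})=C_{p,\varphi}(K,M')\ge\mathcal{G}_{p,\varphi}^{orlicz}(K),$$
the last inequality because $M'$ is admissible in the definition (\ref{definition of G}). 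Combining the two bounds gives $\mathcal{G}_{p,\varphi}^{orlicz}(K_i)\to\mathcal{G}_{p,\varphi}^{orlicz}(K)$. The argument for $\widehat{\mathcal{G}}_{p,\varphi}^{orlicz}$ is identical, using the $\widehat{C}_{p,\varphi}$-part of Propositions \ref{Proposition3.1} and \ref{proposition3.3}.

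For part (ii), assume $\varphi$ is convex, so by Theorem \ref{Theorem existence of petty bodies} the sets $\mathcal{T}_{p,\varphi}(K)$, $\mathcal{T}_{p,\varphi}(K_i)$ are singletons, and I write $M=\mathcal{T}_{p,\varphi}(K)$, $M_i=\mathcal{T}_{p,\varphi}(K_i)$. I would show every subsequence of $\{M_i\}$ has a further subsequence converging to $M$, which forces $M_i\to M$. Given any subsequence $\{M_{i_k}\}$: part (i) tells us $\mathcal{G}_{p,\varphi}^{orlicz}(K_{i_k})\to\mathcal{G}_{p,\varphi}^{orlicz}(K)<\infty$, hence $\{C_{p,\varphi}(K_{i_k},M_{i_k})\}_k$ is bounded, so Proposition \ref{proposition3.3} gives uniform boundedness of $\{M_{i_k}\}_k$ and Lemma \ref{blaschke-selection} yields $M_{i_{k_j}}\to M'\in\mathcal{K}_0$ with $|(M')^\circ|=\omega_n$. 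Proposition \ref{Proposition3.1} then gives
$$C_{p,\varphi}(K,M')=\lim_{j\to\infty}C_{p,\varphi}(K_{i_{k_j}},M_{i_{k_j}})=\lim_{j\to\infty}\mathcal{G}_{p,\varphi}^{orlicz}(K_{i_{k_j}})=\mathcal{G}_{p,\varphi}^{orlicz}(K),$$
so $M'\in\mathcal{T}_{p,\varphi}(K)$; by uniqueness $M'=M$. Thus $M_i\to M$, i.e. $\mathcal{T}_{p,\varphi}(K_i)\to\mathcal{T}_{p,\varphi}(K)$. The same reasoning, with $\widehat{C}_{p,\varphi}$ and the uniqueness of $\widehat{M}$ from Theorem \ref{Theorem existence of petty bodies}, shows $\widehat{\mathcal{T}}_{p,\varphi}(K_i)\to\widehat{\mathcal{T}}_{p,\varphi}(K)$.

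The main obstacle is not conceptual—the scheme is exactly the subsequence-extraction argument already used for Theorem \ref{Theorem 3.2}—but rather making sure the compactness input is legitimate at each step: one must invoke Proposition \ref{proposition3.3} with the \emph{moving} base bodies $K_{i_k}$ (not a fixed $K$), which is why that proposition was stated with a convergent sequence $K_i\to K$ in its hypotheses, and one must check that the two-argument continuity in Proposition \ref{Proposition3.1} indeed applies to the pairs $(K_{i_{k_j}},M_{i_{k_j}})$ with both arguments varying. Once these are in place, everything else is a routine $\limsup/\liminf$ squeeze together with the uniqueness clauses of Theorem \ref{Theorem existence of petty bodies}.
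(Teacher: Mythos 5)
Your proposal is correct and follows essentially the same route as the paper's proof: the $\limsup$ bound via admissibility of the fixed optimizer $M$ and Proposition \ref{Proposition3.1}, the $\liminf$ bound via Proposition \ref{proposition3.3}, Lemma \ref{blaschke-selection} and two-argument continuity, and for (ii) the standard ``every subsequence has a further subsequence converging to $M$'' argument combined with the uniqueness clause of Theorem \ref{Theorem existence of petty bodies}. The only cosmetic difference is that the paper deduces uniform boundedness of the whole sequence $\{M_i\}$ before passing to the $\liminf$-achieving subsequence, whereas you extract the subsequence first; both are valid.
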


\begin{proof}  (i) First of all, we prove $\mathcal{G}_{p,\varphi}^{orlicz}(K_i)\rightarrow\mathcal{G}_{p,\varphi}^{orlicz}(K)$ as $i \rightarrow \infty.$ Let $M\in\mathcal{T}_{p,\varphi}(K)$ and $M_i \in \mathcal{T}_{p,\varphi}(K_i)$ for each $i\geq 1$. By Proposition \ref{Proposition3.1} and (\ref{definition of G}), one has
 \begin{eqnarray}  \nonumber \mathcal{G}_{p,\varphi}^{orlicz}(K)&=&C_{p,\varphi}(K,M)\\  \nonumber&=&\lim_{i\rightarrow \infty}C_{p,\varphi}(K_i,M)\\  \nonumber
 &=&\limsup_{i\rightarrow \infty}C_{p,\varphi}(K_i,M)\\
 &\geq& \limsup_{i\rightarrow \infty}\mathcal{G}_{p,\varphi}^{orlicz}(K_i). \label{continuous-1}
 \end{eqnarray}
This implies that $\{\mathcal{G}_{p,\varphi}^{orlicz}(K_i)\}_{i=1}^\infty$ is bounded. It follows from Proposition \ref{proposition3.3} and $\mathcal{G}_{p,\varphi}^{orlicz}(K_i)=C_{p,\varphi}(K_i,M_i)$ for each $i\geq1$ that $\{M_{i}\}_{i=1}^\infty $ is uniformly bounded. Let $\{K_{i_k}\}_{k=1}^\infty \subseteq\{K_i\}_{i=1}^\infty$ be a subsequence such that $$\lim_{k \rightarrow \infty}\mathcal{G}_{p,\varphi}^{orlicz}(K_{i_k}) = \liminf_{i\rightarrow \infty}\mathcal{G}_{p,\varphi}^{orlicz}(K_i).$$
By the boundedness of $\{M_{i_k}\}_{k=1}^\infty $ and Lemma \ref{blaschke-selection}, there exist a subsequence $\{M_{i_{k_j}}\}_{j=1}^\infty$ of $ \{M_{i_k}\}_{k=1}^\infty$ and $M^\prime\in \mathcal{K}_0$ such that $M_{i_{k_j}}\rightarrow M^\prime$ as $j\rightarrow \infty$ and $|{(M^\prime)}^\circ|=\omega_{n}$. Thus, Proposition \ref{Proposition3.1} yields
 \begin{eqnarray} \label{continuous-2}
 \nonumber \liminf_{i\rightarrow \infty}\mathcal{G}_{p,\varphi}^{orlicz}(K_i)&=&\lim_{j \rightarrow \infty}\mathcal{G}_{p,\varphi}^{orlicz}(K_{i_{k_j}})\\ \nonumber &=&\lim_{j \rightarrow \infty}C_{p,\varphi}(K_{i_{k_j}},M_{i_{k_j}})\\ \nonumber &=&C_{p,\varphi}(K,M^\prime)\\&\geq& \mathcal{G}_{p,\varphi}^{orlicz}(K).
 \end{eqnarray}
From (\ref{continuous-1}) and (\ref{continuous-2}), one concludes that
\begin{equation}\label{continuity of G}
\mathcal{G}_{p,\varphi}^{orlicz}(K)=\lim_{i\rightarrow \infty}\mathcal{G}_{p,\varphi}^{orlicz}(K_i).
\end{equation}
The assertion $\widehat{\mathcal{G}}_{p,\varphi}^{orlicz}(K_i)\rightarrow\widehat{\mathcal{G}}_{p,\varphi}^{orlicz}(K)$ can be proved in a similar manner.

(ii) Next we prove $\mathcal{T}_{p,\varphi}(K_i)\rightarrow\mathcal{T}_{p,\varphi}(K)$ when $\varphi\in\mathcal{I}$ is convex. In this case, by Theorem \ref{Theorem existence of petty bodies}, $\mathcal{T}_{p,\varphi}(K)$ and $\mathcal{T}_{p,\varphi}(K_i)$ contain only one element which will be denoted by $M$ and $M_i$ for each $i\geq 1$.
Let $\{M_{i_k}\}_{k=1}^\infty$ be any subsequence of $\{M_i\}_{i=1}^\infty$.
By the convergence of $K_{i_k} \rightarrow K\in \mathcal{K}_0$ and (\ref{continuity of G}), one has
\be\label{continuous-D-1'}
\mathcal{G}_{p,\varphi}^{orlicz}(K)=\lim_{k\rightarrow\infty}\mathcal{G}_{p,\varphi}^{orlicz}(K_{i_k})
=\lim_{k\rightarrow\infty}C_{p,\varphi}(K_{i_k},M_{i_k}).
\ee
Consequently, $\{C_{p,\varphi}(K_{i_k},M_{i_k})\}_{k=1}^\infty$ is uniformly bounded and then $\{M_{i_k}\}_{k=1}^\infty$ is bounded, due to Proposition \ref{proposition3.3}.
From Lemma \ref{blaschke-selection}, there exist a subsequence $\{M_{i_{k_j}}\}_{j=1}^\infty$ of $\{M_{i_k}\}_{k=1}^\infty$ and a convex body $M^\prime\in \mathcal{K}_0$ such that $M_{i_{k_j}}\rightarrow M^\prime$ and $|{(M^\prime)}^\circ|=\omega_{n}$. By Proposition \ref{Proposition3.1} and (\ref{continuous-D-1'}), one has
$$
\mathcal{G}_{p,\varphi}^{orlicz}(K)
=\lim_{j\rightarrow\infty}\mathcal{G}_{p,\varphi}^{orlicz}(K_{i_{k_j}})=\lim_{j\rightarrow\infty}C_{p,\varphi}(K_{i_{k_j}},M_{i_{k_j}})
=C_{p,\varphi}(K,M^\prime).
$$
Therefore $M=M^\prime$, due to the uniqueness in Theorem \ref{Theorem existence of petty bodies}  if $\varphi\in\mathcal{I}$ is convex. In other words, we have proved that every subsequence of $\{M_i\}_{i=1}^\infty$ has a convergent subsequence with limit of $M.$ Thus $M_i \rightarrow M$ as $i \rightarrow \infty$ with respect to the Hausdorff metric.

Along the same lines, one can prove $\widehat{\mathcal{T}}_{p,\varphi}(K_i)\rightarrow \widehat{\mathcal{T}}_{p,\varphi}(K)$ as $i \rightarrow \infty$ under the condition that $\varphi\in\mathcal{I}$ is convex.
\end{proof}

Again, if $K$ is a polytope, then its $p$-capacitary Orlicz-Petty bodies must be polytopes as well. That is the next proposition.
\begin{proposition} \label{proposition7.1}
 If $K\in \mathcal{K}_0$ is a polytope and $\varphi\in\mathcal{I}$, then $\mathcal{T}_{p,\varphi}(K)$ and $\widehat{\mathcal{T}}_{p,\varphi}(K)$ only contain polytopes with faces parallel to those of $K$.
\end{proposition}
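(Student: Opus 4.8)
The plan is to transcribe the optimization argument used for Proposition~\ref{proposition3.1}, the single new ingredient being that, when $K$ is a polytope, the capacitary measures $\mu_p(K,\cdot)$ and $\mu^*_p(K,\cdot)$ are discrete. Let $u_1,\dots,u_m\in S^{n-1}$ be the unit normals of the facets $F_1,\dots,F_m$ of $K$. Since $\partial K$ is the union of the $F_i$ with a set of $\mathcal{H}^{n-1}$-measure zero (the lower-dimensional faces), and since the inverse Gauss map of $K$ sends the relative interior of $F_i$ to $u_i$, the definition (\ref{measure-38}) gives
$$\mu_p(K,\cdot)=\sum_{i=1}^{m}c_i\,\delta_{u_i},\qquad c_i=\int_{F_i}|\nabla U_K|^p\,d\mathcal{H}^{n-1}\ge 0,\qquad \sum_{i=1}^{m}c_i=\mu_p(K,S^{n-1})>0,$$
and hence, by (\ref{measure-37}), $\mu^*_p(K,\cdot)$ is also supported on $\{u_1,\dots,u_m\}$. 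This is the one step in the argument that is not purely mechanical; everything after it repeats material already present in the paper.

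Given $M\in\mathcal{T}_{p,\varphi}(K)$, I would form the polytope
$$P=\bigcap_{i=1}^{m}\big\{x\in\mathbb{R}^n:\langle x,u_i\rangle\le h_M(u_i)\big\}.$$
Since the facet normals of the convex body $K$ positively span $\mathbb{R}^n$, $P$ is a bounded polytope, and $o\in\text{int}\,M\subseteq\text{int}\,P$, so $P\in\mathcal{K}_0$. From $M\subseteq P\subseteq\{x:\langle x,u_i\rangle\le h_M(u_i)\}$ one reads off $h_P(u_i)=h_M(u_i)$ for every $i$, while $M\subseteq P$ gives $P^\circ\subseteq M^\circ$, so $\text{vrad}(P^\circ)\le\text{vrad}(M^\circ)=1$. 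Because $\mu_p(K,\cdot)$ lives on $\{u_1,\dots,u_m\}$ and $h_P(u_i)=h_M(u_i)$ there, formula (\ref{1.18}) yields $C_{p,\varphi}(K,P)=C_{p,\varphi}(K,M)$. On the other hand $(\text{vrad}(P^\circ)P)^\circ$ has volume $\omega_n$, so $\text{vrad}(P^\circ)P$ is admissible in (\ref{definition of G}), and $\text{vrad}(P^\circ)P\subseteq P$ together with $\varphi$ increasing gives
$$C_{p,\varphi}(K,M)=\mathcal{G}_{p,\varphi}^{orlicz}(K)\le C_{p,\varphi}\big(K,\text{vrad}(P^\circ)P\big)\le C_{p,\varphi}(K,P)=C_{p,\varphi}(K,M).$$
Hence $C_{p,\varphi}(K,\text{vrad}(P^\circ)P)=C_{p,\varphi}(K,P)$; since $\varphi$ is strictly increasing, $h_P(u_i)>0$, and $\sum_i c_i>0$, this is possible only if $\text{vrad}(P^\circ)=1$. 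Then $|P^\circ|=\omega_n=|M^\circ|$ with $P^\circ\subseteq M^\circ$ forces $P^\circ=M^\circ$, so $M=P$ is a polytope whose facet normals lie among $u_1,\dots,u_m$, i.e.\ with faces parallel to those of $K$.

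The homogeneous case is identical with $\widehat{C}_{p,\varphi}$ replacing $C_{p,\varphi}$. Given $\widehat{M}\in\widehat{\mathcal{T}}_{p,\varphi}(K)$, I would build $P$ from $h_{\widehat{M}}$ exactly as above. Since $\mu^*_p(K,\cdot)$ is carried by $\{u_1,\dots,u_m\}$ and $h_P(u_i)=h_{\widehat{M}}(u_i)$, the equation (\ref{1.19}) that determines $\widehat{C}_{p,\varphi}(K,P)$ is word-for-word the one determining $\widehat{C}_{p,\varphi}(K,\widehat{M})$, so $\widehat{C}_{p,\varphi}(K,P)=\widehat{C}_{p,\varphi}(K,\widehat{M})=\widehat{\mathcal{G}}_{p,\varphi}^{orlicz}(K)$, which is positive by Theorem~\ref{Theorem existence of petty bodies}. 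Invoking the homogeneity of $\widehat{C}_{p,\varphi}$ from Corollary~\ref{corollary3.1} and the admissibility of $\text{vrad}(P^\circ)P$ in (\ref{definition of G hat}),
$$\widehat{\mathcal{G}}_{p,\varphi}^{orlicz}(K)\le\widehat{C}_{p,\varphi}\big(K,\text{vrad}(P^\circ)P\big)=\text{vrad}(P^\circ)\,\widehat{C}_{p,\varphi}(K,P)=\text{vrad}(P^\circ)\,\widehat{\mathcal{G}}_{p,\varphi}^{orlicz}(K)\le\widehat{\mathcal{G}}_{p,\varphi}^{orlicz}(K),$$
so $\text{vrad}(P^\circ)=1$ and, as before, $\widehat{M}=P$. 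Thus the only genuine obstacle is the discreteness of $\mu_p(K,\cdot)$ and $\mu^*_p(K,\cdot)$ claimed in the first paragraph; granting it, the proposition follows by the same optimization scheme as Proposition~\ref{proposition3.1}, now applied to the functionals $C_{p,\varphi}(K,\cdot)$ and $\widehat{C}_{p,\varphi}(K,\cdot)$.
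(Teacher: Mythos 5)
Your proof is correct and follows essentially the same route as the paper: discreteness of $\mu_p(K,\cdot)$ for a polytope $K$, the truncated polytope $P$ from (\ref{cir-12-22}) with $h_P(u_i)=h_M(u_i)$, equality of the mixed $p$-capacities of $M$ and $P$, and the rescaling/vrad comparison forcing $\text{vrad}(P^\circ)=1$ and $M=P$. The only cosmetic difference is that you derive the discreteness of $\mu_p(K,\cdot)$ directly from (\ref{measure-38}) where the paper cites \cite{HYZ2017I}.
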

\begin{proof} As $K$ is a polytope, its surface area measure $S(K,\cdot)$ must be concentrated on a finite subset $\{u_1,u_2,\cdots, u_m\}\subseteq S^{n-1}.$ This, together with (\ref{measure-38}), implies that the $p$-capacitary measure $\mu_p(K,\cdot)$ is also concentrated on $\{u_1,u_2,\cdots, u_m\}$ (see e.g. \cite{HYZ2017I}). Let $M\in\widehat{\mathcal{T}}_{p,\varphi}(K)$ and $P$ be a polytope given by (\ref{cir-12-22}). Then, $h_P(u_i)=h_M(u_i)$ $(1\leq i \leq m)$.
Thus, one has
\begin{eqnarray*}
1&=&\int_{S^{n-1}}\varphi \bigg(\frac{C_p(K)\cdot h_{P}(u)}{\widehat{C}_{p,\varphi}(K,P)\cdot h_{K}(u)}\bigg)\,d \mu^*_p(K,u)\\
&=&\frac{p-1}{n-p}\cdot\frac{1}{C_p(K)}\sum^m_{i=1}\varphi\bigg(\frac{C_p(K)\cdot h_{P}(u_i)}{\widehat{C}_{p,\varphi}(K,P)\cdot h_{K}(u_i)}\bigg)\cdot h_K(u_i)\cdot\mu_p(K,\{u_i\})\\
&=&\frac{p-1}{n-p}\cdot\frac{1}{C_p(K)}\sum^m_{i=1}\varphi \bigg(\frac{C_p(K)\cdot h_{M}(u_i)}{\widehat{C}_{p,\varphi}(K,P)\cdot h_{K}(u_i)}\bigg)\cdot h_K(u_i)\cdot\mu_p(K,\{u_i\})\\
&=&\int_{S^{n-1}}\varphi \bigg(\frac{C_p(K)\cdot h_{M}(u)}{\widehat{C}_{p,\varphi}(K,P)\cdot h_{K}(u)}\bigg)\,d \mu^*_p(K,u).
\end{eqnarray*}
This yields $\widehat{C}_{p,\varphi}(K,P)=\widehat{C}_{p,\varphi}(K,M)$. On the other hand, by (\ref{definition of G hat}) and Corollary \ref{corollary3.1}, one gets
$$
\widehat{C}_{p,\varphi}(K,P)=\widehat{C}_{p,\varphi}(K,M)=\widehat{\mathcal{G}}_{p,\varphi}^{orlicz}(K)\leq \widehat{C}_{p,\varphi}(K,\text{vrad}(P^\circ)P)=\text{vrad}(P^\circ)\cdot\widehat{C}_{p,\varphi}(K,P).
$$
This implies $\text{vrad}(P^\circ)\geq1.$  Due to $P^\circ \subseteq M^\circ$, one has $\text{vrad}(P^\circ)\leq \text{vrad}(M^\circ)=1$. Hence $|P^\circ|=|M^\circ|$ and then $M=P$.

Employing the same argument, one can prove that each $M\in \mathcal{T}_{p,\varphi}(K)$ is a polytope with faces parallel to those of $K$.
\end{proof}

The following proposition can be proved by the same techniques as the proofs of Proposition \ref{denial of other possibilities}. From this proposition, one sees that problems $(\ref{4.5})$ and $(\ref{4.6})$ may not be solvable in general except the case stated in Theorem \ref{Theorem existence of petty bodies}.
\begin{proposition}\label{proposition 4.4}Let $K\in \mathcal{K}_0$ be a polytope and $S(K,\cdot)$ be its surface area measure on $S^{n-1}$ which is concentrated on a finite subset $\{u_1,u_2,\cdots, u_m\}\subseteq S^{n-1}.$\\
(i) If $\varphi\in\mathcal{I},$ then \begin{eqnarray*} \ \ \ \ \sup\Big\{C_{p,\varphi}(K,L):L\in\mathcal{K}_0 \  \text{and}\ |L^\circ|=\omega_{n}\Big\}= \sup \Big\{\widehat{C}_{p,\varphi}(K,L):L\in\mathcal{K}_0 \  \text{and}\ |L^\circ|=\omega_{n}\Big\}=\infty.\end{eqnarray*} 
  (ii) If $\varphi\in\mathcal{D}$, then 
\begin{eqnarray*} &&\inf \Big\{\widehat{C}_{p,\varphi}(K,L):L\in\mathcal{K}_0 \  \text{and}\ |L^\circ|=\omega_{n}\Big\}=0,\\
&&\sup\Big\{C_{p,\varphi}(K,L):L\in\mathcal{K}_0 \  \text{and}\ |L^\circ|=\omega_{n}\Big\}=\infty.\end{eqnarray*} 
(iii) If $\varphi\in\mathcal{D}$ and the jth ($1\leq j\leq n$) coordinates of $u_1,u_2,\cdots, u_m$ are nonzero, then 
\begin{eqnarray*} && \inf \Big\{C_{p,\varphi}(K,L):L\in\mathcal{K}_0 \  \text{and}\ |L^\circ|=\omega_{n}\Big\}=0,\\  &&\sup\Big\{\widehat{C}_{p,\varphi}(K,L):L\in\mathcal{K}_0 \  \text{and}\ |L^\circ|=\omega_{n}\Big\}=\infty.\end{eqnarray*} \end{proposition}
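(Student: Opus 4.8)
The plan is to follow the pattern of the proof of Proposition~\ref{denial of other possibilities}: for each assertion we exhibit a one‑parameter family of ellipsoids $L_\epsilon\in\mathcal{K}_0$ with $|L_\epsilon^\circ|=\omega_n$ along which the relevant Orlicz mixed $p$‑capacity is driven to $0$ or $\infty$, so that no optimizer can exist. The construction rests on two elementary facts: for an invertible linear map $A$ one has $h_{AB^n_2}(u)=|A^tu|$ and $(AB^n_2)^\circ=A^{-t}B^n_2$, so any ellipsoid $AB^n_2$ with $|\det A|=1$ automatically satisfies the polarity normalization $|(AB^n_2)^\circ|=\omega_n$. The input from $K$ being a polytope is that $S(K,\cdot)$, and hence $\mu_p(K,\cdot)$ and the probability measure $\mu^*_p(K,\cdot)$, are concentrated on the finite normal set $\{u_1,\dots,u_m\}$ with every $\mu_p(K,\{u_i\})>0$; consequently $C_{p,\varphi}(K,L)=\frac{p-1}{n-p}\sum_{i=1}^m\varphi\big(h_L(u_i)/h_K(u_i)\big)h_K(u_i)\mu_p(K,\{u_i\})$, while $\widehat{C}_{p,\varphi}(K,L)$ is the unique $\eta>0$ solving $\sum_{i=1}^m\varphi\big(C_p(K)h_L(u_i)/(\eta h_K(u_i))\big)\mu^*_p(K,\{u_i\})=1$. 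Fixing $r_0,R_0>0$ with $r_0\le h_K(u_i)\le R_0$, both functionals become monotone functions of the finitely many support values $h_L(u_i)$, so it suffices to control those.

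For part (i) ($\varphi\in\mathcal I$), pick by Gram--Schmidt an orthogonal $T$ with first column $u_1$ and set $L_\epsilon=T\,\mathrm{diag}(\epsilon^{-1},\epsilon,1,\dots,1)\,T^tB^n_2$, so $h_{L_\epsilon}(u_1)=\epsilon^{-1}\to\infty$. Keeping only the $i=1$ term, $C_{p,\varphi}(K,L_\epsilon)\ge\frac{p-1}{n-p}\,\varphi\big(\epsilon^{-1}/h_K(u_1)\big)h_K(u_1)\mu_p(K,\{u_1\})\to\infty$. For $\widehat{C}_{p,\varphi}$ one argues by contradiction: if $\widehat{C}_{p,\varphi}(K,L_\epsilon)\le B$ along a sequence $\epsilon\to0$, the $i=1$ summand of its defining identity is at least $\varphi\big(C_p(K)/(B\,h_K(u_1)\,\epsilon)\big)\mu^*_p(K,\{u_1\})\to\infty$, incompatible with the identity equalling $1$; hence $\sup\widehat{C}_{p,\varphi}(K,L)=\infty$. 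Part (ii) ($\varphi\in\mathcal D$) is the mirror image: take $L_\epsilon=T\,\mathrm{diag}(\epsilon,\epsilon^{-1},1,\dots,1)\,T^tB^n_2$, so $h_{L_\epsilon}(u_1)=\epsilon\to0$; since $\varphi(t)\to\infty$ as $t\to0^+$, the one‑term bound gives $C_{p,\varphi}(K,L_\epsilon)\ge\frac{p-1}{n-p}\varphi\big(\epsilon/h_K(u_1)\big)h_K(u_1)\mu_p(K,\{u_1\})\to\infty$, and if $\widehat{C}_{p,\varphi}(K,L_\epsilon)\ge\delta>0$ along a sequence the $i=1$ summand of the identity is at least $\varphi\big(C_p(K)\epsilon/(\delta h_K(u_1))\big)\mu^*_p(K,\{u_1\})\to\infty$, again impossible, so $\inf\widehat{C}_{p,\varphi}(K,L)=0$.

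For part (iii) ($\varphi\in\mathcal D$, all $u_i$ with $(u_i)_j\neq0$), set $\alpha=\min_i|(u_i)_j|>0$ and let $L_\epsilon$ be the axis‑parallel ellipsoid that scales the $e_j$‑axis by $\epsilon^{-(n-1)}$ and the other $n-1$ axes by $\epsilon$; then $|\det|=1$ so $|L_\epsilon^\circ|=\omega_n$, and $h_{L_\epsilon}(u_i)\ge\epsilon^{-(n-1)}|(u_i)_j|\ge\epsilon^{-(n-1)}\alpha\to\infty$ \emph{simultaneously} for every $i$. Since $\varphi\in\mathcal D$, each summand of $C_{p,\varphi}(K,L_\epsilon)$ tends to $0$, whence $\inf C_{p,\varphi}(K,L)=0$; and if $\widehat{C}_{p,\varphi}(K,L_\epsilon)\le B$ along a sequence, every summand of its defining identity is at most $\varphi\big(C_p(K)\epsilon^{-(n-1)}\alpha/(BR_0)\big)\to0$, forcing the sum strictly below $1$ for small $\epsilon$ --- a contradiction --- so $\sup\widehat{C}_{p,\varphi}(K,L)=\infty$.

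The only genuinely delicate point, exactly as in Proposition~\ref{denial of other possibilities}, is the bookkeeping that makes these ellipsoid families respect $|L_\epsilon^\circ|=\omega_n$ \emph{exactly} while pushing the chosen support values to $0$ or $\infty$; this is precisely why one conjugates diagonal matrices whose diagonal entries multiply to $1$ (and, in part (iii), uses a single anisotropic scaling along $e_j$). No Blaschke‑selection or compactness argument is needed, since we only have to produce explicit extremizing sequences rather than optimizers; everything else is the monotonicity and boundary behaviour of $\varphi\in\mathcal I\cup\mathcal D$ applied termwise to a finite sum.
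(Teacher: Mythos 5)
Your proposal is correct and is essentially the paper's intended argument: the paper omits the proof, stating only that it follows "by the same techniques as the proofs of Proposition \ref{denial of other possibilities}", and your write-up is exactly that --- conjugated diagonal ellipsoids of determinant one driving a single support value $h_{L_\epsilon}(u_1)$ (parts (i)--(ii)) or all support values simultaneously (part (iii)) to $0$ or $\infty$, combined with the termwise monotonicity of $\varphi$ in the finite sums defining $C_{p,\varphi}$ and in the implicit identity defining $\widehat{C}_{p,\varphi}$. The only cosmetic difference is your choice of scaling $\mathrm{diag}$ entries (e.g.\ $\epsilon^{-(n-1)},\epsilon,\dots,\epsilon$ versus the paper's $\epsilon^{-1}\cdot\mathrm{diag}(1,\dots,1,\epsilon^n)$ in Proposition \ref{denial of other possibilities}), which is immaterial.
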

 
 In fact, we can replace $|L^\circ|$ in problems $(\ref{4.5})$ and $(\ref{4.6})$ by $C_p(L^\circ)$ and consider the following optimization problems:
\begin{eqnarray}&&\sup/\inf \Big\{\widehat{C}_{p,\varphi}(K,L):L\in\mathcal{K}_0 \  \text{and}\ C_p(L^\circ)=C_p(B_2^n)\Big\}, \label{4.12}\\
&&\sup/\inf \Big\{{C}_{p,\varphi}(K,L):L\in\mathcal{K}_0 \  \text{and}\ C_p(L^\circ)=C_p(B_2^n)\Big\}\nonumber.\end{eqnarray}
 The following result can be obtained.
\begin{theorem}
\noindent Let $K \in \mathcal{K}_0$ and $\varphi \in \mathcal{I}.$\\
(i) There exists a convex body $\widehat{M}\in\mathcal{K}_0$ such that  $C_p(\widehat{M}^\circ)=C_p(B_2^n)$ and
$$\widehat{\mathcal{H}}_{p,\varphi}^{orlicz}(K)=\inf \Big\{\widehat{C}_{p,\varphi}(K,L):L\in\mathcal{K}_0 \  \text{and}\ C_p(L^\circ)=C_p(B_2^n)\Big\}=\widehat{C}_{p,\varphi}(K,\widehat{M}).$$
 Moreover, if $\{K_i\}^\infty_{i=1}\subseteq\mathcal{K}_0$ satisfies $K_i\rightarrow K,$ then $\widehat{\mathcal{H}}_{p,\varphi}^{orlicz}(K_i)\rightarrow \widehat{\mathcal{H}}_{p,\varphi}^{orlicz}(K).$
 \vskip 2mm \noindent
(ii) There exists a convex body $M\in\mathcal{K}_0$ such that  $C_p(M^\circ)=C_p(B_2^n)$ and
$$
\mathcal{H}_{p,\varphi}^{orlicz}(K)=\inf \Big\{C_{p,\varphi}(K,L): L\in\mathcal{K}_0 \  \text{and}\ C_p(L^\circ)=C_p(B_2^n)\Big\}=C_{p,\varphi}(K,M).$$
Moreover, if $\{K_i\}^\infty_{i=1}\subseteq\mathcal{K}_0$ satisfies $K_i\rightarrow K,$ then $\mathcal{H}_{p,\varphi}^{orlicz}(K_i)\rightarrow \mathcal{H}_{p,\varphi}^{orlicz}(K).$ 
\end{theorem}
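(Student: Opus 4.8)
The plan is to run the argument of Theorems~\ref{Theorem existence of petty bodies} and~\ref{Theorem 4.2} verbatim, the only change being that the normalization $|L^\circ|=\omega_n$ is replaced by $C_p(L^\circ)=C_p(B_2^n)$. The single new ingredient is a capacitary version of Lemma~\ref{blaschke-selection}: \emph{if $\{L_i\}_{i=1}^\infty\subseteq\mathcal{K}_0$ is uniformly bounded and $\{C_p(L_i^\circ)\}_{i=1}^\infty$ is bounded, then some subsequence converges to a convex body $L\in\mathcal{K}_0$; and if moreover $C_p(L_i^\circ)=C_p(B_2^n)$ for all $i$, then $C_p(L^\circ)=C_p(B_2^n)$.} I would obtain this by stringing together facts already recorded in Section~\ref{section:2}: the isocapacitary inequality~(\ref{measure-36}) applied to $L_i^\circ$ shows $\{|L_i^\circ|\}_{i=1}^\infty$ is bounded, so Lemma~\ref{blaschke-selection} yields a subsequence $L_{i_j}\to L\in\mathcal{K}_0$; convergence in $\mathcal{K}_0$ forces $L_{i_j}^\circ\to L^\circ$ (their radial functions are the reciprocals of support functions uniformly bounded away from $0$ and $\infty$, hence converge uniformly); and $C_p(\cdot)$ is continuous on $\mathcal{K}_0$, so $C_p(L_{i_j}^\circ)\to C_p(L^\circ)$, forcing $C_p(L^\circ)=C_p(B_2^n)$.

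Granting this lemma, part~(i) reads as follows. Since $B_2^n$ is self-polar it is admissible, so $\widehat{\mathcal{H}}_{p,\varphi}^{orlicz}(K)\le\widehat{C}_{p,\varphi}(K,B_2^n)<\infty$; choose $\{M_i\}_{i=1}^\infty\subseteq\mathcal{K}_0$ with $C_p(M_i^\circ)=C_p(B_2^n)$ and $\widehat{C}_{p,\varphi}(K,M_i)\to\widehat{\mathcal{H}}_{p,\varphi}^{orlicz}(K)$. As $\{\widehat{C}_{p,\varphi}(K,M_i)\}_{i=1}^\infty$ is bounded, Proposition~\ref{proposition3.3} (applied with the constant sequence $K_i\equiv K$) shows $\{M_i\}_{i=1}^\infty$ is uniformly bounded; the capacitary selection lemma then gives a subsequence $M_{i_k}\to M\in\mathcal{K}_0$ with $C_p(M^\circ)=C_p(B_2^n)$, and Proposition~\ref{Proposition3.1} gives $\widehat{C}_{p,\varphi}(K,M)=\lim_k\widehat{C}_{p,\varphi}(K,M_{i_k})=\widehat{\mathcal{H}}_{p,\varphi}^{orlicz}(K)$, so $\widehat M:=M$ works. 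Part~(ii) is word-for-word the same with $\widehat{C}_{p,\varphi}$ replaced by $C_{p,\varphi}$, using $C_{p,\varphi}(K,B_2^n)<\infty$ (Definition~\ref{nonhomogeneous Orlicz mixed $p$-capacity}) and the corresponding halves of Propositions~\ref{proposition3.3} and~\ref{Proposition3.1}.

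The continuity claims follow the pattern of Theorem~\ref{Theorem 4.2}(i). Given $K_i\to K$, pick minimizers $\widehat M$ for $K$ and $\widehat M_i$ for $K_i$. Because $\widehat M$ is admissible for every $K_i$, Proposition~\ref{Proposition3.1} gives $\widehat{\mathcal{H}}_{p,\varphi}^{orlicz}(K)=\widehat{C}_{p,\varphi}(K,\widehat M)=\lim_i\widehat{C}_{p,\varphi}(K_i,\widehat M)\ge\limsup_i\widehat{\mathcal{H}}_{p,\varphi}^{orlicz}(K_i)$, so $\{\widehat{\mathcal{H}}_{p,\varphi}^{orlicz}(K_i)\}_{i=1}^\infty$ is bounded; hence $\{\widehat{C}_{p,\varphi}(K_i,\widehat M_i)\}_{i=1}^\infty$ is bounded, and Proposition~\ref{proposition3.3} makes $\{\widehat M_i\}_{i=1}^\infty$ uniformly bounded. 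Passing to a subsequence realizing $\liminf_i\widehat{\mathcal{H}}_{p,\varphi}^{orlicz}(K_i)$ and then, via the capacitary selection lemma, to a further subsequence $\widehat M_{i_{k_j}}\to M'$ with $C_p((M')^\circ)=C_p(B_2^n)$, Proposition~\ref{Proposition3.1} yields $\liminf_i\widehat{\mathcal{H}}_{p,\varphi}^{orlicz}(K_i)=\lim_j\widehat{C}_{p,\varphi}(K_{i_{k_j}},\widehat M_{i_{k_j}})=\widehat{C}_{p,\varphi}(K,M')\ge\widehat{\mathcal{H}}_{p,\varphi}^{orlicz}(K)$; combining the two estimates gives convergence, and the argument for $\mathcal{H}_{p,\varphi}^{orlicz}$ is identical. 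The only step that needs care is the capacitary selection lemma (specifically, that the constraint $C_p(L^\circ)=C_p(B_2^n)$ survives the limit), and I expect no real difficulty there, since the relevant isocapacitary inequality and the continuity of $C_p(\cdot)$ and of polarity are all available. In contrast to Theorems~\ref{Theorem existence of petty bodies} and~\ref{Theorem 4.2}, no uniqueness or continuity of a Petty-body operator is claimed, so no Brunn--Minkowski-type inequality for the $p$-capacity of polar bodies is required.
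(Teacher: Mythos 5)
Your proposal is correct and follows essentially the same route as the paper: a minimizing sequence bounded via Proposition \ref{proposition3.3}, the isocapacitary inequality (\ref{measure-36}) to convert the capacity constraint on the polars into a volume bound so that Lemma \ref{blaschke-selection} applies, continuity of $C_p(\cdot)$ and of polarity to preserve the constraint in the limit, and then the $\limsup$/$\liminf$ sandwich of Theorem \ref{Theorem 4.2}(i) for the continuity statement. No discrepancies with the paper's argument.
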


\begin{proof} (i) Let $\{M_i\}_{i=1}^\infty \subseteq\mathcal{K}_0$ be a sequence of convex bodies such that
$$
\widehat{C}_{p,\varphi}(K, M_i)\rightarrow \widehat{\mathcal{H}}_{p,\varphi}^{orlicz}(K)\ \ \text{and}\ \  C_p({M_i}^\circ)=C_p(B_2^n)\,\,\,\text{for any}\,\, i\geq 1.
$$
As $\widehat{\mathcal{H}}_{p,\varphi}^{orlicz}(K)\leq\widehat{C}_{p,\varphi}(K,B^n_2)<\infty,$ then $\{\widehat{C}_{p,\varphi}(K,M_i)\}_{i=1}^\infty$ is bounded. Proposition \ref{proposition3.3} implies that $\{M_i\}_{i=1}^\infty$ is uniformly bounded.  Due to $C_p({M}^\circ_{i})=C_p(B_2^n)$ for any $i\geq1$, it follows from  (\ref{measure-36}) that $\{|M^\circ_{i}|\}^\infty_{i=1}$ is bounded. By Lemma \ref{blaschke-selection}, there exist a subsequence $\{M_{i_k}\}_{k=1}^\infty$ of $\{M_i\}_{i=1}^\infty$ and  $\widehat{M}\in \mathcal{K}_0$ such that
$M_{i_k}\rightarrow \widehat{M} $ as $ k \rightarrow \infty.$ By Proposition \ref{Proposition3.1}, one has $C_p(\widehat{M}^\circ)=\lim_{k\rightarrow \infty}C_p({M}^\circ_{i_k})=C_p(B_2^n)$ and
$$\widehat{\mathcal{H}}_{p,\varphi}^{orlicz}(K)=\lim_{i\rightarrow \infty}\widehat{C}_{p,\varphi}(K,M_i)=\lim_{k\rightarrow \infty}\widehat{C}_{p,\varphi}(K, M_{i_k})=\widehat{C}_{p,\varphi}(K,\widehat{M}).$$
Thus, $\widehat{M}$ is a solution to problem (\ref{4.12}).

Let $\{\widehat{M}_i\}^\infty_{i=1}$ be a sequence of convex bodies such that
$$
\widehat{\mathcal{H}}_{p,\varphi}^{orlicz}(K_i)=\widehat{C}_{p,\varphi}(K_i,\widehat{M}_i)\ \ \text{and}\ \  C_p({\widehat{M}_i}^\circ)=C_p(B_2^n)\,\,\,\text{for any}\,\, i\geq 1.
$$
By Proposition \ref{Proposition3.1}, one has
 \begin{eqnarray}  \nonumber\widehat{\mathcal{H}}_{p,\varphi}^{orlicz}(K)&=&\widehat{C}_{p,\varphi}(K,\widehat{M})\\ \nonumber &=&\lim_{i\rightarrow \infty}\widehat{C}_{p,\varphi}(K_i,\widehat{M})\\  \nonumber
 &=&\limsup_{i\rightarrow \infty}\widehat{C}_{p,\varphi}(K_i,\widehat{M})\\
 &\geq& \limsup_{i\rightarrow \infty}\widehat{\mathcal{H}}_{p,\varphi}^{orlicz}(K_i). \label{continuous-1'}
 \end{eqnarray}
This, together with Proposition \ref{proposition3.3}, implies that $\{\widehat{M}_i\}^\infty_{i=1}$ is bounded. Let $\{K_{i_k}\}_{k=1}^\infty \subseteq\{K_i\}_{i=1}^\infty$ be a subsequence such that
$$\lim_{k \rightarrow \infty}\widehat{\mathcal{H}}_{p,\varphi}^{orlicz}(K_{i_k}) = \liminf_{i\rightarrow \infty}\widehat{\mathcal{H}}_{p,\varphi}^{orlicz}(K_i).$$
By the boundedness of $\{\widehat{M}_{i_k}\}^\infty_{k=1}$ and Lemma \ref{blaschke-selection}, together with (\ref{measure-36}) and $C_p(\widehat{M}^\circ_{i_k})=C_p(B_2^n)$ for any $k\geq1$, one can find a subsequence $\{\widehat{M}_{i_{k_j}}\}^\infty_{j=1}$ of $\{\widehat{M}_{i_k}\}^\infty_{k=1}$ and $\widehat{M}_0\in\mathcal{K}_0$ such that $\widehat{M}_{i_{k_j}}\rightarrow \widehat{M}_0$ as $j\rightarrow \infty $ and $C_p\big({\widehat{M}_0}^\circ\big)=C_p(B_2^n)$. Thus, by Proposition \ref{Proposition3.1} again, one gets
$$\liminf_{i\rightarrow \infty}\widehat{\mathcal{H}}_{p,\varphi}^{orlicz}(K_i)=\lim_{j \rightarrow \infty}\widehat{\mathcal{H}}_{p,\varphi}^{orlicz}(K_{i_{k_j}})= \lim_{j \rightarrow \infty}\widehat{C}_{p,\varphi}(K_{i_{k_j}},\widehat{M}_{i_{k_j}})=\widehat{C}_{p,\varphi}(K,\widehat{M}_0)\geq \widehat{\mathcal{H}}_{p,\varphi}^{orlicz}(K).$$
Combing this with (\ref{continuous-1'}), one has $\widehat{\mathcal{H}}_{p,\varphi}^{orlicz}(K_i)\rightarrow \widehat{\mathcal{H}}_{p,\varphi}^{orlicz}(K).$ 

The assertation (ii) follows along the same lines.
\end{proof}

The $p$-capacitary measure $\mu_p(K,\cdot)$ in problems (\ref{4.5}) and (\ref{4.6}) could be replaced by the measure $S_\mathcal{V}(K,\cdot)$ given by Definition \ref{definition 2.1}.  In fact,  Hong, Ye and Zhu in \cite{HYZ2017} proposed the following $L_\varphi$ Orlicz mixed $\mathcal{V}$-measure of $K, L\in \mathcal{K}_0$:
$$\mathcal{V}_\varphi(K,L)=\int_{S^{n-1}}\varphi \bigg(\frac{h_L(u)}{h_K(u)}\bigg) h_K(u)\,d S_\mathcal{V}(K,u),$$
where $\varphi\in\mathcal{I}\cup\mathcal{D}$.
For $\varphi\in\mathcal{I}\cup\mathcal{D},$ one can define $\widehat{\mathcal{V}}_\varphi(K,L)$ by
$$\int_{S^{n-1}}\varphi\bigg(\frac{\mathcal{V}(K)\cdot h_L(u)}{\widehat{\mathcal{V}}_\varphi(K,L)\cdot h_K(u)}\bigg)  h_K(u)\,d S_\mathcal{V} (K,u)=\int_{S^{n-1}} h_K(u)\,d S_\mathcal{V} (K,u).$$ The following theorem can be proved similar to the proof of Theorem \ref{Theorem existence of petty bodies}.
\begin{theorem}
Let $K \in \mathcal{K}_0$ and $\varphi \in \mathcal{I}.$ There exists a convex body $M\in\mathcal{K}_0$ such that $|M^\circ|=\omega_{n}$ and
$$\mathcal{V}_\varphi(K,M)=\inf \Big\{\mathcal{V}_\varphi(K,L):L \in \mathcal{K}_{0}\ \text{and}\ |L^\circ|=\omega_{n}\Big\}.$$
Moreover, if $\varphi\in\mathcal{I}$ is convex, then $M$ is unique.\label{Theorem9.1}

Similarly, there exists a convex body $\widehat{M}\in\mathcal{K}_0$ such that $|\widehat{M}^\circ|=\omega_{n}$ and
$$\widehat{\mathcal{V}}_\varphi(K,\widehat{M})=\inf \Big\{\widehat{\mathcal{V}}_\varphi(K,L):L \in \mathcal{K}_{0}\ \text{and}\ |L^\circ|=\omega_{n}\Big\}.$$
If $\varphi\in\mathcal{I}$ is convex, then $\widehat{M}$ is unique.
\end{theorem}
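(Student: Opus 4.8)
The plan is to follow the proof of Theorem~\ref{Theorem existence of petty bodies} essentially verbatim, with $\mu_p(K,\cdot)$, $C_p(K)$, $\mu^*_p(K,\cdot)$ and the constant $\frac{p-1}{n-p}$ replaced throughout by $S_\mathcal{V}(K,\cdot)$, $\mathcal{V}(K)$, the probability measure $h_K\,dS_\mathcal{V}(K,\cdot)\big/\int_{S^{n-1}}h_K\,dS_\mathcal{V}(K,u)$ and the constant $1$. First I would record the routine preliminaries. Since, for fixed $K,L\in\mathcal{K}_0$, the function $\eta\mapsto\int_{S^{n-1}}\varphi\bigl(\mathcal{V}(K)h_L(u)/(\eta h_K(u))\bigr)h_K(u)\,dS_\mathcal{V}(K,u)$ is continuous, strictly decreasing on $(0,\infty)$ (because $\varphi\in\mathcal{I}$ is strictly increasing), with range $(0,\infty)$, and since the right-hand side $\int_{S^{n-1}}h_K\,dS_\mathcal{V}(K,u)$ is a fixed positive finite constant (here one uses that $S_\mathcal{V}(K,\cdot)$ is a nonzero finite measure and that $h_K$ is bounded between two positive constants on $S^{n-1}$), the quantity $\widehat{\mathcal{V}}_\varphi(K,L)$ is well defined and positive; moreover, by uniqueness of the solution of the defining equation, $\widehat{\mathcal{V}}_\varphi(K,(tL)^\circ)=t^{-1}\widehat{\mathcal{V}}_\varphi(K,L^\circ)$ for $t>0$, which is the $\mathcal{V}$-analogue of Corollary~\ref{corollary3.1}. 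Finally $\mathcal{V}_\varphi(K,B_2^n)<\infty$ and $\widehat{\mathcal{V}}_\varphi(K,B_2^n)<\infty$, so both infima in the statement are finite.

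The key ingredient, and the step I expect to require the most care, is the $\mathcal{V}$-analogue of Proposition~\ref{proposition3.3}: if $\{M_i\}_{i=1}^\infty\subseteq\mathcal{K}_0$ is such that $\{\mathcal{V}_\varphi(K,M_i)\}_{i=1}^\infty$ or $\{\widehat{\mathcal{V}}_\varphi(K,M_i)\}_{i=1}^\infty$ is bounded, then $\{M_i\}_{i=1}^\infty$ is uniformly bounded. This is where property (v) of Definition~\ref{definition 2.1} enters decisively: because $S_\mathcal{V}(K,\cdot)$ is not concentrated on any hemisphere, for each $v\in S^{n-1}$ one has $\int_{S^{n-1}}\langle u,v\rangle_+\,dS_\mathcal{V}(K,u)>0$, hence (by the monotone convergence theorem) there is an integer $j_0$ with $\int_{\{u:\langle u,v\rangle_+\ge 1/j_0\}}\langle u,v\rangle_+\,dS_\mathcal{V}(K,u)>0$. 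Setting $R_i=\rho_{M_i}(u_i)=\max_{u\in S^{n-1}}\rho_{M_i}(u)$, passing to a subsequence with $u_i\to v$, and using $h_{M_i}(u)\ge R_i\langle u,u_i\rangle_+$ together with bounds $r_0\le h_K\le R_0$ on $S^{n-1}$, the monotonicity of $\varphi$ and Fatou's lemma, the assumption $R_i\to\infty$ would force, for every constant $C>0$, a lower bound of the form $c_0\,\varphi(C/(R_0 j_0))\int_{\{u:\langle u,v\rangle_+\ge 1/j_0\}}\langle u,v\rangle_+\,dS_\mathcal{V}(K,u)$ for the bounded quantity; letting $C\to\infty$ and using $\lim_{t\to\infty}\varphi(t)=\infty$ yields a contradiction. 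The homogeneous case is treated exactly as in Proposition~\ref{proposition3.3}, using the defining equation of $\widehat{\mathcal{V}}_\varphi$ and a bound $\widehat{\mathcal{V}}_\varphi(K,M_i)\le B$ in place of the explicit integral formula.

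With this in hand, the existence statements follow the template of Theorem~\ref{Theorem existence of petty bodies}: take a minimizing sequence $\{M_i\}$ with $|M_i^\circ|=\omega_n$, deduce uniform boundedness from the previous step, invoke Lemma~\ref{blaschke-selection} to extract $M_{i_k}\to M\in\mathcal{K}_0$ with $|M^\circ|=\omega_n$, and pass to the limit using the $\mathcal{V}$-analogue of Proposition~\ref{Proposition3.1}, i.e.\ the continuity of $\mathcal{V}_\varphi(K,\cdot)$ and $\widehat{\mathcal{V}}_\varphi(K,\cdot)$. That analogue is in fact easier than Proposition~\ref{Proposition3.1} because $K$ is fixed: for $\mathcal{V}_\varphi$ one notes that $h_{M_{i_k}}/h_K\to h_M/h_K$ uniformly on $S^{n-1}$ and stays in a fixed compact subinterval of $(0,\infty)$, hence $\varphi(h_{M_{i_k}}/h_K)h_K\to\varphi(h_M/h_K)h_K$ uniformly and the integrals converge since $S_\mathcal{V}(K,\cdot)$ is finite; for $\widehat{\mathcal{V}}_\varphi$ one first bounds $\widehat{\mathcal{V}}_\varphi(K,M_{i_k})$ away from $0$ and $\infty$ via the monotonicity of $\varphi$ and the inclusions $rB_2^n\subseteq M_{i_k},K\subseteq RB_2^n$, then pushes $\limsup$ and $\liminf$ through the defining equation exactly as in the proof of Proposition~\ref{Proposition3.1}.

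For uniqueness when $\varphi$ is convex I would argue as in Theorem~\ref{Theorem existence of petty bodies}. If $M_1,M_2$ are two minimizers with $|M_1^\circ|=|M_2^\circ|=\omega_n$, put $M_0=(M_1+M_2)/2$; by (\ref{measure-3}), (\ref{measure-4}) and the strict convexity of $t\mapsto t^{-n}$ one gets $\text{vrad}(M_0^\circ)\le 1$, with equality iff $M_1=M_2$. For $\mathcal{V}_\varphi$ the chain $\inf\le\mathcal{V}_\varphi(K,\text{vrad}(M_0^\circ)M_0)\le\mathcal{V}_\varphi(K,M_0)\le\frac12\mathcal{V}_\varphi(K,M_1)+\frac12\mathcal{V}_\varphi(K,M_2)=\inf$, where the first step uses $|(\text{vrad}(M_0^\circ)M_0)^\circ|=\omega_n$, the second the monotonicity of $\varphi$, and the third its convexity, forces $\text{vrad}(M_0^\circ)=1$ (since $\varphi$ is strictly increasing and $S_\mathcal{V}(K,\cdot)$ is nonzero), hence $M_1=M_2$. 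For $\widehat{\mathcal{V}}_\varphi$, applying the convexity of $\varphi$ inside the averaged defining equations of $\widehat{\mathcal{V}}_\varphi(K,M_1)=\widehat{\mathcal{V}}_\varphi(K,M_2)=\inf$ gives $\widehat{\mathcal{V}}_\varphi(K,M_0)\le\inf$, and then the homogeneity recorded above yields $\inf\le\widehat{\mathcal{V}}_\varphi(K,\text{vrad}(M_0^\circ)M_0)=\text{vrad}(M_0^\circ)\,\widehat{\mathcal{V}}_\varphi(K,M_0)\le\widehat{\mathcal{V}}_\varphi(K,M_0)\le\inf$, so again $\text{vrad}(M_0^\circ)=1$ and $M_1=M_2$.
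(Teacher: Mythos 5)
Your proposal is correct and follows exactly the route the paper intends: the paper proves this theorem only by remarking that it is "similar to the proof of Theorem \ref{Theorem existence of petty bodies}," and your write-up is precisely that adaptation, with the hemisphere condition on $S_\mathcal{V}(K,\cdot)$ from Definition \ref{definition 2.1}(v) supplying the coercivity step (the analogue of Proposition \ref{proposition3.3}) and the rest (Blaschke selection via Lemma \ref{blaschke-selection}, continuity, and the convexity/volume-radius uniqueness argument) carrying over verbatim.
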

Besides, results in Propositions \ref{proposition7.1} and \ref{proposition 4.4} as well as in Theorem \ref{Theorem 4.2} can be obtained for the case of variational functionals. We leave the details for readers.

\vskip 2mm \noindent {\bf Acknowledgments.} 
DY is supported by a NSERC
grant. BZ is
supported by AARMS, NSERC, NSFC (No.\ 11501185) and the Doctor
Starting Foundation of Hubei University for Nationalities (No.\
MY2014B001).  The authors are greatly indebted to the referee for many valuable comments which improve largely the quality of the paper. 

\vskip 5mm

\vskip 2mm \noindent Xiaokang Luo, \ \ \ {\small \tt xl3410@mun.ca}\\
{ \em Department of Mathematics and Statistics,
   Memorial University of Newfoundland,
   St.\ John's, Newfoundland, Canada A1C 5S7 }

\vskip 2mm \noindent Deping Ye, \ \ \ {\small \tt deping.ye@mun.ca}\\
{ \em Department of Mathematics and Statistics,
   Memorial University of Newfoundland,
   St.\ John's, Newfoundland, Canada A1C 5S7 }

\vskip 2mm \noindent Baocheng Zhu, \ \ \ {\small \tt zhubaocheng814@163.com}\\
{ \em 1. Department of Mathematics,
 Hubei University for Nationalities,
 Enshi, Hubei, China 445000}\\
{  \em 2.\  Department of Mathematics and Statistics,   Memorial University of Newfoundland,
   St.\ John's, Newfoundland, Canada A1C 5S7 }

\end{document}